\title{\bf{Cayley graph of the symmetric group with generating block transpositions}}
\author{Annachiara~Korchmaros
\\[\baselineskip]
\small{Dipartimento di Matematica e Informatica} \\
\small{Universit\`a di Perugia} \\
\small{06123 Perugia, Italy}}
\newtheorem{thm}{Theorem}[section]
\newtheorem{prop}[thm]{Proposition}
\newtheorem{lem}[thm]{Lemma}
\newtheorem{cor}[thm]{Corollary}
\newtheorem{conj}[thm]{Conjecture}
\theoremstyle{definition}
\newtheorem{rem}[thm]{Remark}
\def\Cay{{\rm{Cay(Sym_n}},T_n)}
\def\Cat{{\rm{\overline{Cay}(Sym_n}},T_n)}
\def\Sym{\mbox{\rm Sym}}
\newcommand{\cupdot}{\mathbin{\mathaccent\cdot\cup}}
\date{}
\begin{document}
\maketitle

\begin{abstract}
This paper deals with the Cayley graph $\Cay,$ where the generating set consists of all block transpositions. A motivation for the study of these particular Cayley graphs comes from current research in Bioinformatics. We prove that ${\rm{Aut}}(\Cay)$ is the product of the right translation group by $\textsf{N}\rtimes \textsf{D}_{n+1},$ where $\textsf{N}$ is the subgroup fixing $T_n$ element-wise and $\textsf{D}_{n+1}$ is a dihedral group of order $2(n+1)$. We conjecture that $\textsf{N}$ is trivial. We also prove that the subgraph $\Gamma$ with vertex-set $T_n$ is a $2(n-2)$-regular graph whose automorphism group is $\textsf{D}_{n+1}$. Furthermore, $\Gamma$ has as many as $n+1$ maximal cliques of size $2.$ Also, its subgraph $\Gamma(V)$ whose vertices are those in these cliques is a $3$-regular, Hamiltonian, and vertex-transitive graph.
\end{abstract}

\rm{\bf{Key words:}} Cayley graph, symmetric group, block transposition, automorphism.

\section{Introduction}\label{s1}
Block transpositions are well-known sorting operations with relevant applications in Bioinformatics; see \cite{FL}. They act on a string by removing a block of consecutive entries and inserting it somewhere else. In terms of the symmetric group $\rm{Sym_n}$ of degree $n$, the strings are identified with the permutations on $[n]=\{1,2,\ldots n\}$, and block transpositions are defined as follows. For any three integers $i,j,k$ with $0\le i <j<k \le n$, the block transposition $\sigma(i,j,k)$ with cut points $(i,j,k)$ turns the permutation  $\pi=[\pi_1\,\cdots\,\pi_n]$
into the permutation $\pi'=[\pi_1\cdots \pi_i\,\,\pi_{j+1}\cdots \pi_k\,\,\pi_{i+1}\cdots \pi_{j}\,\,\pi_{k+1}\cdots \pi_n]$. This action of $\sigma(i,j,k)$ on $\pi$ can also be expressed as the composition $\pi'=\pi\circ\sigma(i,j,k)$.
The set $T_n$ of all block transpositions has size $(n+1)n(n-1)/6$ and
is an inverse closed generating set of $\rm{Sym_n}$. The arising Cayley graph $\Cay$ is a very useful tool since ``sorting a permutation by block transpositions'' is equivalent to finding shortest paths between vertices in $\Cay$; see \cite{EE,FL,Mo}.

Although the definition of a block transposition arose from a practical need, $\Cay$ presents some features, and the most interesting is the existence of automorphisms other than right translations. In addition, the \emph{block transposition graph}, that is the subgraph $\Gamma$ of $\Cay$ with vertex-set $T_n$, has especially nice properties. As we show in this paper, $\Gamma$ is a $2(n-2)$-regular graph whose automorphism group is a dihedral group $\textsf{D}_{n+1}$ of order $2(n+1)$. The definition of $\textsf{D}_{n+1}$ arises from the toric equivalence in $\rm{Sym_n}$ and the reverse permutation. The group $\textsf{D}_{n+1}$ is also an automorphism group of $\Cay$. Therefore, the automorphism group ${\rm{Aut}}(\Cay)$ is the product of the right translation group $R(\Cay)$ by $\textsf{N}\rtimes \textsf{D}_{n+1},$ where $\textsf{N}$ is the subgroup fixing each block transposition. Computer aided exhaustive computation carried out for $n\leq 8$ supports our conjecture that $\textsf{N}$ is trivial, equivalently ${\rm{Aut}}(\Cay)=R(\Cay) \textsf{D}_{n+1}$. We also prove that $R(\Cay) \textsf{D}_{n+1}$ is isomorphic to the direct product of $\Sym_{n+1}$ by a group of order $2$. Furthermore, we show that $\Gamma$ has precisely $n+1$ maximal cliques of size $2$ and look inside the subgraph $\Gamma(V)$ of $\Gamma$ induced by the set $V$ whose vertices are the $2(n+1)$ vertices of these cliques. We prove that $\Gamma(V)$ is $3$-regular. We also observe that $\Gamma(V)$ is Hamiltonian and $\textsf{D}_{n+1}$ is an automorphism group of $\Gamma(V)$ acting transitively (and hence regularly) on $V$. This confirms the Lov\'asz conjecture for $\Gamma(V)$.

For basic facts on Cayley graphs and combinatorial properties of permutations the reader is referred to \cite{AB,B}.

\section{Background on block transpositions}
\label{defs}
Throughout the paper, $n$ denotes a positive integer. In our investigation cases $n\leq 3$ are trivial while case $n=4$ presents some results different from the general case.

For a set $X$ of size $n$, $\Sym_n$ stands for the set of all permutations on $X$. For the sake of simplicity, $[n]=\{1,2,\ldots,n\}$ is usually taken for $X$. As it is customary in the literature on block transpositions, we mostly adopt the functional notation for permutations:  If $\pi\in \rm{Sym}_n$, then $\pi=[\pi_1\pi_2\cdots \pi_n]$ with $\pi(t)=\pi_t$ for every $t\in[n]$, and if
$\pi,\rho\in {\rm{Sym}_n}$ then $\tau=\pi\circ\rho$ is the permutation defined by $\tau(t)=\pi(\rho(t))$ for every $t\in [n]$. The \emph{reverse permutation} is $\omega=[n\,n-1\cdots 1]$, and $\iota=[1\,2\cdots n]$ is the \emph{identity permutation}.

For any three integers, named \emph{cut points}, $(i,j,k)$ with $0\leq i< j< k\leq n$, the \emph{block transposition} (transposition; see \cite{GBH}) $\sigma(i,j,k)$ is defined to be the function on $[n]$:
\begin{equation}\label{feb9}
\sigma(i,j,k)_t =\left\{\begin{array}{ll}
t, &  1\leq t\leq i\quad k+1\leq t\leq n,\\
t+j-i, & i+1\leq t\leq k-j+i,\\
t+j-k, & k-j+i+1\leq t\leq k.
\end{array}
\right.
\end{equation}This shows that $\sigma(i,j,k)_{t+1}=\sigma(i,j,k)_{t} +1$ in the intervals:
\begin{equation}
\label{function}
[1,i],\quad[i+1, k-j+i],\quad[k-j+i+1,k],\quad[k+1, n],
\end{equation}where
\begin{align}
\label{cuppoints}
\sigma(i,j,k)_i&=i;&\sigma(i,j,k)_{i+1}&=j+1;&\sigma(i,j,k)_{k-j+i}&=k;\\
\sigma(i,j,k)_{k-j+i+1}&=i+1;&\sigma(i,j,k)_{k}&=j;&\sigma(i,j,k)_{k+1}&=k+1.\notag
\end{align}Actually, $\sigma(i,j,k)$ can also be represented as the permutation
\begin{equation}
\label{eq22ott12}
\sigma(i,j,k)=\left\{\begin{array}{ll}
[1\cdots i\,\, j+1\cdots k\,\, i+1\cdots j\,\, k+1 \cdots n], & 1\leq i,\,k< n,\\
{[j+1\cdots k\,\, 1\cdots j\,\, k+1 \cdots n]}, & i=0,\,k< n,\\
{[1\cdots i\,\,j+1\cdots n\,\, i+1\cdots j]}, & 1\leq i,\,k=n,\\
{[j+1\cdots n\,\, 1\cdots j]}, & i=0,\,k=n
\end{array}
\right.
\end{equation}such that the action of $\sigma(i,j,k)$ on $\pi$ is defined as the product
$$\pi\circ\sigma(i,j,k)=[\pi_1\cdots \pi_i\,\,\pi_{j+1}\cdots \pi_k\,\,\pi_{i+1}\cdots \pi_{j}\,\,\pi_{k+1}\cdots \pi_n].$$ Therefore, applying a block transposition on the right of $\pi$ consists in switching two adjacent subsequences of $\pi$, namely \emph{blocks}, without changing the order of the integers within each block. This may also be expressed by $$[\pi_1\cdots\pi_i|\pi_{i+1}\cdots\pi_j|\pi_{j+1}\cdots\pi_k|\pi_{k+1}\cdots\pi_n].$$

{}From now on, $T_n$ denotes the set of all block transpositions on $[n]$. The size of $T_n$ is equal to $n(n+1)(n-1)/6$. Obviously, $T_n$ is not a subgroup of $\rm{Sym_n}$. Nevertheless,
$T_n$ is power and inverse closed. For any cut points $(i,j,k)$,
\begin{equation}
\label{eqa18oct}
\sigma(i,j,k)^{-1}=\sigma(i,k-j+i,k),\qquad\sigma(i,i+1,k)^{j-i}=\sigma(i,j,k).
\end{equation}Also, for any two integers $i,\,k$ with $0\le i<k\le n$ the subgroup generated by $\sigma(i,i+1,k)$ consists of all $\sigma(i,j,k)$ together with the identity. In particular, $\sigma(0,1,n)$ generates a subgroup of order $n$ that often appears in our arguments. Throughout the paper, $\beta=\sigma(0,1,n)$ and $B$ denotes the set of nontrivial elements of the subgroup generated by $\beta$.

We introduce some subsets in $T_n$ that plays a relevant role in our study. Every permutation $\bar{\pi}$ on $[n-1]$ extends to a permutation $\pi$ on $[n]$ such that $\pi_t=\bar{\pi}_t$ for $1\le t \le n-1$ and $\pi_n=n$. Hence,
$S_{n-1}$ is naturally embedded in $T_n$ since every $\sigma(i,j,k)\in T_n$ with $k\neq n$ is identified with the block transposition $\bar\sigma(i,j,k)$. On the other side, every permutation $\pi'$ on $\{2,3,\ldots,n\}$ extends to a permutation on $[n]$ such that $\pi_t=\pi'_t,$ for $2\le t \le n$ and $\pi_1=1$. Thus, $\sigma(i,j,k)\in T_n$ with $i\neq 0$ is identified with the block transposition $\sigma'(i,j,k)$. The latter block transpositions form the set
$$ S_{n-1}^\triangledown=\{ \sigma(i,j,k)|\,i\neq 0\}.$$ Also, $$S_{n-2}^\vartriangle=S_{n-1}\cap S_{n-1}^\triangledown$$ is the set of all block transpositions on the set $\{2,3,\ldots,n-1\}$.
Our discussion leads to the following results.
\begin{lem}[Partition lemma] Let $L=S_{n-1}\setminus S_{n-2}^\vartriangle$ and let $F=S_{n-1}^\triangledown\setminus S_{n-2}^\vartriangle$. Then
\label{lem1oct9}
$$T_n=B\cupdot L \cupdot F \cupdot S_{n-2}^\vartriangle.$$
\end{lem}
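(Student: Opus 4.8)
The plan is to reduce the statement to an elementary partition of the set of admissible cut points. First I would record that the assignment $(i,j,k)\mapsto\sigma(i,j,k)$, defined for all triples with $0\le i<j<k\le n$, is a bijection onto $T_n$: the number of such triples is $\binom{n+1}{3}=(n+1)n(n-1)/6$, which is exactly the stated cardinality of $T_n$, so the parametrization is injective. This injectivity is what makes the four pieces genuinely disjoint rather than merely indexed by disjoint sets, so I would state it explicitly at the outset and use it at the end.

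Next I would translate each of the four sets into a condition on the cut points. Using the description of the cyclic subgroup generated by $\sigma(i,i+1,k)$ recorded before the lemma, specialized to $i=0$ and $k=n$, the subgroup $\langle\beta\rangle$ consists of $\iota$ together with all $\sigma(0,j,n)$ for $1\le j\le n-1$; hence $B$ is exactly the set of block transpositions with $i=0$ and $k=n$. From their embedding descriptions, $S_{n-1}$ is the set of $\sigma(i,j,k)$ with $k\neq n$ and $S_{n-1}^\triangledown$ is the set with $i\neq 0$, so $S_{n-2}^\vartriangle=S_{n-1}\cap S_{n-1}^\triangledown$ is the set with $i\neq 0$ and $k\neq n$. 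Taking the two set differences then identifies $L=S_{n-1}\setminus S_{n-2}^\vartriangle$ with the block transpositions having $i=0$ and $k\neq n$, and $F=S_{n-1}^\triangledown\setminus S_{n-2}^\vartriangle$ with those having $i\neq 0$ and $k=n$.

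Finally I would observe that the two binary conditions, namely whether $i=0$ and whether $k=n$, split the set of all admissible cut points into four mutually exclusive and exhaustive classes, matching $B$, $L$, $F$, and $S_{n-2}^\vartriangle$ respectively. Transporting this partition of the index set through the bijection of the first step yields the desired disjoint decomposition $T_n=B\cupdot L\cupdot F\cupdot S_{n-2}^\vartriangle$.

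I expect the argument to be essentially bookkeeping, with no serious obstacle. The only point that genuinely needs care is the disjointness of the four pieces, which rests on the injectivity of the cut-point parametrization: without it a single element of $T_n$ might arise from two distinct triples lying in different classes, and the cardinality count is precisely what rules this out.
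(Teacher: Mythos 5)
Your proof is correct and takes essentially the same route as the paper, which presents the lemma as an immediate consequence of the preceding discussion: $B$, $L$, $F$, and $S_{n-2}^\vartriangle$ are identified with the four classes of cut points determined by whether $i=0$ and whether $k=n$, and these classes manifestly partition $T_n$. Your only addition is making explicit the injectivity of $(i,j,k)\mapsto\sigma(i,j,k)$ via the cardinality count, a point the paper leaves implicit.
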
With the above notation, $L$ is the set of all $\sigma(0,j,k)$ with $k\neq n$, and $F$ is the set of all $\sigma(i,j,n)$ with $i\neq0$. Furthermore, $|B|=n-1,\,|L|=|F|=(n-1)(n-2)/2,$ and $|S_{n-2}^\vartriangle|=(n-1)(n-2)(n-3)/6.$

\section{Toric equivalence in the symmetric group}
The definition of toric (equivalence) classes in $\rm{Sym_n}$ requires to consider permutations on $[n]^0=\{0,1,\ldots,n\}$ and recover the permutations $\pi=[\pi_1\,\cdots\pi_n]$ on $[n]$ in the form $[0\,\pi]$, where $[0\,\pi]$ stands for the permutation $[\pi_0\,\pi_1\,\cdots \pi_n]$ on $[n]^0$ with $\pi_0=0$. Let $$\alpha=[1\,2\,\ldots\, n\,0].$$For any integer $r$ with $0\le r \le n$,
\begin{equation}\label{alpha_powers}
\alpha^r_x\equiv x+r {\pmod{n+1}},\qquad 0\leq x\leq n.
\end{equation}This gives rise to \emph{toric maps} $\textsf{f}_r$ on $\rm{Sym_n}$ with $0\le r\le n,$ defined by
\begin{equation}\label{eq2oct9}
\textsf{f}_r(\pi)=\rho\Longleftrightarrow[0\rho]= \alpha^{n+1-\pi_r}\circ [0\,\pi] \circ \alpha^r.
\end{equation}
The \emph{toric class} of $\pi$ is
\begin{equation}\label{eq1oct10}
\textsf{F}(\pi)=\{\textsf{f}_r(\pi)|r=0,1,\ldots,n\}.
\end{equation}
Since
\begin{equation}
\label{eq9oct}
(\textsf{f}_r(\pi))_t=\pi_{r+t}-\pi_r,\qquad t\in [n],
\end{equation}where the indices are taken mod$(n+1)$, (\ref{eq1oct10}) formalizes the intuitive definition of toric classes introduced in \cite{EE} by Eriksson and his coworkers.

In general, the toric class of $\pi$ comprises $n+1$ permutations, but it may consist of a smaller number of permutations and can even collapse to a unique permutation. The latter case occurs when $\pi$ is the identity permutation or the reverse permutation. The number of elements in a toric class is always a divisor of $n+1$, and there are exactly $\varphi(n+1)$ classes that have only one element, where $\varphi$ is the Euler function; see \cite{C}.

{}From (\ref{eq2oct9}), $\textsf{f}_s\circ \textsf{f}_r=\textsf{f}_{s+r}$, where the indices are taken mod$(n+1)$. In fact, $\textsf{f}_s\circ\textsf{f}_r(\pi)=\textsf{f}_s(\varphi)$ with $[0\,\varphi]=\alpha^{n+1-\pi_r}\circ[0\,\pi]\circ\alpha^r$. Also, $\textsf{f}_s\circ\textsf{f}_r(\pi)=\mu$ with
$$\begin{array}{lll}
[0\,\mu]&=& \alpha^{n+1-\varphi_s}\circ [0\,\varphi] \circ \alpha^s\\
{}&=&\alpha^{n+1-\pi_{r+s}}\circ [0\,\pi] \circ\alpha^{r+s},
\end{array}$$where $\varphi_s=\pi_{r+s}-\pi_r$. Hence, $\textsf{f}_r=\textsf{f}^{\,r}$ with $\textsf{f}=\textsf{f}_1$, and the set
$$\textsf{F}=\{\textsf{f}_r|r=0,1,\ldots, n\}$$ is a cyclic group of order $n+1$ generated by \textsf{f}.

If $\pi\in \rm{Sym_n}$ and $0\le r \le n$, then
\begin{equation}
\label{lem3oct11}
\textsf{f}^{-1}_{r}(\pi)=\textsf{f}_{\pi_r}(\pi^{-1}).
\end{equation}
In particular, $\textsf{f}^{-1}_{r}(\pi)=\textsf{f}_{r}(\pi^{-1})$ provided that $\pi_r=r$.

The \emph{reverse map} $\textsf{g}$ on $\rm{Sym_n}$ is defined by
\begin{equation}
\label{eq2oct9b}
\textsf{g}(\pi)=\rho\Longleftrightarrow[0\,\rho]=[0\,w]\circ [0\,\pi] \circ [0\,w].
\end{equation}
$\textsf{g}$ is an involution, and
\begin{equation}
\label{eq9octb}
(\textsf{g}(\pi))_t=n+1-\pi_{n+1-t}\qquad\mbox{ for every }t\in [n].
\end{equation}
Also, for every integer $r$ with $0\le r\le n$,
\begin{equation}
\label{eqoct15a}
\textsf{g}\circ\textsf{f}_r \circ \textsf{g}=\textsf{f}_{n+1-r}.
\end{equation}In fact, $\textsf{g}\circ\textsf{f}_r \circ \textsf{g}(\pi)=\textsf{g}\circ\textsf{f}_r(\rho)=\textsf{g}(\rho')$ with
$[0\,\rho]=[0\,w]\circ[0\,\pi]\circ[0\,w]$, and $[0\,\rho']=\alpha^{n+1-\rho_r}\circ [0\,\rho] \circ \alpha^r$. Also, $\textsf{g}\circ\textsf{f}_r \circ \textsf{g}(\pi)=\mu$ with
$$\begin{array}{lll}
[0\,\mu]&=& [0\,w]\circ [0\,\rho'] \circ [0\,w]\\
{}&=&[0\, w]\circ\alpha^{n+1-\rho_{r}}\circ [0\, w]\circ [0\,\pi]\circ [0\, w] \circ\alpha^{r}\circ[0\, w]\\
{}&=&\alpha^{n+1-\pi_{n+1-r}}\circ [0\,\pi]\circ\alpha^{n+1-r}
\end{array}$$since $[0\,w]\circ\alpha^r \circ [0\,w]=\alpha^{n+1-r}$.

Now, we show that the toric maps and reverse map take any block transposition into a block transposition, as corollary of the following lemma.
\begin{lem}
\label{lemc19ott2013}
Let $\sigma(i,j,k)$ be any block transposition on $[n]$. Then
\begin{equation}
\label{22march2015A}
\emph{\textsf{f}}(\sigma(i,j,k))=\left\{\begin{array}{ll}
\sigma(i-1,j-1,k-1), & i >0 ,\\
\sigma(k-j-1,n-j,n), & i=0.
\end{array}
\right.
\end{equation}and
\begin{equation}\label{eq1oct11}
\emph{\textsf{g}}(\sigma(i,j,k))=\sigma(n-k,n-j,n-i)
\end{equation}
\end{lem}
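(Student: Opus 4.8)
The plan is to prove both formulas by a direct computation on the defining action, using the index formula~(\ref{eq9oct}) for the toric map and~(\ref{eq9octb}) for the reverse map, together with the explicit values of a block transposition at the cut points recorded in~(\ref{cuppoints}). The key observation is that a block transposition is completely determined by its cut points, so to identify $\textsf{f}(\sigma(i,j,k))$ or $\textsf{g}(\sigma(i,j,k))$ as a block transposition it suffices to verify that the resulting permutation increases by $1$ on each of the four intervals in~(\ref{function}) and then read off where the ``jumps'' occur. Equivalently, I would show the candidate permutation on the right-hand side agrees with the image term by term via the index formulas.

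For the reverse-map formula~(\ref{eq1oct11}) I would start, since it is the cleaner of the two and involves no case split. Setting $\rho=\textsf{g}(\sigma(i,j,k))$, formula~(\ref{eq9octb}) gives $\rho_t=n+1-\sigma(i,j,k)_{n+1-t}$ for all $t\in[n]$. The natural route is to substitute the three-line description~(\ref{feb9}) of $\sigma(i,j,k)$ evaluated at $n+1-t$ and simplify each branch; I expect the interval $i+1\le s\le k-j+i$ (in the variable $s=n+1-t$) to map to the interval governing the middle block of $\sigma(n-k,n-j,n-i)$, and similarly for the other two branches. A slightly slicker alternative that I would mention is to use the conjugation identity~(\ref{eqoct15a}), $\textsf{g}\circ\textsf{f}_r\circ\textsf{g}=\textsf{f}_{n+1-r}$, to transfer information between the two formulas once one of them is established; but the term-by-term check from~(\ref{feb9}) is the most transparent and I would carry it out explicitly.

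For the toric-map formula~(\ref{22march2015A}) the case $i>0$ is immediate from~(\ref{eq9oct}) with $r=1$: since $\sigma(i,j,k)_1=1$ when $i>0$ (the block transposition fixes the initial segment $[1,i]$ pointwise, and $i\ge 1$), we get $(\textsf{f}(\sigma))_t=\sigma_{t+1}-1$, and shifting every cut point down by one turns $\sigma(i,j,k)$ into $\sigma(i-1,j-1,k-1)$. The only genuine obstacle is the boundary case $i=0$. Here $\sigma(0,j,k)_1=j+1\neq 1$, so the normalization by $\pi_r=\pi_1$ in~(\ref{eq9oct}) no longer cancels cleanly, and the cyclic index wraparound ``mod $(n+1)$'' becomes active. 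I would handle this by carefully tracking the indices modulo $n+1$: compute $(\textsf{f}(\sigma(0,j,k)))_t=\sigma(0,j,k)_{1+t}-\sigma(0,j,k)_1$ with indices read mod $n+1$, paying particular attention to the index $t=n$, where $1+t\equiv 0$ and one must use $\sigma(0,j,k)_0=0$ from the convention $[0\,\pi]$. Verifying that the result has its jumps at the positions $k-j-1$, $n-j$, and $n$ — that is, that it equals $\sigma(k-j-1,n-j,n)$ — is the delicate bookkeeping step, and I would check it against the cut-point values in~(\ref{cuppoints}) for the target permutation rather than trusting the interval description alone.

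Finally, I would record that these two formulas are exactly what is needed for the announced corollary that $\textsf{F}$ and $\textsf{g}$ (and hence the group they generate) stabilize the set $T_n$ of block transpositions setwise; since $\textsf{f}$ generates the cyclic group $\textsf{F}$, iterating~(\ref{22march2015A}) shows every $\textsf{f}_r$ preserves $T_n$, and~(\ref{eq1oct11}) does the same for $\textsf{g}$.
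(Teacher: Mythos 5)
Your proposal is correct and follows essentially the same route as the paper: a direct computation split into the cases $i>0$ and $i=0$, where the paper conjugates $[0\,\sigma]$ by the appropriate powers of $\alpha$ via (\ref{eq2oct9}) and reads off the intervals from (\ref{feb9}), while you use the equivalent pointwise formula (\ref{eq9oct}) --- the same arithmetic in different notation. For the reverse map the paper simply cites the word form (\ref{eq22ott12}), which amounts to the same term-by-term check you describe via (\ref{eq9octb}).
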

\begin{proof}
For $i>0$, let $\sigma=\sigma(i,j,k)$. Then (\ref{eq2oct9}) reads
$$
\textsf{f}(\sigma)=\rho\Longleftrightarrow[0\rho]= \alpha^{n}\circ [0\,\sigma] \circ \alpha,
$$as $\sigma_1=1$ from (\ref{eq22ott12}). Since $\alpha_t=t+1\pmod{n+1}$ for every $t\in [n]^0$, then, by (\ref{feb9}),
\begin{equation}\label{feb91}
\sigma\circ\alpha_t =\left\{\begin{array}{ll}
t+1, &  0\leq t+1\leq i\quad k+1\leq t\leq n,\\
t+j-i+1, & i+1\leq t+1\leq k-j+i,\\
t+j-k, & k-j+i+1\leq t+1\leq k.\\
\end{array}
\right.
\end{equation}In addition, $\alpha^n_t=\alpha^{-1}_t=t-1\pmod{n+1}$ for every $t\in [n]^0$. Thus, (\ref{feb91}) yields
$$
\alpha^n\circ\sigma\circ\alpha_t =\left\{\begin{array}{ll}
t, &  0\leq t\leq i-1\quad k\leq t\leq n,\\
t+j-i, & i\leq t\leq k-j+i-1,\\
t+j-k-1, & k-j+i\leq t\leq k-1,\\
\end{array}
\right.
$$hence the statement holds true for $i>0$, by (\ref{feb9}). Now, suppose $i=0$. Then, from (\ref{eq22ott12}) follows $\sigma_1=j+1$, and (\ref{eq2oct9}) reads
$$
\textsf{f}(\sigma(0,j,k))=\rho\Longleftrightarrow[0\rho]= \alpha^{n-j}\circ [0\,\sigma(0,j,k)] \circ \alpha.
$$Replacing $i$ with $0$ in (\ref{feb91}),
$$\alpha^{n-j}\circ\sigma(0,j,k)\circ\alpha_t =\left\{\begin{array}{ll}
t, &  0\leq t\leq k-j-1,\\
t-k, & k-j\leq t\leq k-1,\\
t-j, & k\leq t\leq n.\\
\end{array}
\right.$$Therefore, the assertion for $\textsf{f}$ follows from (\ref{feb9}).

For the reverse map, (\ref{eq22ott12}) yields (\ref{eq1oct11}).
\end{proof}
Since $T_n$ is a power closed subset of $\Sym_n$ and toric maps are powers of $\textsf{f}$, Lemma \ref{lemc19ott2013} has the following corollary.
\begin{cor}
\label{th1}
$T_n$ is invariant under the action of toric maps and the reverse map.
\end{cor}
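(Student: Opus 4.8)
The plan is to read off the corollary directly from the explicit formulas in Lemma \ref{lemc19ott2013}, together with the structural facts about the toric maps established just before it. The first step is to reduce the claim about \emph{all} toric maps to the single map $\textsf{f}$. Since $\textsf{F}=\{\textsf{f}_r\mid r=0,1,\ldots,n\}$ is the cyclic group generated by $\textsf{f}=\textsf{f}_1$, every toric map is a composition power $\textsf{f}^{\,r}$. Hence, once we know $\textsf{f}(T_n)\subseteq T_n$, iterating yields $\textsf{f}^{\,r}(T_n)\subseteq T_n$ for every $r$, and it suffices to prove invariance under the two generators $\textsf{f}$ and $\textsf{g}$.

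For $\textsf{f}$, formula (\ref{22march2015A}) already exhibits $\textsf{f}(\sigma(i,j,k))$ as a triple $\sigma(\cdot,\cdot,\cdot)$; the only point to check is that this triple is a legitimate set of cut points, i.e.\ that its three entries are strictly increasing and lie in $[0,n]$. In the case $i>0$ the image is $\sigma(i-1,j-1,k-1)$, and the inequalities $0\le i-1<j-1<k-1\le n$ follow at once from $0<i<j<k\le n$. In the case $i=0$ the image is $\sigma(k-j-1,n-j,n)$, and here $k-j-1\ge 0$ because $k>j$, while $k-j-1<n-j$ because $k\le n$, and $n-j<n$ because $j>0$. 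Thus $\textsf{f}(T_n)\subseteq T_n$. The reverse map is handled identically using (\ref{eq1oct11}): the triple $\sigma(n-k,n-j,n-i)$ satisfies $0\le n-k<n-j<n-i\le n$ since $0\le i<j<k\le n$, so $\textsf{g}(T_n)\subseteq T_n$.

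Finally, these inclusions are promoted to equalities by a counting argument: $T_n$ is finite and both $\textsf{f}$ and $\textsf{g}$ are bijections of $\Sym_n$ (indeed $\textsf{f}$ generates the cyclic group $\textsf{F}$, and $\textsf{g}$ is an involution), so an injective self-map of the finite set $T_n$ is onto. Hence $\textsf{f}(T_n)=T_n$ and $\textsf{g}(T_n)=T_n$, and therefore every toric map $\textsf{f}^{\,r}$ and the reverse map leave $T_n$ invariant. There is essentially no hard step here; the whole substance sits in Lemma \ref{lemc19ott2013}, and the only thing requiring care is the verification of the strict cut-point inequalities, which guarantees that the right-hand sides of (\ref{22march2015A}) and (\ref{eq1oct11}) are bona fide block transpositions rather than degenerate triples.
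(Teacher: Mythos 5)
Your proof is correct and takes essentially the same route as the paper, which likewise deduces the corollary from Lemma \ref{lemc19ott2013} together with the observation that every toric map is a power of $\textsf{f}$. Your two added checks---that the image triples in (\ref{22march2015A}) and (\ref{eq1oct11}) satisfy the strict cut-point inequalities, and that the inclusions $\textsf{f}(T_n)\subseteq T_n$, $\textsf{g}(T_n)\subseteq T_n$ upgrade to equalities by injectivity on a finite set---are details the paper leaves implicit.
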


\section{Structure of Cayley graphs on symmetric groups whose connection set consists of all block transpositions}
\label{cayleyleft}
Since $T_n$ is an inverse closed generator set of $\rm{Sym_n}$ which does not contain the identity permutation $\iota$, the (left-invariant) Cayley graph $\Cay$ is an undirected connected simple graph, where $\{\pi,\rho\}$ is an edge if and only if $\rho=\sigma(i,j,k)\circ \pi,$ for some $\sigma(i,j,k)\in T_n$.

By a result of Cayley, every $h\in \rm{Sym_n}$ defines a \emph{right translation} $\textsf{h}$ which is the automorphism  of $\Cay$ that takes the vertex $\pi$ to the vertex $\pi\circ h$, and hence the edge $\{\pi,\rho\}$ to the edge $\{\pi\circ h,\rho\circ h\}$. These automorphisms form the \emph{right translation group} $R(\Cay)$ of $\Cay$. Clearly, ${\Sym}_n\cong R(\Cay)$. Furthermore,
since $R(\Cay)$ acts regularly on ${\Sym}_n$, every automorphism of $\Cay$ is the product of a right translation by an automorphism fixing $\iota$.

One may ask if there is a nontrivial automorphism of $\Cay$ fixing $\iota$. The answer is affirmative by the following results.
\begin{lem}\label{mat27oct}
For every $\pi,\rho \in {\Sym}_n,$
\begin{itemize}
\item[\rm(i)] $\emph{\textsf{f}}_r(\rho\circ\pi)=\emph{\textsf{f}}_{\pi_r}(\rho)\circ\emph{\textsf{f}}_r(\pi);$
\item[\rm(ii)] $\emph{\textsf{g}}(\rho\circ\pi)=\emph{\textsf{g}}(\rho)\circ\emph{\textsf{g}}(\pi).$
\end{itemize}
\end{lem}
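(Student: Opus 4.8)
The plan is to carry out both computations inside $\Sym_{n+1}$, viewed as the group of permutations of $[n]^0$, using the embedding $\pi\mapsto[0\,\pi]$ together with the defining relations (\ref{eq2oct9}) and (\ref{eq2oct9b}). The starting observation is that this embedding is a group homomorphism: since $\pi_0=\rho_0=0$, evaluating $[0\,\rho]\circ[0\,\pi]$ at $0$ gives $0$, and at $t\in[n]$ gives $\rho_{\pi_t}=(\rho\circ\pi)_t$, whence
\begin{equation*}
[0,\rho\circ\pi]=[0\,\rho]\circ[0\,\pi].
\end{equation*}
Once this product rule is available, both identities reduce to the order relations $\alpha^{n+1}=\mathrm{id}$ (immediate from (\ref{alpha_powers})) and $[0\,\omega]^2=\mathrm{id}$, i.e.\ that $[0\,\omega]$ is an involution.

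For (i) I would start from (\ref{eq2oct9}) applied to $\rho\circ\pi$, replace $[0,\rho\circ\pi]$ by $[0\,\rho]\circ[0\,\pi]$, and use $(\rho\circ\pi)_r=\rho_{\pi_r}$ to get
\begin{equation*}
[0,\textsf{f}_r(\rho\circ\pi)]=\alpha^{\,n+1-\rho_{\pi_r}}\circ[0\,\rho]\circ[0\,\pi]\circ\alpha^{r}.
\end{equation*}
On the other side, writing $[0,\textsf{f}_{\pi_r}(\rho)]$ and $[0,\textsf{f}_r(\pi)]$ out from (\ref{eq2oct9}) and composing them (again by the product rule) produces a middle factor $\alpha^{\pi_r}\circ\alpha^{\,n+1-\pi_r}$, which collapses to the identity because $\alpha$ has order $n+1$; what survives is exactly the previous display, giving $\textsf{f}_r(\rho\circ\pi)=\textsf{f}_{\pi_r}(\rho)\circ\textsf{f}_r(\pi)$. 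The one point needing care is the bookkeeping of the exponents of $\alpha$: the outer toric map must be taken with index $\pi_r$ rather than $r$, precisely so that these two middle powers of $\alpha$ are mutually inverse. This is the conceptual crux — the ``twist'' in the index is what makes $\textsf{f}_r$ fail to be a plain homomorphism while the stated relation still holds.

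Part (ii) is entirely analogous but simpler, since the conjugator $[0\,\omega]$ in (\ref{eq2oct9b}) does not depend on $\pi$. From (\ref{eq2oct9b}) and the product rule,
\begin{equation*}
[0,\textsf{g}(\rho\circ\pi)]=[0\,\omega]\circ[0\,\rho]\circ[0\,\pi]\circ[0\,\omega],
\end{equation*}
whereas composing $[0,\textsf{g}(\rho)]$ with $[0,\textsf{g}(\pi)]$ inserts a factor $[0\,\omega]\circ[0\,\omega]$ between the two, which vanishes because $[0\,\omega]$ is an involution; the two expressions then coincide.

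Should one prefer an entrywise argument, both parts follow just as quickly from the closed forms (\ref{eq9oct}) and (\ref{eq9octb}): for (i) one checks that each side sends $t$ to $\rho_{\pi_{r+t}}-\rho_{\pi_r}$, and for (ii) that each side sends $t$ to $n+1-\rho_{\pi_{n+1-t}}$ (indices read mod $n+1$, with $\pi_0=0$). I expect the only mild subtlety, in either approach, to be keeping all indices and values reduced in the correct residue ranges modulo $n+1$, rather than any genuine difficulty; the structural content is simply that $\pi\mapsto[0\,\pi]$ is a homomorphism while $\alpha$ and $[0\,\omega]$ have the stated orders.
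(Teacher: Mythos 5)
Your proposal is correct and takes essentially the same route as the paper: the paper's proof also works in the symmetric group on $[n]^0$ via the embedding $\pi\mapsto[0\,\pi]$, uses $(\rho\circ\pi)_r=\rho_{\pi_r}$, and inserts the identity factor $\alpha^{\pi_r}\circ\alpha^{n+1-\pi_r}$ between $[0\,\rho]$ and $[0\,\pi]$ in $[0\,\textsf{f}_r(\rho\circ\pi)]$ --- exactly the cancellation you perform when composing the two right-hand sides --- and handles (ii) analogously with $[0\,\omega]$ an involution. Your explicit verification that $\pi\mapsto[0\,\pi]$ is a homomorphism, and the entrywise alternative via (\ref{eq9oct}) and (\ref{eq9octb}), are correct additions but do not change the substance of the argument.
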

\begin{proof}
(i) From (\ref{eq2oct9}), $\textsf{f}_r(\rho\circ\pi)=\mu$ with
$$\begin{array}{lll}
[0\,\mu]&=& \alpha^{n+1-(\rho\circ\pi)_r}\circ [0\,\rho] \circ [0\,\pi]\circ \alpha^r\\
{}&=&\alpha^{n+1-(\rho\circ\pi)_r}\circ [0\,\rho]\circ\alpha^{\pi_r} \circ\alpha^{n+1-\pi_r} [0\,\pi]\circ \alpha^r.
\end{array}$$Now, the first assertion follows from (\ref{eq2oct9}).

(ii) A similar argument depending on (\ref{eq2oct9b}) shows that the second assertion holds true for $\textsf{g}$.
\end{proof}

\begin{prop}
\label{prop10oct} Toric maps and the reverse map are automorphisms of $\Cay$.
\end{prop}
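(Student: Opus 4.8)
The plan is to verify, for each toric map $\textsf{f}_r$ and for the reverse map $\textsf{g}$, the two defining properties of a graph automorphism of $\Cay$: that it permutes the vertex-set $\Sym_n$ bijectively, and that it preserves adjacency in both directions.

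Bijectivity is immediate from the group structure already established. The toric maps form the cyclic group $\textsf{F}=\langle\textsf{f}\rangle$ of order $n+1$, so each $\textsf{f}_r=\textsf{f}^r$ is invertible with $\textsf{f}_r^{-1}=\textsf{f}_{n+1-r}\in\textsf{F}$; and $\textsf{g}$ is an involution, hence its own inverse. Thus only adjacency remains to be checked.

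For the adjacency step I would start from an arbitrary edge $\{\pi,\rho\}$, which by definition of $\Cay$ means $\rho=\sigma\circ\pi$ for some $\sigma=\sigma(i,j,k)\in T_n$. Applying Lemma~\ref{mat27oct}(i) gives $\textsf{f}_r(\rho)=\textsf{f}_r(\sigma\circ\pi)=\textsf{f}_{\pi_r}(\sigma)\circ\textsf{f}_r(\pi)$. The crucial point is that the left factor $\textsf{f}_{\pi_r}(\sigma)$ is again a block transposition: this is precisely Corollary~\ref{th1}, which asserts that $T_n$ is invariant under every toric map, here the one of index $\pi_r$. Hence $\textsf{f}_r(\rho)$ is obtained from $\textsf{f}_r(\pi)$ by left multiplication by an element of $T_n$, so $\{\textsf{f}_r(\pi),\textsf{f}_r(\rho)\}$ is an edge. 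The identical argument using Lemma~\ref{mat27oct}(ii) and the invariance of $T_n$ under $\textsf{g}$ (again Corollary~\ref{th1}) shows $\textsf{g}$ sends edges to edges, since $\textsf{g}(\rho)=\textsf{g}(\sigma)\circ\textsf{g}(\pi)$ with $\textsf{g}(\sigma)\in T_n$.

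Finally, to upgrade ``sends edges to edges'' to ``preserves adjacency,'' I would invoke that each map has an inverse of the same type: $\textsf{f}_r^{-1}=\textsf{f}_{n+1-r}$ is again a toric map and $\textsf{g}^{-1}=\textsf{g}$, so the inverse also sends edges to edges; a bijection whose inverse is edge-preserving necessarily reflects non-edges to non-edges and is therefore a graph automorphism. I expect the only genuinely delicate point to be the index bookkeeping in the adjacency step, namely recognizing that the factor produced by Lemma~\ref{mat27oct}(i) is $\textsf{f}_{\pi_r}(\sigma)$ rather than $\textsf{f}_r(\sigma)$, which is why one must appeal to the invariance of $T_n$ under the whole group $\textsf{F}$ (Corollary~\ref{th1}) and not merely under the particular map $\textsf{f}_r$ being tested.
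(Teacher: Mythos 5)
Your proof is correct and follows essentially the same route as the paper: decompose an edge as $\rho=\sigma\circ\pi$, apply Lemma~\ref{mat27oct} to get $\textsf{f}_r(\rho)=\textsf{f}_{\pi_r}(\sigma)\circ\textsf{f}_r(\pi)$, and invoke Corollary~\ref{th1} to see that the left factor stays in $T_n$ (and likewise for $\textsf{g}$). The only cosmetic difference is that the paper treats $r=1$ and then inducts on $r$, whereas you apply Lemma~\ref{mat27oct}(i) directly for general $r$; your explicit handling of bijectivity and of the inverse maps is a welcome bit of extra rigor that the paper leaves implicit.
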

\begin{proof}Let $\pi,\rho \in {\Sym}_n$ be any two adjacent vertices of $\Cay$. Then, $\rho=\sigma\circ\pi$ for some $\sigma=\sigma(i,j,k)\in T_n.$ Here, Lemma \ref{mat27oct} yields $\textsf{f}(\rho)=\textsf{f}_{\pi_1}(\sigma)\circ\textsf{f}(\pi).$ Therefore, the assertion for $\textsf{f}$ follows from Proposition \ref{th1}. By induction on $r$, this holds true for all toric maps.

A similar argument can be used to show the assertion for the reverse map.
\end{proof}
By (\ref{eqoct15a}), the set consisting of $\textsf{F}$ and its coset $\textsf{F}\circ\textsf{g}$ is a dihedral group $\textsf{D}_{n+1}$ of order $2(n+1)$. Clearly, $\textsf{D}_{n+1}$ fixes $\iota$. Now, Proposition \ref{prop10oct} has the following corollary.
\begin{cor}
\label{teor1} The automorphism group of $\Cay$ contains a dihedral subgroup of order $2(n+1)$ fixing the identity permutation.
\end{cor}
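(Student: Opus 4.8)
The plan is to assemble the dihedral group explicitly from the maps already shown to be automorphisms, and then read off its order and its fixed point. By Proposition~\ref{prop10oct} every toric map $\textsf{f}_r$ and the reverse map $\textsf{g}$ lie in $\Aut(\Cay)$, so it suffices to show that the set $\textsf{D}_{n+1}=\textsf{F}\cup(\textsf{F}\circ\textsf{g})$ is a subgroup of order $2(n+1)$, that it is dihedral, and that it fixes $\iota$. Since $\textsf{F}=\{\textsf{f}_r\}$ is already known to be cyclic of order $n+1$ generated by $\textsf{f}$, and since $\textsf{g}$ is an involution, closure of $\textsf{D}_{n+1}$ under composition follows once one knows how $\textsf{g}$ conjugates $\textsf{F}$, and this is precisely relation~(\ref{eqoct15a}).

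First I would record the defining relations. Taking $r=1$ in~(\ref{eqoct15a}) and using $\textsf{f}_{n}=\textsf{f}^{-1}$ (as $\textsf{F}$ has order $n+1$) gives $\textsf{g}\circ\textsf{f}\circ\textsf{g}=\textsf{f}^{-1}$, while $\textsf{f}^{\,n+1}=\mathrm{id}$ and $\textsf{g}^2=\mathrm{id}$. These are exactly the relations of the dihedral group of order $2(n+1)$, so $\textsf{D}_{n+1}=\langle \textsf{f},\textsf{g}\rangle$ is dihedral provided the coset $\textsf{F}\circ\textsf{g}$ is disjoint from $\textsf{F}$, equivalently $\textsf{g}\notin\textsf{F}$. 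To see this, suppose $\textsf{g}\in\textsf{F}$; then $\textsf{g}$ would commute with $\textsf{f}$ because $\textsf{F}$ is abelian, whence $\textsf{f}^{-1}=\textsf{g}\circ\textsf{f}\circ\textsf{g}=\textsf{f}$, forcing $\textsf{f}^2=\mathrm{id}$, i.e. $n+1\mid 2$, which is excluded in the non-trivial range $n\ge 2$. Hence $\textsf{D}_{n+1}$ has order exactly $2(n+1)$.

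It then remains to check that $\textsf{D}_{n+1}$ fixes $\iota$, for which it is enough to check the two generators. For $\textsf{f}$ I would put $\pi=\iota$ in~(\ref{eq2oct9}): since $[0\,\iota]$ is the identity on $[n]^0$ and $\iota_r=r$, the defining equation becomes $[0\,\rho]=\alpha^{\,n+1-r}\circ\alpha^{r}=\alpha^{\,n+1}=\mathrm{id}$, so $\textsf{f}_r(\iota)=\iota$ and in particular $\textsf{f}(\iota)=\iota$. For $\textsf{g}$, formula~(\ref{eq9octb}) gives $(\textsf{g}(\iota))_t=n+1-\iota_{n+1-t}=n+1-(n+1-t)=t$, so $\textsf{g}(\iota)=\iota$. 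As both generators fix $\iota$, so does every element of the group they generate.

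Combining the three ingredients, namely $\textsf{D}_{n+1}\le\Aut(\Cay)$ by Proposition~\ref{prop10oct}, $|\textsf{D}_{n+1}|=2(n+1)$ with dihedral structure, and $\textsf{D}_{n+1}$ fixing $\iota$, yields the corollary. The only genuine subtlety I anticipate is the order count, that is, confirming that $\textsf{g}$ does not already lie inside the cyclic group $\textsf{F}$ (which would collapse $\textsf{D}_{n+1}$ to order $n+1$); the non-commutation argument above settles this cleanly, so no case-by-case evaluation on sample permutations is needed. Everything else is a direct reading of the relations recorded above.
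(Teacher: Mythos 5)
Your proof is correct and takes essentially the same route as the paper: both build the dihedral group as $\textsf{F}\cup(\textsf{F}\circ\textsf{g})$, obtain its structure from relation~(\ref{eqoct15a}) together with $\textsf{g}^2=\mathrm{id}$, get membership in $\Aut(\Cay)$ from Proposition~\ref{prop10oct}, and note that the generators fix $\iota$. The only difference is that you spell out what the paper compresses into ``Clearly'' --- the verification that $\textsf{g}\notin\textsf{F}$ (so the order is exactly $2(n+1)$) and the explicit check that $\textsf{f}_r$ and $\textsf{g}$ fix $\iota$.
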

{}From now on, the term of \emph{toric-reverse group} stands for $\textsf{D}_{n+1}$, and $\textsf{G}$ denotes the stabilizer of $\iota$ in the automorphism group of $\Cay$. By Corollary \ref{teor1}, the question arises of whether or not $\textsf{D}_{n+1}$ is already $\textsf{G}$. We state our result on this problem.

Clearly, $\textsf{G}$ preserves the subgraph
of $\Cay$ whose vertices are the block transpositions. We call this subgraph $\Gamma$ the \emph{block transposition graph} and use $\textsf{R}$ to denote its automorphism group. The kernel of the permutation representation of $\textsf{G}$ on $T_n$ is a normal subgroup $\textsf{N}$, and the factor group $\textsf{G}/\textsf{N}$ is a subgroup of $\textsf{R}$. Since $N$ is the subgroup fixing $T_n$ element-wise, $\textsf{D}_{n+1}$ and $\textsf{N}$ have trivial intersection, and the toric-reverse group can be regarded as a subgroup of $\textsf{G}/\textsf{N}$. Our main result in this paper is a proof of the theorem below.
\begin{thm}
\label{teor2} The automorphism group of $\Gamma$ is the toric-reverse group.
\end{thm}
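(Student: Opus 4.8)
The plan is to prove the reverse inclusion $\textsf{R}\le \textsf{D}_{n+1}$; since $\textsf{D}_{n+1}$ meets $\textsf{N}$ trivially and therefore embeds into $\textsf{G}/\textsf{N}\le\textsf{R}$, this yields $\textsf{R}=\textsf{D}_{n+1}$. As $|\textsf{D}_{n+1}|=2(n+1)$, it suffices to bound $|\textsf{R}|$ from above by $2(n+1)$, and I would do this by orbit--stabiliser applied to a canonically defined (hence $\textsf{R}$-invariant) vertex set $V$ on which $\textsf{D}_{n+1}$ already acts regularly. The whole difficulty is then concentrated in showing that the stabiliser in $\textsf{R}$ of a single vertex of $V$ is trivial.

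First I would produce $V$ from purely local, isomorphism-invariant data of $\Gamma$. Using the adjacency rule $\sigma\sim\tau\iff \sigma\circ\tau^{-1}\in T_n$ together with the Partition Lemma~\ref{lem1oct9} and the explicit description (\ref{feb9})--(\ref{eq22ott12}) of block transpositions, I would determine, for each edge, the number of common neighbours of its two endpoints, and isolate the edges lying in no triangle. The target statement is that there are exactly $n+1$ such edges and that their $2(n+1)$ endpoints constitute a single $\textsf{D}_{n+1}$-orbit $V$; the formulas (\ref{22march2015A}) and (\ref{eq1oct11}) for $\textsf{f}$ and $\textsf{g}$ from Lemma~\ref{lemc19ott2013} then show that $\textsf{D}_{n+1}$ permutes these edges and acts on $V$ with trivial point-stabiliser, i.e.\ regularly, so that $|V|=2(n+1)$. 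Since ``endpoint of a triangle-free edge'' is preserved by every automorphism, $V$ is $\textsf{R}$-invariant, and because $\textsf{D}_{n+1}\le\textsf{R}$ is already transitive on $V$, so is $\textsf{R}$.

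The heart of the argument, and the step I expect to be the main obstacle, is the rigidity claim $\textsf{R}_{v_0}=1$ for a fixed $v_0\in V$: no nontrivial automorphism of $\Gamma$ fixes a triangle-free-edge endpoint. Note that this is not merely sufficient but equivalent to the theorem, since the order-$2$ element of $\textsf{D}_{n+1}$ stabilising the free edge at $v_0$ swaps its endpoints and hence already fixes no vertex of $V$. I would attack it by propagation: given $\psi\in\textsf{R}_{v_0}$, first observe that $\psi$ fixes the unique free-edge partner $v_0'$ of $v_0$, then use the explicit neighbourhoods computed above to show that $\psi$ must fix, successively, the vertices of the distinguished cycle $B=\langle\beta\rangle\setminus\{\iota\}$ and the blocks $L$, $F$, $S_{n-2}^\vartriangle$ of the Partition Lemma into which $v_0$ and its neighbours fall. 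The book-keeping needed to verify that distinct vertices are separated by their adjacencies to the already-fixed ones, so that the fixed set grows to all of $T_n$, is the genuinely laborious part; the $\textsf{f},\textsf{g}$ formulas are what keep it uniform in $n$.

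Once $\textsf{R}_{v_0}=1$ is established, orbit--stabiliser gives $|\textsf{R}|=|V|\cdot|\textsf{R}_{v_0}|=2(n+1)$, whence $\textsf{R}=\textsf{D}_{n+1}$. Finally I would dispose of the low-degree exceptions separately: the cases $n\le 3$ are trivial and $n=4$ is anomalous, as already flagged, so the generic adjacency computation underlying the construction of $V$ and the propagation step would be carried out for $n\ge 5$, with the remaining small cases checked by hand or by the computer verification mentioned in the introduction.
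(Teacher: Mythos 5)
Your outline reproduces the paper's architecture exactly: the paper also takes $V$ to be the $2(n+1)$ endpoints of the edges of maximal cliques of size $2$ (your triangle-free edges), proves there are precisely $n+1$ of them (Proposition \ref{thm2oct11} together with Lemma \ref{cor1oct13}), proves that the toric-reverse group acts regularly on $V$ (Lemma \ref{lem1oct18}), notes that $V$ is invariant under every automorphism because its definition is intrinsic, and reduces the theorem to the rigidity statement that only the identity fixes a vertex of $V$ (Lemma \ref{lemfoct12}), finishing by the same orbit--stabiliser count, with $n\le 4$ handled by computation. Every individual claim you make along the way is true; that the paper works in the right-invariant graph $\bar\Gamma\cong\Gamma$ is a purely notational difference.

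The genuine gap is the rigidity step itself, which you correctly identify as the heart of the matter but then defer as ``book-keeping'': it is not book-keeping, and the propagation order you propose (fix $B$ pointwise first, then $L$, $F$, $S_{n-2}^\vartriangle$ from neighbourhoods) does not obviously close up. Since $B$ is a clique, its vertices are not separated by adjacencies within $B$; separating them by their neighbours in $L$ and $F$ presupposes both that the automorphism preserves $L$ and $F$ individually (the reverse map swaps them, so this needs an argument) and that enough of those neighbours are already individually fixed --- which is exactly what is to be proved. The paper's proof of Lemma \ref{lemfoct12} avoids this by two ideas absent from your sketch. First, it is an induction on $n$ (base cases $n=5,6$ by computer): after pinning down a few vertices of $V$ near $v_0$, the stabiliser $\bar{\textsf{H}}$ is shown to fix the vertex $\sigma(2,3,5)$ and to preserve $S_{n-2}^\vartriangle$ setwise (recoverable as the set of vertices with no neighbour in $B$, by Lemma \ref{lem1oct11} and Proposition \ref{prova} (i)); since $S_{n-2}^\vartriangle\cong T_{n-2}$, the inductive hypothesis fixes it pointwise. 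Second, to reach $B\cup L$ no adjacency propagation is used at all: once $\bar{\textsf{H}}$ fixes $V$ pointwise it is the kernel of the action of the full automorphism group on $V$, hence normal, and normality combined with the fact that every $\sigma(0,j,k)$ is torically equivalent to an element of $S_{n-2}^\vartriangle$ (equations (\ref{eqa14oct})) transports the fixed vertices across toric classes; $F$ is then finished off by the $(1,1)$-biregular matching between $L$ and $F$ of Proposition \ref{prova} (ii). So your proposal has the right skeleton, but the lemma in which all the content of the theorem resides is asserted, not proved, and proving it requires ideas (induction through the embedded $T_{n-2}$, and the normality-plus-toric-equivalence argument) that the sketch does not anticipate.
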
The proof of Theorem \ref{teor2} will be completed in Section \ref{s6}, using several results on combinatorial properties of $\Gamma$, especially on the set of its maximal cliques of size $2$ in Section \ref{s5}. As a corollary, $\textsf{G}=\textsf{N}\rtimes\textsf{D}_{n+1}$. From this the following result is obtained.
\begin{thm}
\label{teor2b} The automorphism group of $\Cay$ is the product of the right translation group by $\emph{\textsf{N}}\rtimes \emph{\textsf{D}}_{n+1}.$
\end{thm}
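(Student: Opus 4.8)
The plan is to reduce the whole statement to an identification of the vertex stabiliser $\textsf{G}$ of $\iota$, and then to feed in Theorem~\ref{teor2}. First I would record the factorisation already available in the excerpt: every automorphism of $\Cay$ is a product of a right translation by an element of $\textsf{G}$, i.e. $\Aut(\Cay)=R(\Cay)\,\textsf{G}$. Since $R(\Cay)$ acts regularly on $\Sym_n$, the only translation fixing $\iota$ is the identity, so $R(\Cay)\cap\textsf{G}=1$; equivalently, by orbit--stabiliser applied to the $R(\Cay)$-transitive action, $|\Aut(\Cay)|=|\Sym_n|\cdot|\textsf{G}|=|R(\Cay)|\cdot|\textsf{G}|$, and the factorisation $R(\Cay)\,\textsf{G}$ is irredundant. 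Hence it suffices to prove $\textsf{G}=\textsf{N}\rtimes\textsf{D}_{n+1}$.

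Next I would establish this inner decomposition by a short order count. By definition $\textsf{N}$ is the kernel of the permutation representation of $\textsf{G}$ on $T_n$, so $\textsf{N}\trianglelefteq\textsf{G}$; and since $\textsf{G}$ preserves $\Gamma$, the first isomorphism theorem identifies $\textsf{G}/\textsf{N}$ with a subgroup of $\textsf{R}=\Aut(\Gamma)$, whence $|\textsf{G}/\textsf{N}|\le|\textsf{R}|$. Theorem~\ref{teor2} gives $\textsf{R}=\textsf{D}_{n+1}$, so $|\textsf{G}/\textsf{N}|\le 2(n+1)$. On the other hand $\textsf{D}_{n+1}\le\textsf{G}$ with $\textsf{D}_{n+1}\cap\textsf{N}=1$ as recorded in the excerpt, so the canonical projection $\textsf{G}\to\textsf{G}/\textsf{N}$ restricts to an injection $\textsf{D}_{n+1}\hookrightarrow\textsf{G}/\textsf{N}$; therefore $2(n+1)=|\textsf{D}_{n+1}|\le|\textsf{G}/\textsf{N}|$. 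The two inequalities force $|\textsf{G}/\textsf{N}|=2(n+1)$ and make the injection $\textsf{D}_{n+1}\hookrightarrow\textsf{G}/\textsf{N}$ an isomorphism. Consequently $\textsf{G}=\textsf{N}\,\textsf{D}_{n+1}$: every $g\in\textsf{G}$ satisfies $g\textsf{N}=d\textsf{N}$ for some $d\in\textsf{D}_{n+1}$, so $g\in\textsf{N}d$. Together with $\textsf{N}\trianglelefteq\textsf{G}$ and $\textsf{N}\cap\textsf{D}_{n+1}=1$, this is exactly $\textsf{G}=\textsf{N}\rtimes\textsf{D}_{n+1}$.

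Combining the factorisation $\Aut(\Cay)=R(\Cay)\,\textsf{G}$ with $\textsf{G}=\textsf{N}\rtimes\textsf{D}_{n+1}$ yields $\Aut(\Cay)=R(\Cay)\,(\textsf{N}\rtimes\textsf{D}_{n+1})$, which is the assertion. I expect the genuinely hard content to lie entirely in the already-assumed Theorem~\ref{teor2}, namely the computation of $\Aut(\Gamma)$; granting it, the present argument is pure group-theoretic bookkeeping. The one step deserving care is the upgrade from the mere inclusion $\textsf{G}/\textsf{N}\le\textsf{R}$ to an equality: this is precisely where the order count is essential, and it works only because $\textsf{D}_{n+1}$ survives the quotient by $\textsf{N}$, i.e. because $\textsf{D}_{n+1}\cap\textsf{N}=1$. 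I would also be careful to note that the outer factorisation $R(\Cay)\,\textsf{G}$ is only a product of subgroups and need not be a split extension, whereas the inner $\textsf{N}\rtimes\textsf{D}_{n+1}$ genuinely is, so no additional normality claim about $R(\Cay)$ is implicitly required.
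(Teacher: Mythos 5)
Your proposal is correct and follows essentially the same route as the paper: reduce to the stabiliser $\textsf{G}$ of $\iota$ via the regularity of $R(\Cay)$, then deduce $\textsf{G}=\textsf{N}\rtimes\textsf{D}_{n+1}$ from Theorem~\ref{teor2} combined with the embedding $\textsf{G}/\textsf{N}\le\textsf{R}$ and the fact that $\textsf{N}\cap\textsf{D}_{n+1}=1$. The paper records this deduction only as a one-line corollary of Theorem~\ref{teor2}; your write-up supplies the same group-theoretic bookkeeping in full detail.
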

\begin{rem} Computation shows that $\textsf{N}$ is trivial for $n\leq 8$. This motivates the following conjecture.
\begin{conj} \label{March24}
The automorphism group of $\Cay$ is the product of the right translation group by the toric-reverse group.
\end{conj}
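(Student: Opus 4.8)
The statement is equivalent to the single group-theoretic assertion $\textsf{N}=1$. Indeed, we already know that $\Aut(\Cay)=R(\Cay)\,\textsf{G}$ and, as recorded after Theorem \ref{teor2}, that $\textsf{G}=\textsf{N}\rtimes\textsf{D}_{n+1}$; collapsing $\textsf{N}$ is exactly what upgrades Theorem \ref{teor2b} to the conjecture. Now $\textsf{N}$ is by definition the kernel of the action of $\textsf{G}$ on $T_n$, and $T_n$ is precisely the neighbourhood of $\iota$ in $\Cay$, since $\{\iota,\rho\}$ is an edge if and only if $\rho=\sigma(i,j,k)\circ\iota=\sigma(i,j,k)\in T_n$. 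Hence the plan is to prove that $\textsf{G}$ acts \emph{faithfully} on the neighbourhood of $\iota$; equivalently, that the only automorphism of $\Cay$ fixing $\iota$ together with every block transposition is the identity. Once this local rigidity is in hand, Theorem \ref{teor2} closes the argument: the faithful group $\textsf{G}\cong\textsf{G}/\textsf{N}$ embeds into $\Aut(\Gamma)=\textsf{D}_{n+1}$ and contains $\textsf{D}_{n+1}$, forcing $\textsf{G}=\textsf{D}_{n+1}$ and $\textsf{N}=1$.

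To prove the rigidity I would argue by induction on the graph distance from $\iota$. Fix $\phi\in\textsf{N}$; by hypothesis $\phi$ fixes every vertex at distance $\le 1$, and since $\phi$ fixes $\iota$ it preserves distance to $\iota$. Suppose $\phi$ fixes every vertex at distance $\le d$. If $\pi$ lies at distance $d+1$ then $\phi(\pi)$ is again at distance $d+1$, and for every neighbour $z$ of $\pi$ with $z$ at distance $d$ we have $\phi(z)=z$ adjacent to $\phi(\pi)$; applying $\phi^{-1}$ as well shows that $\pi$ and $\phi(\pi)$ have exactly the same set of neighbours at distance $d$. Thus it suffices to prove, for each $d$, that the assignment sending a vertex at distance $d+1$ to its set of neighbours at distance $d$ is injective. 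The decisive instance is $d=1$: a distance-$2$ vertex has the form $\tau\sigma$ with $\sigma,\tau\in T_n$ and $\tau\sigma\notin T_n\cup\{\iota\}$, and its lower neighbours are the block transpositions $z\in T_n$ with $z(\tau\sigma)^{-1}=z\sigma^{-1}\tau^{-1}\in T_n$. So the base case is the purely combinatorial statement that distinct such products $\tau\sigma$ are separated by some block transposition adjacent to exactly one of them.

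The heart of the matter is therefore a separation lemma for products of two block transpositions, and here I would exploit the Partition Lemma \ref{lem1oct9}. Writing $T_n=B\cupdot L\cupdot F\cupdot S_{n-2}^\vartriangle$ splits the analysis of when $z\sigma^{-1}\tau^{-1}$ remains a block transposition into a bounded number of cases controlled by the relative positions of the cut points; the explicit function (\ref{feb9}) together with the inversion rule (\ref{eqa18oct}) decides membership in $T_n$ in each case. Since $\Gamma$ is $2(n-2)$-regular, the fixed lower-neighbour sets have a controlled size, and this numerical handle is what one would use to show that two distinct distance-$2$ vertices cannot have identical neighbourhoods inside $T_n$.

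The step I expect to be the main obstacle is exactly this separation lemma, and it is the reason the statement is stated as a conjecture rather than a theorem. Although the closed neighbourhood of $\iota$ is completely transparent, controlling the overlaps of lower-neighbourhoods uniformly in $n$ requires delicate bookkeeping of cut-point collisions, and the cancellations that occur when $z\sigma^{-1}$ lands in $B$ or straddles two of the blocks $L,F,S_{n-2}^\vartriangle$ are hard to exclude by one clean argument; the same difficulty, amplified, reappears in pushing the induction beyond $d=1$, where the distance-$d$ layers are no longer described by a single product. The exhaustive computation reported for $n\le 8$ shows that no collision occurs in small degree, which is strong evidence that the separation lemma holds for every $n$; converting this evidence into a degree-free proof is the missing ingredient.
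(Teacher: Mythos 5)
You have not proved the statement, and you essentially say so yourself --- which is the correct assessment, because in the paper this statement is exactly Conjecture \ref{March24}, left \emph{open}: the paper proves only Theorem \ref{teor2b}, namely $\Aut(\Cay)=R(\Cay)\,(\textsf{N}\rtimes\textsf{D}_{n+1})$, and its sole evidence that $\textsf{N}$ is trivial is exhaustive computation for $n\le 8$. So there is no proof in the paper to compare against. Your reductions, for what they are worth, are sound and consistent with the paper's framework: $\textsf{N}$ is by definition the kernel of $\textsf{G}$ acting on $T_n$, which is the neighbourhood of $\iota$; since the paper establishes $\textsf{G}=\textsf{N}\rtimes\textsf{D}_{n+1}$, the conjecture is equivalent to $\textsf{N}=1$; and your distance-induction correctly shows that $\textsf{N}=1$ would follow if, at every level $d$, the map sending a vertex at distance $d+1$ from $\iota$ to its set of neighbours at distance $d$ were injective (your two-sided argument via $\phi$ and $\phi^{-1}$ for equality of lower-neighbourhood sets is fine, as is the description of distance-$2$ vertices as products $\tau\circ\sigma$ with lower neighbours $z$ satisfying $z\circ(\tau\circ\sigma)^{-1}\in T_n$, using that $T_n$ is inverse closed by (\ref{eqa18oct})).

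The genuine gap is the one you flag: the ``separation lemma'' at $d=1$, and its analogues for all larger $d$, are never proved, and that is precisely where the open difficulty of the conjecture lives; an outline that terminates in ``the missing ingredient'' is a research plan, not a proof. Two further caveats on the plan itself. First, injectivity of lower-neighbourhoods is sufficient but not necessary for $\textsf{N}=1$: you only need to separate $\pi$ from $\phi(\pi)$ for $\phi$ ranging over the normal subgroup $\textsf{N}$, which intersects $\textsf{D}_{n+1}$ trivially and is normalized by it --- structure a successful proof would likely have to exploit, and which your purely local scheme ignores; conversely, a counterexample to your separation lemma would not by itself refute the conjecture. Second, beyond $d=1$ the vertices are products of $d+1$ block transpositions and the case analysis over the partition $T_n=B\cupdot L\cupdot F\cupdot S_{n-2}^\vartriangle$ of Lemma \ref{lem1oct9} is no longer bounded in any obvious way, as you concede. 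You also leave unused the one tool the paper explicitly offers toward the conjecture, Proposition \ref{29oct}, which identifies $R(\Cay)\textsf{D}_{n+1}$ with the direct product of $\Sym_{n+1}$ by a group of order $2$; any eventual proof of $\Aut(\Cay)=R(\Cay)\textsf{D}_{n+1}$ would presumably pass through this description of the candidate full automorphism group.
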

\end{rem}
In order to prove the Conjecture \ref{March24}, a useful result is the following proposition, where $R(\Cay) \textsf{D}_{n+1}$ is the set of all products $\textsf{d}\circ\textsf{h}$ with $\textsf{d}\in\textsf{D}_{n+1}$ and $\textsf{h}\in R(\Cay)$.
\begin{prop}
\label{29oct}
The product of the right translation group by the toric-reverse group is isomorphic to the direct product of ${\Sym}_{n+1}$ by a group of order $2$.
\end{prop}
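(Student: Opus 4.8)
The plan is to realize $R(\Cay)\,\textsf{D}_{n+1}$ as an internal direct product $H\times\langle z\rangle$, where $H=R(\Cay)\,\textsf{F}$ is an index-$2$ subgroup isomorphic to $\Sym_{n+1}$ and $z$ is a central involution. Since every element of $\textsf{D}_{n+1}$ fixes $\iota$ while a nontrivial right translation moves it, $R(\Cay)\cap\textsf{D}_{n+1}=1$, whence $|R(\Cay)\,\textsf{D}_{n+1}|=|\Sym_n|\cdot 2(n+1)=2\,(n+1)!=|\Sym_{n+1}\times C_2|$. So it suffices to produce the two factors and check they meet trivially, are normal (resp.\ central), and generate. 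Throughout I write $R_g$ for the right translation $\pi\mapsto\pi\circ g$, so $R(\Cay)=\{R_g\mid g\in\Sym_n\}$, I identify $\Sym_n$ with the stabilizer of $0$ inside $\Sym_{n+1}=\Sym([n]^0)$ via $g\mapsto[0\,g]$, and I set $\alpha=[1\,2\cdots n\,0]$, the $(n+1)$-cycle $x\mapsto x+1\pmod{n+1}$.

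First I would record two straightening identities. Lemma \ref{mat27oct}(i) with $r=1$ gives $\textsf{f}(\pi\circ h)=\textsf{f}_{h_1}(\pi)\circ\textsf{f}(h)=\textsf{f}^{\,h_1}(\pi)\circ\textsf{f}(h)$, that is, for every $h\in\Sym_n$,
\[\textsf{f}\circ R_h=R_{\textsf{f}(h)}\circ\textsf{f}^{\,h_1}.\qquad(\star)\]
On the $\Sym_{n+1}$ side, the definition (\ref{eq2oct9}) of $\textsf{f}=\textsf{f}_1$ reads $[0\,\textsf{f}(h)]=\alpha^{-h_1}\circ[0\,h]\circ\alpha$, i.e.
\[[0\,h]\circ\alpha=\alpha^{\,h_1}\circ[0\,\textsf{f}(h)].\qquad(\star\star)\]
Identity $(\star)$ shows $\textsf{F}\,R(\Cay)\subseteq R(\Cay)\,\textsf{F}$, so $H=R(\Cay)\,\textsf{F}$ is a subgroup of order $n!\,(n+1)=(n+1)!$, and counting shows every element of $H$ has a unique normal form $R_h\circ\textsf{f}^{\,c}$ with $h\in\Sym_n$, $0\le c\le n$.

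Next I would prove $H\cong\Sym_{n+1}$ by defining $\theta\colon H\to\Sym_{n+1}$ on generators by $\theta(\textsf{f})=\alpha^{-1}$ and $\theta(R_h)=[0\,h^{-1}]$. The key point is that $\theta$ carries $(\star)$ to the inverse of $(\star\star)$: inverting $(\star\star)$ gives $\alpha^{-1}\circ[0\,h^{-1}]=[0\,\textsf{f}(h)^{-1}]\circ\alpha^{-h_1}$, which is precisely $\theta(\textsf{f})\,\theta(R_h)=\theta(R_{\textsf{f}(h)})\,\theta(\textsf{f})^{\,h_1}$; moreover $\theta(R_h\circ R_{h'})=\theta(R_{h'h})=[0\,h^{-1}h'^{-1}]=\theta(R_h)\theta(R_{h'})$ and $\theta(\textsf{f}^{\,n+1})=1$. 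Hence $\theta$ respects the defining relations of $H$ and extends to a homomorphism; its image contains $\alpha$ and all of the stabilizer of $0$, hence is all of $\Sym_{n+1}$, and since $|H|=(n+1)!$, the map $\theta$ is an isomorphism. Finally I would split off the $C_2$: let $z$ be the map $\pi\mapsto\omega\circ\pi$. Since $(\omega\,\pi\,\omega)_t=n+1-\pi_{n+1-t}=(\textsf{g}(\pi))_t$ by (\ref{eq9octb}), the reverse map is conjugation by $\omega$, so $z=R_\omega\circ\textsf{g}\in R(\Cay)\,\textsf{D}_{n+1}$. Because left and right translations commute, $z$ commutes with $R(\Cay)$; because $\omega$ has a one-element toric class ($\textsf{f}_r(\omega)=\omega$ for all $r$ by (\ref{eq9oct})), Lemma \ref{mat27oct}(i) gives $\textsf{f}\circ z=z\circ\textsf{f}$; and since $\textsf{g}$ is an automorphism fixing $\omega$, also $\textsf{g}\circ z=z\circ\textsf{g}$. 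Thus $z$ is central in $G=R(\Cay)\,\textsf{D}_{n+1}$ and $z^2=1$ as $\omega^2=\iota$. Writing $G=H\cup H\textsf{g}$ (from $\textsf{D}_{n+1}=\textsf{F}\cup\textsf{F}\textsf{g}$), we have $z=R_\omega\circ\textsf{g}\in H\textsf{g}$, so $z\notin H$; as $[G:H]=2$ this yields $H\cap\langle z\rangle=1$ and $G=H\langle z\rangle$, and with $z$ central this gives $G=H\times\langle z\rangle\cong\Sym_{n+1}\times C_2$.

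The main obstacle is the second step, namely checking that the generator assignment $\theta$ genuinely extends to a homomorphism on all of $H$. This is exactly where the noncommutativity bites, since $\textsf{f}$ is \emph{not} a group automorphism: Lemma \ref{mat27oct}(i) is a twisted, not a plain, product rule. The clean way around it is to view $H$ as the Zappa--Sz\'ep product $R(\Cay)\bowtie\textsf{F}$ determined by the single straightening law $(\star)$, to view $\Sym_{n+1}$ as the analogous product of the stabilizer of $0$ with $\langle\alpha\rangle$ determined by $(\star\star)$, and to observe that $\theta$ matches the one law with the inverse of the other. Carrying this out rigorously amounts to verifying multiplicativity of $\theta$ on the normal forms $R_h\circ\textsf{f}^{\,c}$ by repeatedly straightening with $(\star)$ and $(\star\star)$, which becomes routine once the two boxed identities are in hand.
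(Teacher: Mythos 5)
Your proposal is correct and takes essentially the same approach as the paper: your subgroup $H=R(\Cay)\,\textsf{F}$, your map $\theta$ (which is the paper's $\Phi$, sending $\textsf{h}\circ\textsf{f}^{\,r}$ to $[0\,h^{-1}]\circ\alpha^{n+1-r}$, verified by the same straightening identities from Lemma \ref{mat27oct} and (\ref{lem3oct11})), and your central involution $z\colon\pi\mapsto\omega\circ\pi$ (the paper's $\textsf{t}=\textsf{g}\circ\textsf{w}$) are exactly the paper's ingredients. The only minor deviations are cosmetic: you get bijectivity of $\theta$ from surjectivity plus order counting where the paper computes the kernel, and you rule out $z\in H$ by the index-$2$ coset count where the paper argues by contradiction via the trivial center of $R(\Cay)$.
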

\begin{proof} First we find an involution $\textsf{t}$ that centralizes $R(\Cay)\textsf{F}.$ Two automorphisms of $\Cay$ arise from the reverse permutation $w$, namely $\textsf{g}$ and the right translation $\textsf{w},$ and $\textsf{g}\circ\textsf{w}$ is the automorphism $\textsf{t}$ that takes $\pi$ to $w\circ \pi$. Obviously, $\textsf{t}\in R(\Cay)\textsf{D}_{n+1}$ is an involution as $\textsf{g}$ and $\textsf{w}$ are involutions.

Now, we show that $\textsf{t}$ centralizes $R(\Cay)\textsf{F}$. In order to do that, it suffices to prove $\textsf{t}\circ\textsf{f}=\textsf{f}\circ\textsf{t}$ since $\textsf{t}$ commutes with any right translation. Now, for every $\pi\in {\Sym}_n,$
$$\textsf{f}\circ\textsf{t}(\pi)=\rho\Longleftrightarrow [0\,\rho]=\alpha^{\pi_1}\circ[0\, w]\circ\alpha^n\circ[0\,\pi]\circ\alpha,$$by Lemma \ref{mat27oct} (i). On the other hand,
$$\textsf{t}\circ\textsf{f}(\pi)=\rho'\Longleftrightarrow [0\,\rho']=[0\, w]\circ \alpha^{n-\pi_1}\circ[0\,\pi]\circ\alpha.$$Since $[0\, w]\circ\alpha^{n+1-\pi_{1}}=\alpha^{\pi_1}\circ[0\, w]$ by (\ref{eqoct15a}), this yields
that $\textsf{t}$ commutes with $\textsf{F}$.

Now, we show that $\textsf{t}$ is off $R(\Cay)\textsf{F}$. Suppose on the contrary that there exists some right translation $\textsf{h}$ such that $\textsf{t}=\textsf{h}\circ\textsf{f}^{\,r}$ with $0\le r \le n$. Since $\textsf{t}$ is an involution this implies $\textsf{t}\circ\textsf{h}\in\textsf{F},$  and then $\textsf{t}\circ\textsf{h}$ fixes $\iota$. On the other hand, $\textsf{t}\circ\textsf{h}(\iota)=w\circ h.$ Thus, $h=w$ is an involution. Therefore, $\textsf{t}\circ\textsf{h}=\textsf{t}\circ\textsf{w}$ is an involution as well. Since $\iota$ is the only involution in $\textsf{F},$ $\textsf{t}\circ\textsf{h}=\iota,$ hence $\textsf{t}=\textsf{h}.$ Thus, we have proven that $\textsf{t}$ is a right translation. Since the center of $R(\Cay)$ is trivial while $\textsf{t}$ commutes with any right translation, we have $\textsf{t}\not\in R(\Cay),$ a contradiction.

Therefore, $\textsf{t}\in R(\Cay)\textsf{D}_{n+1}=R(\Cay)\textsf{F}\times \langle\textsf{t}\rangle.$ In fact, a straightforward computation shows that $$\textsf{h}\circ\textsf{f}^{\,r}\circ\textsf{g}=\textsf{h'}\circ\textsf{f}^{\,r}\circ\textsf{g}\circ\textsf{w},$$where $\textsf{h}=\textsf{h'}\circ\textsf{w},$ for any right translation $\textsf{h}$ and $0\le r\le n.$

To prove the isomorphism $R(\Cay)\textsf{F}\cong \Sym_{n+1}$, let $\Phi$ be the map that takes $\textsf{h}\circ\textsf{f}^{\,r}$ to $[0\,h^{-1}]\circ \alpha^{n+1-r}$. For any $h,k,\pi\in \Sym_n$ and $0\leq r,u \le n$, by Lemma \ref{mat27oct} (i),
$$\textsf{h}\circ\textsf{f}_r\circ \textsf{k}\circ\textsf{f}_u(\pi)=\textsf{h}\circ\textsf{f}_r(\textsf{f}_u(\pi)\circ k)=\textsf{f}_{u+k_r}(\pi)\circ\textsf{f}_{r}(k)\circ h.
$$This shows that $\textsf{h}\circ\textsf{f}_r\circ \textsf{k}\circ\textsf{f}_u=\textsf{d}\circ \textsf{f}_{u+k_r}$ with $d=\textsf{f}_r(k) \circ h$. Then,
$$\begin{array}{lll}
\Phi(\textsf{h}\circ\textsf{f}_r\circ \textsf{k}\circ\textsf{f}_u)&=&[0\,h^{-1}]\circ[0\,\textsf{f}_r(k)^{-1}]\circ\alpha^{n+1-u-k_r}\\
{}&=&[0\,h^{-1}]\circ\alpha^{n+1-r}\circ[0\,k^{-1}]\circ\alpha^{n+1-u}
\end{array}$$since
$[0\,\textsf{f}_r(k)^{-1}]=[0\,\textsf{f}_{k_r}(k^{-1})]=\alpha^{n+1-r}\circ[0\,k^{-1}]\circ\alpha^{k_r},$ by (\ref{lem3oct11}). On the other hand,
$$\Phi(\textsf{h}\circ\textsf{f}_r)\circ\Phi(\textsf{k}\circ\textsf{f}_u)=[0\,h^{-1}]\circ\alpha^{n+1-r}\circ[0\,k^{-1}]\circ\alpha^{n+1-u}.$$Hence, $\Phi$ is a group homomorphism from $R(\Cay)\textsf{F}$ into the symmetric group on $[n]^0$. Furthermore, $\ker(\Phi)$ is trivial. In fact,  $[0\,h^{-1}]\circ \alpha^{n+1-r}=[0\,\iota]$ only occurs for $h=\iota$ since the inverse of $\alpha^{n+1-r}$ is the permutation $\alpha^{r}$ not fixing $0.$ This together with $|R(\Cay)\textsf{F}|=(n+1)!$ shows that $\Phi$ is bijective.
\end{proof}

\section{Combinatorial properties of \\the block transposition graph}\label{s5}
In this section we refer to $\Cat$ as the (right-invariant) Cayley graph, where $\{\pi,\rho\}$ is an edge if and only if $\rho=\pi \circ\sigma(i,j,k),$ for some $\sigma(i,j,k)\in T_n$. This is admissible since the the left-invariant and right-invariant Cayley graphs are isomorphic. In fact, the map taking any permutation to its inverse is such an isomorphism. Our choice is advantageous as the proofs in this section are formally simpler with the right-invariant Cayley graph notation. This change may be justified by (\ref{feb9}), which shows that computing $\pi\circ\sigma$ is more natural and immediate than $\sigma\circ\pi,$ whenever $\pi\in\Sym_n$ and $\sigma\in T_n$.

Now, every toric map $\textsf{f}_r$ is replaced by $\bar{\textsf{f}}_r$ defined as
\begin{equation}
\label{March29+}
\bar{\textsf{f}}_r(\pi)=(\textsf{f}_r(\pi^{-1}))^{-1},\qquad \pi\in \Sym_n.
\end{equation}
In addition, from (\ref{lem3oct11}) applied to $r=1$,
\begin{equation}
\label{March29}
\bar{\textsf{f}}(\pi)=\textsf{f}(\pi)^{\pi^{-1}_1},\qquad \pi\in \Sym_n.
\end{equation}
This shows that $\bar{\textsf{f}}\not\in \textsf{F}$. Nevertheless, $\bar{\textsf{f}}_r=\bar{\textsf{f}}^{\,r}$, as $\textsf{f}_r=\textsf{f}^{\,r}$ for any integer $r$ with $0\leq r\leq n$. Then $\bar{\textsf{F}}\cong \textsf{F}$, where
$\bar{\textsf{F}}$ is the group generated by $\bar{\textsf{f}}$, and the natural map ${\bar{\textsf{f}}}_{r}\rightarrow \textsf{f}_{r}$ is an isomorphism. 

Furthermore, since $\textsf{g}(\pi^{-1})^{-1}=\textsf{g}(\pi)$ for any $\pi\in \Sym_n$, $\bar{\textsf{g}}$ coincides with $\textsf{g}$. In addition, the group $\overline{\textsf{D}}_{n+1}$ generated by $\bar{\textsf{f}}$ and
$\textsf{g}$ is isomorphic to $\textsf{D}_{n+1}$, and then this is the \emph{toric-reverse group} of $\Cat$. 
\begin{lem}
\label{22marchC2015}
Let $\sigma(i,j,k)$ be any block transposition on $[n]$. Then
\begin{equation}
\label{22march2015}
\bar{\emph{\textsf{f}}}(\sigma(i,j,k))=\left\{\begin{array}{ll}
\sigma(i-1,j-1,k-1), & i >0 ,\\
\sigma(j-1,k-1,n), & i=0.
\end{array}
\right.
\end{equation}
\end{lem}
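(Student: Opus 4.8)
The plan is to reduce the claim to two facts already available in the excerpt: the action of the toric map $\textsf{f}$ on $T_n$ computed in Lemma \ref{lemc19ott2013}, and the inversion rule for block transpositions recorded in (\ref{eqa18oct}). Indeed, by the definition (\ref{March29+}) of the modified toric map, $\bar{\textsf{f}}(\sigma(i,j,k))=\bigl(\textsf{f}(\sigma(i,j,k)^{-1})\bigr)^{-1}$, so the entire computation is a three-step sandwich: invert, apply $\textsf{f}$, then invert again. No new structural input is needed.

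First I would rewrite the inner inverse. By (\ref{eqa18oct}) we have $\sigma(i,j,k)^{-1}=\sigma(i,k-j+i,k)$, and since $i<j<k$ one checks that $(i,k-j+i,k)$ is again admissible, i.e. $i<k-j+i<k$. The crucial observation at this stage is that inversion leaves the \emph{first} cut point equal to $i$; in particular it never moves us across the boundary between the two regimes $i>0$ and $i=0$. This is exactly what makes the case division in the statement of the lemma align with the case division of Lemma \ref{lemc19ott2013}. I would then apply $\textsf{f}$ to $\sigma(i,k-j+i,k)$. For $i>0$ the first branch of (\ref{22march2015A}) gives $\textsf{f}(\sigma(i,k-j+i,k))=\sigma(i-1,k-j+i-1,k-1)$, while for $i=0$, where the argument is $\sigma(0,k-j,k)$, the second branch gives $\sigma\bigl(k-(k-j)-1,\,n-(k-j),\,n\bigr)=\sigma(j-1,n-k+j,n)$.

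Finally I would invert once more via (\ref{eqa18oct}). For $i>0$, inverting $\sigma(i-1,k-j+i-1,k-1)$ replaces the middle cut point by $(k-1)-(k-j+i-1)+(i-1)=j-1$, producing $\sigma(i-1,j-1,k-1)$. For $i=0$, inverting $\sigma(j-1,n-k+j,n)$ replaces the middle cut point by $n-(n-k+j)+(j-1)=k-1$, producing $\sigma(j-1,k-1,n)$. These are precisely the two right-hand sides of (\ref{22march2015}), so the lemma follows. The computation is wholly mechanical, and the only point demanding genuine care — hence the one place an error could slip in — is the bookkeeping of the cut points across the two inversions, namely confirming at each stage that the triples produced satisfy the strict inequalities $0\le a<b<c\le n$ that legitimize the use of (\ref{eqa18oct}) and Lemma \ref{lemc19ott2013}. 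I expect the $i=0$ branch to be the most delicate, since there $\textsf{f}$ converts a left-anchored block transposition into a right-anchored one, making the index arithmetic slightly less transparent; it remains routine nonetheless.
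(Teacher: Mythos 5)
Your proposal is correct and follows essentially the same route as the paper: both arguments reduce the claim, via the definition (\ref{March29+}), to the formulas of Lemma \ref{lemc19ott2013} together with the inversion rule (\ref{eqa18oct}), and for $i=0$ your invert--apply--invert computation coincides exactly with the paper's. The only (minor) difference is in the case $i>0$, where the paper avoids the double inversion by noting that $\sigma(i,j,k)_1=1$, so that $\bar{\textsf{f}}(\sigma(i,j,k))=\textsf{f}(\sigma(i,j,k))$ by (\ref{March29}), whereas you run the same sandwich computation uniformly in both cases, at the cost of a little extra cut-point bookkeeping which you carry out correctly.
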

\begin{proof} 
Let $\sigma=\sigma(i,j,k)$. For $i>0$, we obtain $\sigma_1=1$ from (\ref{eq22ott12}). Therefore,  $\bar{\textsf{f}}(\sigma)=\textsf{f}(\sigma)$ by (\ref{March29}). Hence the statement for $i>0$ follows from Lemma \ref{lemc19ott2013}.

Now, suppose $i=0$. By (\ref{March29+}) and Lemma \ref{lemc19ott2013},
$$\bar{\textsf{f}}(\sigma)=(\textsf{f}(\sigma^{-1}))^{-1}=(\textsf{f}(\sigma(0,k-j,k)))^{-1}=\sigma(j-1,n-(k-j),n)^{-1}$$
which is equal to $\sigma(j-1,k-1,n)$, by (\ref{eqa18oct}). Therefore, the statement also holds for $i=0$. 
\end{proof}
Now, we transfer our terminology from Section \ref{cayleyleft}. In particular, $\bar{\textsf{f}}$ and its powers are the \emph{toric maps},
$\bar{\textsf{F}}$ the \emph{toric group}, and $\bar{\Gamma}$ is the \emph{block transposition graph} of $\Cat$.
\begin{prop}
\label{22march22D2015} Toric maps and the reverse map are automorphisms of $\Cat$.
\end{prop}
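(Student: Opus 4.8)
The cleanest route I would take starts from the observation, already recorded at the beginning of this section, that inversion $\pi\mapsto\pi^{-1}$ is a graph isomorphism $\Cay\to\Cat$. By the defining formula (\ref{March29+}), each map $\bar{\textsf{f}}_r$ is exactly the conjugate $\pi\mapsto(\textsf{f}_r(\pi^{-1}))^{-1}$ of the $\Cay$-automorphism $\textsf{f}_r$ by this involution, and likewise $\textsf{g}=\bar{\textsf{g}}$ is its own conjugate. Since conjugating an automorphism of $\Cay$ by an isomorphism $\Cay\to\Cat$ yields an automorphism of $\Cat$, Proposition~\ref{prop10oct} immediately gives the assertion. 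I would, however, also present a self-contained proof that parallels Proposition~\ref{prop10oct} and produces the explicit multiplication rule that is convenient later.

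For that direct route the plan is to establish, for all $\pi\in\Sym_n$, $\sigma\in T_n$, and $0\le r\le n$, the right-invariant counterpart of Lemma~\ref{mat27oct}(i),
\begin{equation*}
\bar{\textsf{f}}_r(\pi\circ\sigma)=\bar{\textsf{f}}_r(\pi)\circ\bar{\textsf{f}}_{s}(\sigma),\qquad s=(\pi^{-1})_r.
\end{equation*}
This should drop out of the definition: from $(\pi\circ\sigma)^{-1}=\sigma^{-1}\circ\pi^{-1}$ and Lemma~\ref{mat27oct}(i) one gets $\textsf{f}_r(\sigma^{-1}\circ\pi^{-1})=\textsf{f}_{(\pi^{-1})_r}(\sigma^{-1})\circ\textsf{f}_r(\pi^{-1})$, and inverting both sides yields the displayed identity once $(\textsf{f}_s(\sigma^{-1}))^{-1}$ is recognized as $\bar{\textsf{f}}_s(\sigma)$. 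With this in hand the conclusion is immediate: if $\{\pi,\rho\}$ is an edge of $\Cat$, then $\rho=\pi\circ\sigma$ for some $\sigma\in T_n$, so $\bar{\textsf{f}}_r(\rho)=\bar{\textsf{f}}_r(\pi)\circ\bar{\textsf{f}}_s(\sigma)$, and the factor $\bar{\textsf{f}}_s(\sigma)=(\textsf{f}_s(\sigma^{-1}))^{-1}$ lies in $T_n$ because $\sigma^{-1}\in T_n$, $T_n$ is toric-invariant by Corollary~\ref{th1}, and $T_n$ is inverse closed. Hence $\{\bar{\textsf{f}}_r(\pi),\bar{\textsf{f}}_r(\rho)\}$ is again an edge; as $\bar{\textsf{f}}_r$ is a bijection, it is an automorphism. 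As in Proposition~\ref{prop10oct} it suffices to treat $\bar{\textsf{f}}=\bar{\textsf{f}}_1$ and then invoke $\bar{\textsf{f}}_r=\bar{\textsf{f}}^{\,r}$ with induction on $r$.

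The reverse map I would handle identically, using Lemma~\ref{mat27oct}(ii) together with $\bar{\textsf{g}}=\textsf{g}$: from $\rho=\pi\circ\sigma$ and $\rho^{-1}=\sigma^{-1}\circ\pi^{-1}$ one obtains $\textsf{g}(\rho)=(\textsf{g}(\rho^{-1}))^{-1}=\textsf{g}(\pi)\circ\textsf{g}(\sigma)$, and $\textsf{g}(\sigma)\in T_n$ by (\ref{eq1oct11}). Thus $\{\textsf{g}(\pi),\textsf{g}(\rho)\}$ is an edge of $\Cat$ and $\textsf{g}$ is an automorphism.

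The step needing the most care is the index bookkeeping in the right-invariant multiplication rule: one must track how the subscript passes from $r$ to $s=(\pi^{-1})_r$ upon inverting the identity of Lemma~\ref{mat27oct}(i), and check that $s$ is precisely the index for which $\bar{\textsf{f}}_s(\sigma)$ is a block transposition. Everything else is a routine transfer of the $\Cay$ argument, and the conjugation viewpoint of the first paragraph provides an independent sanity check that no edge relation is lost in the transition to the right-invariant graph.
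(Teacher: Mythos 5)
Your proposal is correct, and it in fact contains two proofs. Your ``self-contained'' second route is essentially the paper's own argument: the paper takes an edge $\{\pi,\rho\}$ of $\Cat$ with $\rho=\pi\circ\sigma$, computes $\bar{\textsf{f}}(\pi\circ \sigma)=(\textsf{f}(\sigma^{-1}\circ \pi^{-1}))^{-1}=\bar{\textsf{f}}(\pi)\circ\textsf{f}_{\pi_1^{-1}}(\sigma^{-1})^{-1}$ via Lemma \ref{mat27oct}(i), then observes that the right-hand factor lies in $T_n$ by inverse closure (\ref{eqa18oct}) and Corollary \ref{th1}; this is precisely your multiplication rule at $r=1$, your subscript $s=(\pi^{-1})_r$ matching the paper's $\pi_1^{-1}$, and the reverse map is likewise dispatched through Lemma \ref{mat27oct}(ii) and Corollary \ref{th1}. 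Your first route, conjugation by the inversion map, is genuinely different from the paper's proof and is cleaner: writing $\textsf{v}$ for the map $\pi\mapsto\pi^{-1}$, formula (\ref{March29+}) says $\bar{\textsf{f}}_r=\textsf{v}\circ\textsf{f}_r\circ\textsf{v}$, the paper's identity $\textsf{g}(\pi^{-1})^{-1}=\textsf{g}(\pi)$ says $\textsf{g}=\textsf{v}\circ\textsf{g}\circ\textsf{v}$, and $\textsf{v}$ is an isomorphism between $\Cay$ and $\Cat$ (as recorded at the start of Section \ref{s5}); hence the proposition follows formally from Proposition \ref{prop10oct} with no index bookkeeping at all, and the same one-line reasoning shows that every automorphism statement of Section \ref{cayleyleft} transfers to $\Cat$, a fact the paper asserts only after its computational proof. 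What the computational route buys, and presumably why the paper takes it, is the explicit factorization $\bar{\textsf{f}}_r(\pi\circ\sigma)=\bar{\textsf{f}}_r(\pi)\circ\bar{\textsf{f}}_s(\sigma)$ exhibiting the block transposition joining the image vertices, which is the form used in the later computations on $\bar{\Gamma}$. Both routes are sound.
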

\begin{proof}{}From Lemma \ref{mat27oct} (ii) and Corollary \ref{th1} follows that the reverse map $\textsf{g}$ is also an automorphism of $\Cat$.
 
Now, it suffices to prove the claim for $\bar{\textsf{f}}.$ Take an edge $\{\pi,\rho\}$ of $\Cat$. Then $\rho=\pi\circ\sigma$ with $\sigma\in T_n$, and
$$\bar{\textsf{f}}(\pi\circ \sigma)=(\textsf{f}(\sigma^{-1}\circ \pi^{-1}))^{-1}=(\textsf{f}_{\pi_1^{-1}}(\sigma^{-1})\circ\textsf{f}(\pi^{-1}))^{-1}=
\bar{\textsf{f}}(\pi)\circ\textsf{f}_{\pi_1^{-1}}(\sigma^{-1})^{-1},$$by Lemma \ref{mat27oct} (i). Here $\textsf{f}_{\pi_1^{-1}}(\sigma^{-1})^{-1}\in T_n$ since $T_n$ is inverse closed, by (\ref{eqa18oct}), and $\textsf{F}$ leaves $T_n$ invariant, by Corollary \ref{th1}. Therefore, $\bar{\textsf{f}}(\pi)$ and $\bar{\textsf{f}}(\rho)$ are incident in $\Cat$.
\end{proof}
As consequence of Proposition \ref{22march22D2015}, all the results in Section \ref{cayleyleft} hold true for $\Cat$ up to the obvious change ``right-translation'' to ``left-translation''.

Now, we state some results on the components of the partition in Lemma \ref{lem1oct9}.

Since $B$ consists of all nontrivial elements of a subgroup of $T_n$ of order $n$, the block transpositions in $B$ are the vertices of a complete graph  of size $n-1$. Lemma \ref{lem1oct9} and (\ref{eq1oct11}) give the following property.
\begin{cor}\label{lem2oct11}
The reverse map preserves both $B$ and $S_{n-2}^\vartriangle$ while it switches $L$ and $F$.
\end{cor}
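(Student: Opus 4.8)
The plan is to prove Corollary \ref{lem2oct11} by direct appeal to the explicit formula for the reverse map on block transpositions, namely equation (\ref{eq1oct11}), which states $\textsf{g}(\sigma(i,j,k))=\sigma(n-k,n-j,n-i)$. Since $\textsf{g}$ is an involution, it suffices to check that each of the three sets $B$, $S_{n-2}^\vartriangle$, and the pair $\{L,F\}$ is handled correctly under the substitution $(i,j,k)\mapsto(n-k,n-j,n-i)$. The key observation is that this substitution sends the condition $i=0$ to the condition $n-k=0$, i.e. $k=n$, and vice versa; more generally it exchanges the role of the left cut point $i$ with the complementary right cut point $n-k$.

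First I would treat $B$. Recall $B$ consists of the nontrivial powers of $\beta=\sigma(0,1,n)$, and by the remark following (\ref{eqa18oct}) these are exactly the block transpositions $\sigma(0,j,n)$ with $1\le j\le n-1$. Applying (\ref{eq1oct11}) gives $\textsf{g}(\sigma(0,j,n))=\sigma(0,n-j,n)$, which is again of this form since $1\le n-j\le n-1$. Hence $\textsf{g}$ maps $B$ into itself, and being an involution it preserves $B$. Next I would treat $S_{n-2}^\vartriangle$, the set of block transpositions on $\{2,\ldots,n-1\}$, characterized by $i\neq 0$ and $k\neq n$, i.e. $1\le i$ and $k\le n-1$. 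Under $(i,j,k)\mapsto(n-k,n-j,n-i)$ the first coordinate becomes $n-k\ge 1$ and the third becomes $n-i\le n-1$, so both defining inequalities are preserved; thus $\textsf{g}$ maps $S_{n-2}^\vartriangle$ into itself and, again by the involution property, preserves it.

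Finally I would treat the pair $\{L,F\}$. By the description following Lemma \ref{lem1oct9}, $L$ is the set of all $\sigma(0,j,k)$ with $k\neq n$, so its defining conditions are $i=0$ and $k\le n-1$; dually, $F$ is the set of all $\sigma(i,j,n)$ with $i\neq 0$, defined by $i\ge 1$ and $k=n$. Applying (\ref{eq1oct11}) to an element $\sigma(0,j,k)\in L$ gives $\sigma(n-k,n-j,n)$, whose first coordinate $n-k\ge 1$ and whose third coordinate equals $n$; this is precisely an element of $F$. Thus $\textsf{g}(L)\subseteq F$, and since $\textsf{g}$ is an involution the reverse inclusion $\textsf{g}(F)\subseteq L$ follows automatically, so $\textsf{g}$ switches $L$ and $F$. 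Together these three verifications establish the corollary.

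I do not anticipate a genuine obstacle here, since everything reduces to bookkeeping with the linear substitution supplied by (\ref{eq1oct11}); the only point demanding care is to phrase each set by its defining inequalities on the cut points (as read off from the text after Lemma \ref{lem1oct9} and the definition of $S_{n-2}^\vartriangle$) and then confirm that the map $(i,j,k)\mapsto(n-k,n-j,n-i)$ acts on those inequalities as claimed. Invoking that $\textsf{g}$ is an involution—established at (\ref{eq2oct9b})—to upgrade the one-sided inclusions $\textsf{g}(L)\subseteq F$ and the stability of $B$, $S_{n-2}^\vartriangle$ to genuine equalities is the small conceptual step that keeps the argument short.
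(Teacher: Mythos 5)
Your approach is exactly the paper's: the text presents this corollary as an immediate consequence of Lemma \ref{lem1oct9} and formula (\ref{eq1oct11}), $\textsf{g}(\sigma(i,j,k))=\sigma(n-k,n-j,n-i)$, and your verifications for $B$, for $S_{n-2}^\vartriangle$, and for the inclusion $\textsf{g}(L)\subseteq F$ are all correct. There is, however, one logical slip in your last step. For a set mapped into itself the involution property does upgrade inclusion to equality: from $\textsf{g}(B)\subseteq B$ and $\textsf{g}^2=\mathrm{id}$ you get $B=\textsf{g}(\textsf{g}(B))\subseteq\textsf{g}(B)$, hence $\textsf{g}(B)=B$, and likewise for $S_{n-2}^\vartriangle$. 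But for the swap this inference is not valid: from $\textsf{g}(L)\subseteq F$, applying $\textsf{g}$ yields only $L\subseteq\textsf{g}(F)$, i.e.\ every element of $L$ is the image of some element of $F$, which is weaker than the needed $\textsf{g}(F)\subseteq L$. In general an involution can map $L$ into $F$ while sending part of $F$ elsewhere: on $\{1,2,3,4\}$ let $\textsf{g}$ swap $1\leftrightarrow 2$ and $3\leftrightarrow 4$, and take $L=\{1\}$, $F=\{2,3\}$; then $\textsf{g}(L)\subseteq F$ but $\textsf{g}(F)=\{1,4\}\not\subseteq L$.

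The gap is trivial to close in either of two ways. Either repeat your one-line computation on an element of $F$: for $i\ge 1$, formula (\ref{eq1oct11}) gives $\textsf{g}(\sigma(i,j,n))=\sigma(0,n-j,n-i)$ with third cut point $n-i\le n-1$, so $\textsf{g}(F)\subseteq L$ directly. Or use the cardinality count stated after Lemma \ref{lem1oct9}, $|L|=|F|=(n-1)(n-2)/2$: since $\textsf{g}$ is injective, $\textsf{g}(L)\subseteq F$ forces $\textsf{g}(L)=F$, and then $\textsf{g}(F)=\textsf{g}(\textsf{g}(L))=L$. With either patch your argument is complete and coincides with the proof the paper intends.
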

\begin{lem}\label{lem1oct11}
No edge of $\Cat$ has one endpoint in $B$ and the other in $S_{n-2}^\vartriangle$.
\end{lem}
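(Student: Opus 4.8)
\section*{Proof proposal}

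The plan is to reduce the claim to evaluating a single candidate permutation at the two extreme positions $1$ and $n$. Since $\Cat$ is undirected and $T_n$ is inverse closed, an edge $\{b,s\}$ with $b\in B$ and $s\in S_{n-2}^\vartriangle$ is equivalent to the existence of a block transposition $\sigma\in T_n$ with $s=b\circ\sigma$, that is, $\sigma=b^{-1}\circ s$. So I would fix $b\in B$ and $s\in S_{n-2}^\vartriangle$, put $\sigma:=b^{-1}\circ s$, and argue that $\sigma\in T_n$ forces $s=\iota$, which is impossible since $s$ is a block transposition.

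First I would isolate the two facts that make everything work. Because $s\in S_{n-2}^\vartriangle$ has cut points $(i,j,k)$ with $i\ge 1$ and $k\le n-1$, equation (\ref{feb9}) shows that $s$ fixes the endpoints, i.e. $s_1=1$ and $s_n=n$. Because $b\in B$ is a nontrivial power of $\beta=\sigma(0,1,n)$, the formulas in (\ref{eqa18oct}) give $b^{-1}=\sigma(0,p,n)=\beta^{p}$ for some $p$ with $1\le p\le n-1$, and then (\ref{eq22ott12}) yields the explicit form $\beta^{p}=[p+1\,\cdots\,n\,\,1\,\cdots\,p]$, whence $\beta^{p}_1=p+1$ and $\beta^{p}_n=p$.

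Combining these, $\sigma_1=b^{-1}(s_1)=\beta^{p}_1=p+1$ and $\sigma_n=b^{-1}(s_n)=\beta^{p}_n=p$. Now suppose $\sigma=\sigma(i',j',k')\in T_n$. By (\ref{feb9}) and (\ref{cuppoints}) one has $\sigma_1=1$ if $i'\ge 1$ and $\sigma_1=j'+1$ if $i'=0$; since $\sigma_1=p+1>1$ this forces $i'=0$ and $j'=p$. Likewise $\sigma_n=n$ if $k'\le n-1$ and $\sigma_n=j'$ if $k'=n$; since $\sigma_n=p<n$ this forces $k'=n$ and again $j'=p$. Hence $\sigma=\sigma(0,p,n)=\beta^{p}=b^{-1}$, so $s=b\circ\sigma=\iota$, a contradiction.

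The argument is essentially a one-line computation, so there is no genuine obstacle; the only thing that needs care is the final case analysis, and the real content is the observation that the values at positions $1$ and $n$ already determine all three cut points of a block transposition in this situation. This is why those two endpoints are the right invariants: the value at position $1$ simultaneously forces $i'=0$ and reads off $j'$, while the value at position $n$ independently forces $k'=n$ and confirms $j'$, leaving $\beta^{p}$ as the unique possibility.
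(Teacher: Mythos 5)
Your proof is correct and takes essentially the same approach as the paper: both reduce the edge condition to the statement that the composite $b^{-1}\circ s$ lies in $T_n$ and then exploit its values at the extreme positions $1$ and $n$ (using $s_1=1$, $s_n=n$ and the explicit form of $b^{-1}=\sigma(0,p,n)$) to force the cut points $i'=0$ and $k'=n$. The only difference is the finish: the paper stops at $b^{-1}\circ s\in B$ and invokes the fact that $B\cup\{\iota\}$ is a group to conclude $s\in B\cup\{\iota\}$, contradicting the Partition Lemma, whereas you read off $j'=p$ from both endpoints to identify $b^{-1}\circ s$ as exactly $b^{-1}$, giving $s=\iota$ directly --- a marginally more self-contained ending of the same argument.
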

\begin{proof}
Suppose on the contrary that $\{\sigma(i',j',k'),\sigma(0,j,n)\}$ with $i'\neq 0$ and $k'\neq n$ is an edge of $\Cat$. By (\ref{eqa18oct}),
$\rho=\sigma(0,n-j,n)\circ \sigma(i',j',k')\in T_n$. Also, $\rho\in B$ as $\rho_1\neq 1$ and $\rho_n\neq n$. Since $B$ together with the identity is a group, $\sigma(0,j,n)\circ\rho$ is also in $B$. This yields $\sigma(i',j',k')\in B$, a contradiction with Lemma \ref{lem1oct9}.
\end{proof}
The proofs of the subsequent properties use a few more equations involving block transpositions which are stated in the following two lemmas.
\begin{lem}\label{lem2oct13}
In each of the following cases $\{\sigma(i,j,k),\sigma(i',j',k')\}$ is an edge of $\Cat$.
\begin{itemize}
\item[\rm(i)]$(i',j')=(i,j);$
\item[\rm(ii)]$(i',j')=(j,k)$ for $k<k';$
\item[\rm(iii)]$(j',k')=(j,k)$;
\item[\rm(iv)]$(j',k')=(i,j)$ for $i'<i;$
\item[\rm(v)]$(i,k)=(i',k')$ for $j<j'$.
\end{itemize}
\end{lem}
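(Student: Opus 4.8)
The plan is to verify each of the five cases by exhibiting an explicit block transposition $\tau\in T_n$ with $\sigma(i',j',k')=\sigma(i,j,k)\circ\tau$, since in $\Cat$ the pair $\{\sigma(i,j,k),\sigma(i',j',k')\}$ is an edge precisely when such a $\tau$ exists. The guiding principle is that composing $\sigma(i,j,k)$ on the right with $\tau=\sigma(a,b,c)$ simply swaps the two adjacent blocks of the sequence $\sigma(i,j,k)$ cut out at positions $a,b,c$. So I would choose $\tau$ so that $a,b,c$ land on the segment boundaries of $\sigma(i,j,k)$, namely positions $i$, $k-j+i$, and $k$ (see \eqref{eq22ott12} and \eqref{cuppoints}); the swapped sequence can then be read off directly.

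First I would dispatch case (v) from the subgroup structure recorded in Section \ref{defs}: for fixed $i,k$ the block transpositions $\sigma(i,j,k)$ together with $\iota$ form the cyclic group generated by $\sigma(i,i+1,k)$, so by \eqref{eqa18oct} one gets $\sigma(i,j,k)^{-1}\circ\sigma(i,j',k)=\sigma(i,i+j'-j,k)\in T_n$ whenever $j<j'$, yielding the edge at once.

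Next I would treat case (i) directly (assuming $k<k'$, which is no loss since the edge relation is symmetric). Writing $\sigma(i,j,k)=[1\cdots i|j+1\cdots k|i+1\cdots j|k+1\cdots n]$ and taking $\tau=\sigma(k-j+i,k,k')$, the two swapped blocks are the third segment $i+1\cdots j$ at positions $[k-j+i+1,k]$ and the initial part $k+1\cdots k'$ of the fourth segment; the result merges $j+1\cdots k$ with $k+1\cdots k'$ to give $\sigma(i,j,k')$. Case (ii) then follows algebraically: applying case (i) to the pair $\sigma(i,k-j+i,k)$, $\sigma(i,k-j+i,k')$, which share their first two cut points, gives $\sigma(i,k-j+i,k')=\sigma(i,k-j+i,k)\circ\sigma(j,k,k')$; since $\sigma(i,k-j+i,k)=\sigma(i,j,k)^{-1}$ by \eqref{eqa18oct}, left-multiplication by $\sigma(i,j,k)$ produces $\sigma(j,k,k')=\sigma(i,j,k)\circ\sigma(i,k-j+i,k')$ with $\sigma(i,k-j+i,k')\in T_n$. (Alternatively $\tau=\sigma(i,k-j+i,k')$ can be checked by the same block-swap, its cut points again falling on the boundaries of $\sigma(i,j,k)$.)

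Finally, cases (iii) and (iv) would come for free from (i) and (ii) by applying the reverse map $\textsf{g}$, an automorphism of $\Cat$ by Proposition \ref{22march22D2015} acting on cut points by $\sigma(i,j,k)\mapsto\sigma(n-k,n-j,n-i)$ (see \eqref{eq1oct11}). Since $\textsf{g}$ reverses the order of the three cut points, it sends a case (i) edge (first two cut points shared) to a case (iii) edge (last two shared) and a case (ii) edge to a case (iv) edge, the inequality $k<k'$ turning into $i'<i$; as $\textsf{g}$ is an involutive bijection of $T_n$, the full generality of (iii) and (iv) is attained. I expect the main obstacle to be the correct choice of the connecting transposition in (i) and (ii): the naive candidate (for instance $\sigma(i,j,k')$ in case (ii)) is in general wrong and only happens to succeed when $j=i+1$, so one must select $\tau$ whose cut points align with the segment boundaries of $\sigma(i,j,k)$ and then confirm that the block-swap still reads off the intended target in the degenerate ranges $i=0$, $k=n$, and $k'=n$, where some segments of $\sigma(i,j,k)$ are empty.
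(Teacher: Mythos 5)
Your proof is correct, but it overlaps with the paper's argument only in case (i); the remaining cases are handled quite differently. The paper's proof is a uniform direct verification: for each of the five cases it exhibits one explicit factorization read off from (\ref{eq22ott12}), namely $\sigma(i,j,k)=\sigma(i,j,k')\circ\sigma(k'-j+i,k',k)$ for (i) (your identity with $k$ and $k'$ interchanged), $\sigma(i,j,k)=\sigma(j,k,k')\circ\sigma(i,k'-k+j,k')$ for (ii), $\sigma(i,j,k)=\sigma(i',j,k)\circ\sigma(i',k-j+i',k-j+i)$ for (iii), $\sigma(i,j,k)=\sigma(i',i,j)\circ\sigma(i',j-i+i',k)$ for (iv), and an analogous equation for (v). You instead verify only (i) by the block-swap computation, derive (ii) from (i) by the inverse manipulation (your connector $\sigma(i,k-j+i,k')$ is exactly the inverse of the paper's $\sigma(i,k'-k+j,k')$, by (\ref{eqa18oct})), derive (v) from the cyclic subgroup $\langle\sigma(i,i+1,k)\rangle$, and transport (i) and (ii) to (iii) and (iv) through the reverse map; this is legitimate and non-circular, since Proposition \ref{22march22D2015} and (\ref{eq1oct11}) are established before this lemma, and the paper itself exploits the same symmetry elsewhere (e.g.\ Corollary \ref{lem2oct11} and Proposition \ref{prova}). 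What your route buys is economy and robustness against index slips, since only one explicit composition must be checked; what the paper's route buys is a ready explicit connecting block transposition in every case, with no reliance on the automorphism machinery. Incidentally, your treatment of (v) also corrects what appears to be a typo in the paper's fifth identity: with $j<j'$ the paper's connector $\sigma(i,k-j+j',k)$ has middle cut point $k-j+j'>k$, so it should read $\sigma(i,k-j'+j,k)$, which is precisely the inverse of your $\sigma(i,i+j'-j,k)$.
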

\begin{proof}
(i) W.l.g. $k'<k$. By (\ref{eq22ott12}), $\sigma(i,j,k)=\sigma(i,j,k')\circ\sigma(k'-j+i,k',k).$
(iii) W.l.g. $i'<i$. From (\ref{eq22ott12}),
$\sigma(i,j,k)=\sigma(i',j,k)\circ\sigma(i',k-j+i',k-j+i).$

In the remaining cases, from (\ref{eq22ott12}),
$$\begin{array}{lll}
\sigma(i,j,k)=\sigma(j,k,k')\circ\sigma(i,k'-k+j,k'),\\
\sigma(i,j,k)=\sigma(i',i,j)\circ\sigma(i',j-i+i',k),\\
\sigma(i,j,k)=\sigma(i,j',k)\circ\sigma(i,k-j+j',k).
\end{array}$$Hence the statements hold.\end{proof}
The proof of the lemma below is straightforward and requires only (\ref{eq22ott12}).
\begin{lem}
\label{prop1oct12}
The following equations hold.
\begin{itemize}
\item[\rm(i)]$\sigma(i,j,n)=\sigma(0,j,n)\circ\sigma(0,n-j,n-j+i)$ for $i\neq 0;$
\item[\rm(ii)]$\sigma(i,j,n)=\sigma(0,i,j)\circ\sigma(0,j-i,n)$ for $i\neq 0;$
\item[\rm(iii)]$\sigma(0,j,n)=\sigma(i,j,n)\circ\sigma(0,i,n-j+i);$
\item[\rm(iv)]$\sigma(0,j,n)=\sigma(0,j,j+i)\circ\sigma(i,j+i,n)$ for $i\neq 0.$
\end{itemize}
\end{lem}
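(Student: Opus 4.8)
The plan is to verify each identity by the most elementary route, exactly as the remark preceding the statement suggests: read off both sides as explicit words through (\ref{eq22ott12}) and exploit the fact that the right action $\pi\circ\sigma(i',j',k')$ merely interchanges the two adjacent blocks of $\pi$ occupying positions $i'+1,\dots,j'$ and $j'+1,\dots,k'$. For each equation I would write the left-hand factor of the product as a word via (\ref{eq22ott12}), perform the block interchange prescribed by the second factor, and then recognise the resulting word as the block transposition on the left, again by (\ref{eq22ott12}). No appeal to the piecewise formula (\ref{feb9}) is needed.

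Concretely, for (i) I would take $\pi=\sigma(0,j,n)=[j+1\cdots n\,1\cdots j]$ and let $\sigma(0,n-j,n-j+i)$ act on the right. Its cut points $(0,n-j,n-j+i)$ move the length-$i$ block in positions $n-j+1,\dots,n-j+i$ to the front; these positions of $\pi$ carry the entries $1,\dots,i$, so the product is $[1\cdots i\,j+1\cdots n\,i+1\cdots j]$, which is $\sigma(i,j,n)$ by the third line of (\ref{eq22ott12}). Identity (iv) is handled the same way: with $\pi=\sigma(0,j,j+i)=[j+1\cdots j+i\,1\cdots j\,j+i+1\cdots n]$, letting $\sigma(i,j+i,n)$ act on the right returns the word $[j+1\cdots n\,1\cdots j]=\sigma(0,j,n)$.

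The remaining two identities need not be verified from scratch; they follow from (i) by the inverse rule (\ref{eqa18oct}), which noticeably shortens the casework. Right-multiplying (i) by $\sigma(0,n-j,n-j+i)^{-1}=\sigma(0,i,n-j+i)$ gives (iii) at once. Taking inverses of both sides of (i) and relabelling $n-j+i$ as the new middle cut point turns (i) into an instance of (ii); solving that instance for its second factor by left-multiplying with the inverse of its first factor, via a further application of (\ref{eqa18oct}), reproduces (iv). Thus a single direct computation together with the involutory behaviour of inversion on $T_n$ accounts for all four equations, although one may of course prefer to verify each of them directly.

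There is no genuine obstacle here: the content is purely combinatorial bookkeeping. The only care required is to select the correct line of the four-case formula (\ref{eq22ott12}) for each factor — according to whether its first cut point is $0$ and whether its last equals $n$ — and to track which entries occupy the interchanged positions. The main practical risk is an off-by-one slip in the endpoint indices; since the hypothesis $i\neq0$ and the constraints $0\le i<j<k\le n$ guarantee that every factor appearing is a genuine block transposition, no degenerate (empty-block) configuration actually arises, and I would simply double-check the boundary indices separately in each case.
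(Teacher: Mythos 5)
Your proposal is correct and matches the paper's approach: the paper gives no written proof, stating only that the lemma ``is straightforward and requires only (\ref{eq22ott12})'', which is exactly the direct word-by-word verification you carry out for (i) and (iv), and your computations there check out. Your additional shortcut --- obtaining (ii) and (iii) from (i) by inversion via (\ref{eqa18oct}), with the index ranges transforming correctly under the relabelling $j\mapsto n-j+i$ --- is a mild economy rather than a different route, and is equally valid.
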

\begin{lem}
\label{leaoct19} Let $i$ be an integer with $0<i\leq n-2$.
\begin{itemize}
\item[\rm(i)] If $\sigma(i,j,n)=\sigma(0,\bar{j},n)\circ\sigma(i',j',k'),$ then $\bar{j}=j.$
\item[\rm(ii)] If $\sigma(i,j,n)=\sigma(i',j',k')\circ \sigma(0,\bar{j},n),$ then $\bar{j}=i-j.$
\end{itemize}
\end{lem}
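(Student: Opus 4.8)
The plan is to argue directly from the explicit permutation representation (\ref{eq22ott12}) together with the composition rule $(\pi\circ\rho)_t=\pi_{\rho_t}$, reading off the images of a few distinguished positions. For $i>0$ the permutation $\sigma(i,j,n)$ satisfies $\sigma(i,j,n)_1=1$ and $\sigma(i,j,n)_n=j$, and by (\ref{cuppoints}) its unique position carrying the value $j+1$ is $i+1$; these three facts will carry the whole argument. I would also record one structural observation: by (\ref{eqa18oct}) the nontrivial powers of $\beta=\sigma(0,1,n)$, namely the elements of $B$, together with $\iota$ form a subgroup, and every element of $B\cup\{\iota\}$ has first cut point $0$; hence, by Lemma \ref{lem1oct9}, no $\sigma(i,j,n)$ with $i>0$ belongs to $B\cup\{\iota\}$.

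For part (i), evaluate $\sigma(i,j,n)=\sigma(0,\bar{j},n)\circ\sigma(i',j',k')$ at position $1$. If $i'>0$ then $\sigma(i',j',k')_1=1$, whence the right-hand side equals $\sigma(0,\bar{j},n)_1=\bar{j}+1>1$, a contradiction; so $i'=0$. Next I would exclude $k'=n$: otherwise both factors lie in $B\cup\{\iota\}$, so does their product, contradicting the structural observation. Thus $k'<n$, so the tail of $\sigma(0,j',k')$ is fixed and $\sigma(i',j',k')_n=n$; evaluating at position $n$ now gives $\sigma(0,\bar{j},n)_n=\bar{j}$ on one side and $j$ on the other, so $\bar{j}=j$.

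For part (ii) I would reduce to part (i) by inverting. Taking inverses in $\sigma(i,j,n)=\sigma(i',j',k')\circ\sigma(0,\bar{j},n)$ and using (\ref{eqa18oct}) gives
$$\sigma(i,n-j+i,n)=\sigma(i,j,n)^{-1}=\sigma(0,n-\bar{j},n)\circ\sigma(i',j',k')^{-1},$$
where $\sigma(i,n-j+i,n)$ is again a block transposition with first cut point $i$, middle cut point $n-j+i$, and last cut point $n$, and $\sigma(i',j',k')^{-1}\in T_n$ by inverse-closure. This is precisely an instance of part (i), so the corresponding value $n-\bar{j}$ must equal the middle cut point $n-j+i$; hence $\bar{j}=j-i$, the value already realised by the factorization in Lemma \ref{prop1oct12}(ii). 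The main obstacle is part (i), and within it the exclusion of the degenerate case $k'=n$: without invoking the subgroup $B$ and the Partition Lemma one cannot rule out a spurious factorization living entirely inside $B\cup\{\iota\}$, and the clean conclusion $\bar{j}=j$ from the last position depends on having first forced $k'<n$. The remaining steps are routine bookkeeping with the four cases of (\ref{eq22ott12}), checking that the quoted entries carry the stated values and that all indices stay in range thanks to $0<i\le n-2$.
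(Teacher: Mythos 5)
Your proof is correct, and part (i) is organized genuinely differently from the paper's. The paper argues by contradiction: assuming $\bar{j}\neq j$, it invokes Lemma \ref{prop1oct12}(i) to rewrite $\sigma(i,j,n)$ and isolate $\sigma(i',j',k')=\sigma(0,j^*,n)\circ\sigma(0,n-j,n-j+i)$ with $j^*\equiv j-\bar{j}\pmod{n}$; it then forces $i'=0$ by a position-$1$ evaluation (as you do), forces $k'=n$ by a position-$n$ evaluation, and finally concludes that $\sigma(0,n-j,n-j+i)$ would lie in $B$, which is impossible. You work directly on the given equation with no detour through Lemma \ref{prop1oct12}(i): position $1$ forces $i'=0$; the subgroup property of $B\cup\{\iota\}$ plus the Partition Lemma excludes $k'=n$ (the paper uses that same subgroup property, but only at the very end, to manufacture its contradiction); and then the position-$n$ evaluation reads off $\bar{j}=j$ outright. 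Your arrangement is leaner --- it trades the paper's auxiliary factorization and global contradiction for two short local exclusions --- while the computational ingredients (the entries $\sigma(i,j,n)_1=1$, $\sigma(i,j,n)_n=j$, and the group generated by $\beta$) are the same. Part (ii) is handled identically in both: invert, apply (\ref{eqa18oct}), and quote part (i). One point worth flagging: both your argument and the paper's own proof of (ii) arrive at $n-\bar{j}=n-j+i$, i.e.\ $\bar{j}=j-i$, which is consistent with the explicit factorization $\sigma(i,j,n)=\sigma(0,i,j)\circ\sigma(0,j-i,n)$ of Lemma \ref{prop1oct12}(ii); the value $\bar{j}=i-j$ printed in the statement is evidently a sign typo, so your conclusion agrees with the paper's proof rather than with its misstated claim.
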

\begin{proof} (i) Assume $\bar{j}\neq j.$ From Lemma \ref{prop1oct12} (i) and (\ref{eqa18oct}),
\begin{equation}\label{matoct25}
\sigma(i',j',k')=\sigma(0,j^*,n)\circ\sigma(0,n-j,n-j+i),
\end{equation}where $j^*$ denotes the smallest positive integer such that $j^*\equiv j-\bar{j} \pmod n$. First we prove $i'=0$. Suppose on the contrary, then $$(\sigma(0,j^*,n)\circ\sigma(0,n-j,n-j+i))_1=1.$$On the other hand, $\sigma(0,n-j,n-j+i)_1=n-j+1$ and $\sigma(0,j^*,n)_{n-j+1}=n-\bar{j}+1$ since
$\sigma(0,j^*,n)_t=t+j^*\pmod n$ by (\ref{feb9}). Thus, $n-\bar{j}+1=1,$ a contradiction since $\bar{j}<n$.

Now, from (\ref{matoct25}), $\sigma(0,j',k')_n\neq n.$ Hence $k'=n$. Therefore, $$\sigma(0,n-j,n-j+i)=\sigma(0,j+\bar{j},n)\circ \sigma(0,j',n)\in B.$$A contradiction since $i\neq j$. This proves the assertion.

(ii) Taking the inverse of both sides of the equation in (ii) gives by (\ref{eqa18oct})
$$\sigma(i,n-j+i,n)=\sigma(0,n-\bar{j},n)\circ \sigma(i',j',k')^{-1}.$$ Now, from (i), $n-\bar{j}=n-j+i$, and the assertion follows.
\end{proof}
\begin{prop}
\label{prova}
The bipartite graphs arising from the components of the partition in Lemma \ref{lem1oct9} have the following properties.
\begin{itemize}
\item[\rm(i)] In the bipartite subgraph $(L\cup F,B)$ of $\Cat$, every vertex in $L\cup F$ has degree $1$ while every vertex of $B$ has degree $n-2$.
\item[\rm(ii)] The bipartite subgraph $(L,F)$ of $\Cat$ is a $(1,1)$-biregular graph.
\end{itemize}
\end{prop}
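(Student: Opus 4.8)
The plan is to work throughout in $\Cat$, where two block transpositions $\sigma,\tau\in T_n$ are adjacent exactly when $\tau=\sigma\circ\mu$ for some $\mu\in T_n$; since $T_n$ is inverse closed by (\ref{eqa18oct}), this relation is symmetric and may be tested from either side. The recurring device will be the reverse map $\textsf{g}$, which is an automorphism of $\Cat$ by Proposition \ref{22march22D2015} and, by Corollary \ref{lem2oct11}, fixes $B$ setwise while interchanging $L$ and $F$; hence any degree statement proved on the $F$ side transfers verbatim to the $L$ side. I will also use $\textsf{g}(\sigma(0,j,n))=\sigma(0,n-j,n)$, which is immediate from (\ref{eq1oct11}).

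For (i), I would first treat the vertices of $L\cup F$. Fixing $\sigma(i,j,n)\in F$, note $0<i\le n-2$ because $0<i<j<n$, so Lemma \ref{leaoct19}(i) applies and forces any $B$-neighbour $\sigma(0,\bar j,n)$ to satisfy $\bar j=j$; thus $\sigma(i,j,n)$ has at most one neighbour in $B$, and Lemma \ref{prop1oct12}(i) exhibits it as $\sigma(0,j,n)$. So every $F$-vertex has degree $1$ into $B$, and applying $\textsf{g}$ gives the same for every $L$-vertex. For a vertex $\sigma(0,j,n)\in B$ I count the two sides separately. By the step just made, an $F$-vertex $\sigma(i,j',n)$ is adjacent to $\sigma(0,j,n)$ iff $j'=j$, so its $F$-neighbours are exactly the $\sigma(i,j,n)$ with $1\le i\le j-1$, giving $j-1$ of them. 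Transporting by $\textsf{g}$, the number of its $L$-neighbours equals the number of $F$-neighbours of $\textsf{g}(\sigma(0,j,n))=\sigma(0,n-j,n)$, namely $(n-j)-1$. Hence the degree of $\sigma(0,j,n)$ in $(L\cup F,B)$ is $(j-1)+(n-j-1)=n-2$, and the degree sums against $|L|=|F|=(n-1)(n-2)/2$ provide a consistency check (the boundary cases $j=1$ and $j=n-1$ behave correctly).

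For (ii), existence and a bijection arrive together: Lemma \ref{prop1oct12}(ii) gives $\sigma(i,j,n)=\sigma(0,i,j)\circ\sigma(0,j-i,n)$, so each $\sigma(i,j,n)\in F$ is adjacent to $\sigma(0,i,j)\in L$, and $\sigma(i,j,n)\mapsto\sigma(0,i,j)$ is a bijection $F\to L$. The real content is that there are no further $L$–$F$ edges, and here I would avoid a case analysis on the mutual order of $a,b,i,j$ by evaluating the defining identity only at the two extreme positions. Suppose $\sigma(i,j,n)=\sigma(0,a,b)\circ\mu$ with $\sigma(0,a,b)\in L$ (so $0<a<b<n$) and $\mu=\sigma(i',j',k')\in T_n$. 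Reading (\ref{feb9}) at $t=n$, the left side is $j<n$, whereas $k'<n$ would make $\mu$ fix $n$ and force the right side to be $\sigma(0,a,b)_n=n$; hence $k'=n$. Reading at $t=1$, the left side is $1$, whereas $i'\ne 0$ would give $\mu_1=1$ and force the right side to be $\sigma(0,a,b)_1=a+1>1$; hence $i'=0$, so $\mu=\sigma(0,j',n)\in B$. Now $\mu_1=j'+1$ with $\sigma(0,a,b)_{j'+1}=1$ forces $j'+1=b-a+1$, i.e. $\mu=\sigma(0,b-a,n)$, and Lemma \ref{prop1oct12}(ii) then identifies $\sigma(0,a,b)\circ\sigma(0,b-a,n)=\sigma(a,b,n)$. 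Thus $(a,b)=(i,j)$, so the only $L$-neighbour of $\sigma(i,j,n)$ is $\sigma(0,i,j)$; the $L$–$F$ edge set is exactly the matching above, and $(L,F)$ is $(1,1)$-biregular.

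The hard part is precisely this uniqueness in (ii): the temptation is to subdivide into the relative positions of the four cut points, which is long and error-prone. The positional trick is what keeps it short, since position $n$ pins down $k'=n$, position $1$ pins down $i'=0$ and collapses $\mu$ into $B$, after which Lemma \ref{prop1oct12}(ii) closes the argument; the reverse-map symmetry of Corollary \ref{lem2oct11} and Proposition \ref{22march22D2015} then spares me from repeating any of this for the $L$ side.
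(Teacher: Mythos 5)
Your proof is correct, and while it rests on the same pillars as the paper's own proof---Lemma \ref{prop1oct12} (i)/(ii) for existence of the relevant edges, Lemma \ref{leaoct19} (i) for uniqueness, and the reverse-map symmetry of Corollary \ref{lem2oct11} to pass between $L$ and $F$---it deviates at two points, in both cases to its advantage. For the $B$-side of (i), the paper only bounds the degree of $\sigma(0,j,n)$ from below ($j-1$ edges into $F$ via Lemma \ref{prop1oct12} (iii) and $n-j-1$ into $L$ via (iv)) and then caps the total by double counting against $|L\cup F|=(n-1)(n-2)$; you instead compute the degree exactly and locally: since each $F$-vertex's unique $B$-neighbour is read off from its middle cut point, the $F$-neighbours of $\sigma(0,j,n)$ are precisely the $\sigma(i,j,n)$ with $1\le i\le j-1$, and the $L$-count is transported by $\textsf{g}$. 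Thus you never need parts (iii), (iv) of Lemma \ref{prop1oct12} or the global count, and you get the finer degree distribution $(j-1,\,n-j-1)$ as a bonus. More significantly, in (ii) the paper's uniqueness claim is a bare citation of Lemma \ref{leaoct19} (ii), which only constrains factorizations $\sigma(i,j,n)=\sigma(i',j',k')\circ\sigma(0,\bar j,n)$ whose right-hand factor already lies in $B$; the paper nowhere argues that an arbitrary $L$--$F$ edge must have its connecting block transposition in $B$. Your evaluation at $t=n$ (forcing $k'=n$) and at $t=1$ (forcing $i'=0$) supplies exactly this missing reduction, after which you rederive the content of Lemma \ref{leaoct19} (ii) directly rather than citing it---incidentally sidestepping the sign slip in that lemma's statement, which asserts $\bar j=i-j$ where the computation (and your $\mu=\sigma(0,b-a,n)$) gives $\bar j=j-i$. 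So your argument for (ii) is not merely a variant but is strictly more complete than the one printed in the paper.
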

\begin{proof} (i) Lemma \ref{leaoct19} (i) together with Lemma \ref{prop1oct12} (i) show that every vertex in $F$ has degree $1$. Corollary \ref{lem2oct11} ensures
that this holds true for $L$.

For every $1\le j \le n-1$, Lemma  \ref{prop1oct12} (iii) shows that there exist at least $j-1$ edges incident with $\sigma(0,j,n)$ and a vertex in $F$. Furthermore, from Lemma  \ref{prop1oct12} (iv), there exist at least $n-j-1$ edges incident with $\sigma(0,j,n)$ and a vertex in $L$. Therefore, at least $n-2$ edges incident with $\sigma(0,j,n)$ have a vertex in $L\cup F$. On the other hand, this number cannot exceed $n-2$ since $|L\cup F|=(n-1)(n-2)$ from Lemma \ref{lem1oct9}. This proves the first assertion.

(ii) From Lemma  \ref{prop1oct12} (ii), there exists at least one edge with a vertex in $F$ and another in $L$. Also, Lemma \ref{leaoct19} (ii) ensures the uniqueness of such an edge.
\end{proof}
From now on, $\bar{\Gamma}(W)$ stays for the induced subgraph of $\bar{\Gamma}$ on the vertex-set $W$.
 \begin{cor}
\label{cor3oct13ter}
$B$ is the unique maximal clique of $\bar{\Gamma}$ of size $n-1$ containing an edge of $\bar{\Gamma}(B)$.
\end{cor}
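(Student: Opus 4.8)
=== PROOF PROPOSAL ===

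\textbf{Goal and setup.} We must show that $B$ is the \emph{unique} maximal clique of $\bar{\Gamma}$ of size $n-1$ that contains an edge lying inside $\bar{\Gamma}(B)$. Recall from the discussion preceding Lemma~\ref{lem1oct11} that $B$ together with $\iota$ forms a subgroup of order $n$, so the $n-1$ vertices of $B$ are pairwise adjacent in $\bar{\Gamma}$; that is, $B$ itself is a clique of size $n-1$. It is therefore a maximal clique of size $n-1$, and it trivially contains an edge of $\bar{\Gamma}(B)$. The content of the statement is the uniqueness: no \emph{other} clique of size $n-1$ can contain an edge whose two endpoints both lie in $B$.

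\textbf{Strategy.} The plan is to argue by contradiction. Suppose $K$ is a maximal clique of size $n-1$, distinct from $B$, that contains an edge $\{\beta_1,\beta_2\}$ with $\beta_1,\beta_2\in B$. Since $K\neq B$ and $|K|=|B|=n-1$, the clique $K$ must contain at least one vertex $x$ lying outside $B$. By Lemma~\ref{lem1oct9}, this $x$ belongs to one of $L$, $F$, or $S_{n-2}^\vartriangle$. The key point is that $x$, being in $K$, must be adjacent in $\bar{\Gamma}$ to \emph{both} endpoints $\beta_1$ and $\beta_2$ (indeed to every other vertex of $K$). I would then derive a contradiction by showing that no single vertex outside $B$ can be simultaneously adjacent to two distinct elements of $B$.

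\textbf{Carrying it out.} First, Lemma~\ref{lem1oct11} immediately rules out $x\in S_{n-2}^\vartriangle$, since no edge of $\bar{\Gamma}$ joins $B$ to $S_{n-2}^\vartriangle$, so such an $x$ could not even be adjacent to $\beta_1$. This confines $x$ to $L\cup F$. Now the decisive input is Proposition~\ref{prova}(i): in the bipartite subgraph $(L\cup F,\,B)$, \emph{every vertex of $L\cup F$ has degree exactly $1$}. Hence any $x\in L\cup F$ is adjacent to at most one element of $B$. But membership of $x$ in the clique $K$, which contains the two distinct elements $\beta_1,\beta_2\in B$, forces $x$ to be adjacent to both of them, contradicting the degree-$1$ bound. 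Therefore no vertex outside $B$ can belong to $K$, so $K\subseteq B$, and since $|K|=n-1=|B|$ we conclude $K=B$, contradicting $K\neq B$.

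\textbf{Main obstacle and remarks.} The argument is short because it rides on Proposition~\ref{prova}(i); the real work has already been absorbed into that proposition and into the Partition Lemma. The one point that needs care is the logical handling of the edge assumption: we must use that the hypothesised edge of $\bar{\Gamma}(B)$ has \emph{both} endpoints in $B$ and that these endpoints are \emph{distinct}, so that any extra vertex $x$ of $K$ is forced to have $B$-degree at least $2$. I would state explicitly that a clique containing $\beta_1,\beta_2\in B$ and any $x\notin B$ would require $x$ adjacent to both, which is exactly what Proposition~\ref{prova}(i) forbids. No genuinely hard step remains; the proposition's corollary status is appropriate.
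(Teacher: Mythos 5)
Your proof is correct and follows essentially the same route as the paper's: both reduce the statement to the observation that, by Proposition~\ref{prova}~(i) (degree $1$ from $L\cup F$ into $B$) and the exclusion of $S_{n-2}^\vartriangle$, no vertex outside $B$ can be a common neighbor of the two endpoints of an edge of $\bar{\Gamma}(B)$. If anything, you are slightly more explicit than the paper, which cites only Proposition~\ref{prova}~(i) and the Partition Lemma, whereas you correctly flag that Lemma~\ref{lem1oct11} is the ingredient ruling out neighbors in $S_{n-2}^\vartriangle$.
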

\begin{proof}
Proposition \ref{prova} (i) together with Lemma \ref{lem1oct9} show that the endpoints of an edge of $\bar{\Gamma}(B)$ do not have a common neighbor outside $B$.
\end{proof}
Computations performed by using the package ``grape'' of GAP \cite{gap} show that $\bar{\Gamma}$ is a $6$-regular subgraph for $n=5$ and $8$-regular subgraph for $n=6$, but $\bar{\Gamma}$ is only $3$-regular for $n=4$. This generalizes to the following result.
\begin{prop}\label{thm1oct11}
$\bar{\Gamma}$ is a $2(n-2)$-regular graph whenever $n\geq 5$.
\end{prop}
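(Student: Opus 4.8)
The plan is to argue by induction on $n$, taking as base cases $n=5$ and $n=6$, which are already settled by the computations quoted just before the statement ($\bar{\Gamma}$ is $6$- and $8$-regular there). For the inductive step I would exploit the three embeddings set up in Section~\ref{defs}. Writing $\bar{\Gamma}_m$ for the block transposition graph on a set of size $m$ (so $\bar{\Gamma}=\bar{\Gamma}_n$), the structural heart of the argument is that $\bar{\Gamma}_n$ carries three induced subgraphs isomorphic to smaller block transposition graphs: the subgraphs induced on $S_{n-1}$ and on $S_{n-1}^\triangledown$ are each isomorphic to $\bar{\Gamma}_{n-1}$, while the subgraph induced on $S_{n-2}^\vartriangle$ is isomorphic to $\bar{\Gamma}_{n-2}$. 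By the inductive hypothesis these are $2(n-3)$-, $2(n-3)$- and $2(n-4)$-regular, respectively.

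To establish these isomorphisms I would use the observation, immediate from (\ref{eq22ott12}) and (\ref{cuppoints}), that $\sigma(i,j,k)$ fixes the point $n$ exactly when $k\neq n$ and fixes the point $1$ exactly when $i\neq 0$. Thus $S_{n-1}$ consists of the block transpositions fixing $n$, $S_{n-1}^\triangledown$ of those fixing $1$, and $S_{n-2}^\vartriangle$ of those fixing both. If $\sigma,\tau\in S_{n-1}$ then $\sigma^{-1}\tau$ fixes $n$, so $\sigma^{-1}\tau\in T_n$ forces $\sigma^{-1}\tau\in S_{n-1}$; restricting to $[n-1]$ then shows that adjacency of $\sigma$ and $\tau$ in $\bar{\Gamma}_n$ coincides with their adjacency in the block transposition graph on $[n-1]$, and the same works for the other two embeddings.

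It then remains to count, for each vertex, the edges that leave the relevant embedded subgraph, organised by the partition $T_n=B\cupdot L\cupdot F\cupdot S_{n-2}^\vartriangle$ of Lemma~\ref{lem1oct9}. A vertex of $B$ lies in none of the three subgraphs: it has $n-2$ neighbours inside $B$ (which induces a complete graph of order $n-1$), exactly $n-2$ in $L\cup F$ by Proposition~\ref{prova}(i), and none in $S_{n-2}^\vartriangle$ by Lemma~\ref{lem1oct11}, giving degree $2(n-2)$. A vertex of $L$ lies in $S_{n-1}$ only, hence has $2(n-3)$ neighbours there; Proposition~\ref{prova}(i) adds its unique neighbour in $B$ and Proposition~\ref{prova}(ii) its unique neighbour in $F$, for total $2(n-2)$, and the case of $F$ follows by the reverse map of Corollary~\ref{lem2oct11}. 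A vertex of $S_{n-2}^\vartriangle$ has no neighbour in $B$ (Lemma~\ref{lem1oct11}); it has $2(n-3)$ neighbours in $S_{n-1}=L\cupdot S_{n-2}^\vartriangle$, $2(n-3)$ in $S_{n-1}^\triangledown=F\cupdot S_{n-2}^\vartriangle$, and $2(n-4)$ inside $S_{n-2}^\vartriangle$, so it has $2(n-3)-2(n-4)=2$ neighbours in each of $L$ and $F$, whence degree $2+2+2(n-4)=2(n-2)$. As a consistency check, one can instead read the neighbours off directly from Lemma~\ref{lem2oct13}: its families (i)--(v) contribute $n-j-1$, $n-k$, $j-1$, $i$ and $k-i-2$ neighbours of $\sigma(i,j,k)$, again summing to $2(n-2)$.

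The hard part will be the full justification of the three induced-subgraph isomorphisms. The fixed-point description makes the vertex identifications transparent, but one must verify that the embeddings are genuinely \emph{induced}, i.e.\ that a product $\sigma^{-1}\tau$ of two block transpositions fixing a prescribed endpoint is a block transposition of $\Sym_n$ if and only if its restriction is one of the smaller symmetric group; the delicate point is the $S_{n-2}^\vartriangle$ bookkeeping, where the two codimension-one embeddings overlap precisely in $S_{n-2}^\vartriangle$. Using Lemma~\ref{lem2oct13} directly would instead demand its converse -- that $\sigma(i,j,k)$ has \emph{no} neighbours beyond the five families -- which the induction neatly avoids. Finally, the restriction $n\geq 5$ is what keeps the induction from bottoming out in the degenerate small graphs, and matches the separate treatment of $n=4$.
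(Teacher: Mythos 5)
Your proposal is correct and takes essentially the same route as the paper's proof: the same induction (with base cases $n=5,6$ settled by computation), the same partition $T_n=B\cupdot L\cupdot F\cupdot S_{n-2}^\vartriangle$, and the same per-part degree counts via Proposition \ref{prova}, Corollary \ref{lem2oct11}, and Lemma \ref{lem1oct11}. The only difference is one of explicitness: you spell out the induced-subgraph isomorphisms identifying $\bar{\Gamma}(L\cup S_{n-2}^\vartriangle)$, $\bar{\Gamma}(F\cup S_{n-2}^\vartriangle)$, and $\bar{\Gamma}(S_{n-2}^\vartriangle)$ with the block transposition graphs on $n-1$ and $n-2$ letters, which the paper invokes tacitly each time it writes ``by induction on $n$''.
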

\begin{proof}
Since $B$ is a maximal clique of size $n-1$, every vertex of $B$ is incident with $n-2$ edges of $\bar{\Gamma}(B)$. From Proposition \ref{prova} (i), as many as $n-2$ edges incident with a vertex in $B$ have an endpoint in $L\cup F$. Thus, the assertion holds for the vertices in $B$.

In $\bar{\Gamma}(F)$ every vertex has degree $2(n-1)-4=2n-6,$ by induction on $n$. This together with Proposition \ref{prova} (ii) show that every vertex of $\bar{\Gamma}(F)$ has degree $2n-5$ in $\bar{\Gamma}(L\cup F)$. By Corollary \ref{lem2oct11}, this holds true for every vertex of $\bar{\Gamma}(L)$. The degree increases to $2n-4$ when we also count the unique edge in $\bar{\Gamma}(B),$ according to the first assertion of Proposition \ref{prova} (i).

In $\bar{\Gamma}(S_{n-2}^\vartriangle)$ every vertex has degree $2n-8,$ by induction on $n$. Furthermore, in $\bar{\Gamma}(L\cup S_{n-2}^\vartriangle)$ every vertex has degree $2n-6$ by induction on $n$, and the same holds for  $\bar{\Gamma}(F\cup S_{n-2}^\vartriangle)$. This together with Lemma \ref{lem1oct11} show that every vertex in $S_{n-2}^\vartriangle$ is the endpoint of exactly $2(2n-6)-(2n-8)$ edges in $\bar{\Gamma}$.
\end{proof}
Our next step is to determine the set of all maximal cliques of $\bar{\Gamma}$ of size $2$. From now on, we will be referring to the edges of the complete graph arising from a clique as the edges of the clique.
According to Lemma \ref{lem2oct13} (v), let $\Lambda$ be the set of all edges $$e_l=\{\sigma(l,l+1,l+3),\sigma(l,l+2,l+3)\},$$ where $l$ ranges over $\{0,1,\ldots n-3\}$. From (\ref{eqa18oct}), the endpoints of such an edge are the inverse of one another.
\begin{prop}
\label{thm2oct11} Let $n\geq 5$. The edges in $\Lambda$ together with three more edges
\begin{equation}
\label{eq1aoct12}
\begin{array}{lll}
e_{n-2}&=&\{\sigma(0,n-2,n-1),\sigma(0,n-2,n)\},\\
e_{n-1}&=&\{\sigma(1,n-1,n),\sigma(0,1,n-1)\},\\
e_{n}&=&\{\sigma(0,2,n),\sigma(1,2,n)\},\\
\end{array}
\end{equation}are pairwise disjoint edges of maximal cliques of $\bar{\Gamma}$ of size $2$.
\end{prop}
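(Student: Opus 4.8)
The plan is to exploit the toric map $\bar{\textsf{f}}$ as the organizing symmetry: I claim that all $n+1$ edges in the statement form a single orbit under $\langle\bar{\textsf{f}}\rangle$. Using the formula of Lemma~\ref{22marchC2015} one computes, for $1\le l\le n-3$, that $\bar{\textsf{f}}(e_l)=e_{l-1}$, and at the four transitional edges that
\[
\bar{\textsf{f}}(e_0)=e_n,\qquad \bar{\textsf{f}}(e_n)=e_{n-1},\qquad \bar{\textsf{f}}(e_{n-1})=e_{n-2},\qquad \bar{\textsf{f}}(e_{n-2})=e_{n-3}.
\]
Hence $\bar{\textsf{f}}$ permutes $e_0,e_n,e_{n-1},\dots,e_1$ in a single cycle of length $n+1$, equal to the order of $\bar{\textsf{f}}$. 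Since $\bar{\textsf{f}}$ is an automorphism of $\Cat$ (Proposition~\ref{22march22D2015}), it maps edges to edges and maximal cliques of size $2$ to maximal cliques of the same size. Therefore it will suffice to establish the three assertions — that the endpoints of each edge are adjacent, that the corresponding $2$-clique is maximal, and that the $n+1$ edges are pairwise disjoint — and for the first two it is enough to check them for a single representative.

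Adjacency is immediate for the edges in $\Lambda$: writing $e_l=\{\sigma(l,l+1,l+3),\sigma(l,l+2,l+3)\}$, its endpoints satisfy the hypothesis $(i,k)=(i',k')$ with $j<j'$ of Lemma~\ref{lem2oct13}(v), so they are adjacent; by (\ref{eqa18oct}) they are also inverse to one another. The three special edges are adjacent because they are $\bar{\textsf{f}}$-images of edges in $\Lambda$. For pairwise disjointness I would argue directly on cut points. Every endpoint of a $\Lambda$-edge is a $\sigma(i,j,k)$ with $k-i=3$, and these $2(n-2)$ transpositions are mutually distinct, since each is determined by $l=i$ and its middle cut point. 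On the other hand, for $n\ge5$ each of the six endpoints of $e_{n-2},e_{n-1},e_n$ has $k-i\in\{n-1,n\}$, hence $k-i\ge n-1>3$, so none of them coincides with a $\Lambda$-endpoint; a short comparison of the six triples shows they are also distinct from each other. Thus all $2(n+1)$ endpoints are distinct, and the $n+1$ edges are pairwise disjoint.

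The remaining and decisive point is maximality, for which I fix an interior representative $e_l=\{a,b\}$ with $0<l$ and $l+3<n$ (such $l$ exists precisely because $n\ge5$), where by (\ref{eq22ott12}) $a=\sigma(l,l+1,l+3)$ and $b=\sigma(l,l+2,l+3)=a^{-1}$ are the two $3$-cycles acting on the consecutive symbols $l+1,l+2,l+3$ and fixing everything else. I must show that $a$ and $b$ have no common neighbour in $\bar\Gamma$, that is, that the edge $\{a,b\}$ lies in no triangle. Suppose $c\in T_n$ is such a common neighbour; then $\mu=a^{-1}c$ and $\nu=b^{-1}c$ lie in $T_n$, and $\mu\nu^{-1}=a^{-1}b=a$, so that $c=a\mu=b\nu$ is itself a block transposition while $a$ is expressed as the product $\mu\nu^{-1}$ of two block transpositions. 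The plan is then to confront the very rigid shape of a block transposition in (\ref{eq22ott12}) — increasing by $1$ on each of the four intervals of (\ref{function}) — with the fact that $a$, $b$, and hence $c$, differ from the identity only on the three positions $l+1,l+2,l+3$: locating, via the Partition Lemma~\ref{lem1oct9} and the degree information of Proposition~\ref{prova}, the cut points that $c$ (equivalently $\mu$ and $\nu$) could possibly have, and ruling each configuration out. Once maximality is proved for this one edge, transitivity of $\langle\bar{\textsf{f}}\rangle$ on the $n+1$ edges promotes it to all of them, completing the proof.

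I expect this last case analysis to be the main obstacle. Unlike the adjacency and disjointness steps, which are controlled by the symmetry $\bar{\textsf{f}}$ and by a bookkeeping of cut points, the non-existence of a common neighbour is a genuinely combinatorial statement about how two block transpositions can multiply to a block transposition, and it must be pushed through by a careful, slightly tedious elimination of cases.
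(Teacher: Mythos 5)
Your orbit computation, the adjacency of the endpoints of each $e_m$, and the pairwise disjointness are all correct, and this part coincides with the paper's own argument: the paper likewise uses the equations (\ref{eq1oct13}) to show that $\bar{\textsf{f}}$ acts on $\Lambda\cup\{e_{n-2},e_{n-1},e_n\}$ as the $(n+1)$-cycle $(e_n,e_{n-1},\ldots,e_1,e_0)$ and then reduces everything to one representative edge. The genuine gap is in the decisive step, maximality: you never prove it. You set up the common-neighbour equation $c=a\circ\mu=b\circ\nu$ with $\mu,\nu\in T_n$ correctly, but then you only announce a ``careful, slightly tedious elimination of cases'' without carrying it out. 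Since the proposition's entire content beyond bookkeeping is precisely that these $2$-cliques are maximal, what is missing is the heart of the proof, not a detail.

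Moreover, your choice of representative makes the deferred step genuinely hard rather than routine. You pick an interior edge $e_l$, whose endpoints lie in $S_{n-2}^\vartriangle$; but the tools you plan to invoke were built for the other parts of the partition. Lemma \ref{lem1oct11} only excludes a common neighbour lying in $B$; Proposition \ref{prova} controls the bipartite graphs $(L\cup F,B)$ and $(L,F)$ and says nothing about neighbours of a vertex of $S_{n-2}^\vartriangle$ lying in $L$, in $F$, or in $S_{n-2}^\vartriangle$ itself. So the case elimination cannot be run with the cited lemmas; it would require a from-scratch analysis of when a product of two block transpositions is again a block transposition (or an induction on $n$, which is how the paper later handles the related Lemma \ref{cor1oct13}, and which you have not set up). The paper sidesteps all of this by taking $e_n=\{\sigma(0,2,n),\sigma(1,2,n)\}$ as the representative, exactly because its endpoints lie in $B$ and $F$, where the structural lemmas bite: Lemma \ref{lem1oct11} pushes any common neighbour $\sigma$ into $L\cup F$; the degree count in Proposition \ref{prova} (i) together with Lemma \ref{prop1oct12} (iv) forces $\sigma=\sigma(0,2,l)\in L$ for some $3\le l<n$; while Proposition \ref{prova} (ii) together with Lemma \ref{prop1oct12} (ii) forces $\sigma=\sigma(0,1,2)$ --- a contradiction obtained in a few lines. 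If you replace your interior representative by $e_n$ and run exactly this argument, your proof closes.
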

\begin{proof} Since $n\geq 5$, the above edges are pairwise disjoint.

Now, by (\ref{22march2015}), the following equations
\begin{equation}\label{eq1oct13}
\begin{array}{llllll}
\bar{\textsf{f}}(\sigma(l,l+1,l+3))&=&\sigma(l-1,l,l+2),& \mbox{ for } l\geq 1;\\
\bar{\textsf{f}}(\sigma(l,l+2,l+3))&=&\sigma(l-1,l+1,l+2) & \mbox{ for } l\geq 1; \\
\bar{\textsf{f}}(\sigma(0,1,3))&=&\sigma(0,2,n);\\
\bar{\textsf{f}}(\sigma(0,2,3))&=&\sigma(1,2,n);\\
\bar{\textsf{f}}(\sigma(0,2,n)&=&\sigma(1,n-1,n);\\
\bar{\textsf{f}}(\sigma(1,2,n))&=&\sigma(0,1,n-1);\\
\bar{\textsf{f}}(\sigma(1,n-1,n)&=&\sigma(0,n-2,n-1); \\
\bar{\textsf{f}}(\sigma(0,1,n-1))&=&\sigma(0,n-2,n);\\
\bar{\textsf{f}}(\sigma(0,n-2,n-1))&=&\sigma(n-3,n-2,n); \\
\bar{\textsf{f}}(\sigma(0,n-2,n))&=&\sigma(n-3,n-1,n).
\end{array}
\end{equation}
hold. This shows that $\bar{\textsf{f}}$ leaves the set $\Lambda\cup\{e_{n-2},e_{n-1},e_n\}$ invariant acting on it as the cycle permutation
$(e_n,\,e_{n-1},\cdots, e_1,\,e_0)$. 

Now, it suffices to verify that $e_n$ is a maximal clique of $\bar\Gamma$. Assume on the contrary that $\sigma=\sigma(i,j,k)$ is adjacent to both $\sigma(1,2,n)$ and $\sigma(0,2,n)$. As $\sigma(0,2,n)\in B$, Lemma \ref{lem1oct11} implies that $\sigma\in L\cup F$. Also, Proposition \ref{prova} (i) shows that $\sigma(0,2,n)$ has degree $n-2$ in $L\cup F$. In particular, in the proof of Proposition \ref{prova} (i), we have seen that $\sigma(0,2,n)$ must be adjacent to $n-3$ vertices of $L$, as $\sigma(1,2,n)\in F$. Then, by Lemma \ref{prop1oct12} (iv), $\sigma=\sigma(0,2,l)$ for some $l$ with $3\leq l < n$.

On the other hand, Proposition \ref{prova} (ii) shows that $\sigma\in L$ is uniquely determined by $\sigma(1,2,n)\in F$, and, by Lemma \ref{prop1oct12} (ii), $\sigma=\sigma(0,1,2)$, a contradiction.
\end{proof} 
From now on, $V$ denotes the set of the vertices of the edges $e_m$ with $m$ ranging over $\{0,1,\ldots,n\}$. For $n=4$, the edges $e_m$ are not pairwise disjoint, but computations show that they are also edges of maximal cliques of $\bar{\Gamma}$ of size $2$.
\begin{lem}
\label{lem1oct18} The toric maps and the reverse map preserve $V.$ Then, the toric-reverse group is regular on $V,$ and $\bar{\Gamma}(V)$ is a vertex-transitive graph.
\end{lem}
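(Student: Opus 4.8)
The plan is to reduce everything to the action of the two generators $\bar{\textsf{f}}$ and $\textsf{g}$ of the toric-reverse group on the $n+1$ edges $e_0,\ldots,e_n$, and then to exploit the equality $|V|=2(n+1)=|\overline{\textsf{D}}_{n+1}|$ to upgrade transitivity to regularity. For the toric maps nothing new is needed: Proposition \ref{thm2oct11} already shows that $\bar{\textsf{f}}$ leaves $\{e_0,\ldots,e_n\}$ invariant, permuting these edges as the single cycle $(e_n,e_{n-1},\ldots,e_0)$, and since every toric map is a power of $\bar{\textsf{f}}$ the toric group $\bar{\textsf{F}}$ stabilizes $V$. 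It thus remains to handle $\textsf{g}$ and then to prove regularity.

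First I would verify that $\textsf{g}$ also permutes the $e_m$, by substituting into $\textsf{g}(\sigma(i,j,k))=\sigma(n-k,n-j,n-i)$ from (\ref{eq1oct11}). For a generic edge $e_l=\{\sigma(l,l+1,l+3),\sigma(l,l+2,l+3)\}$ with $0\le l\le n-3$ this yields $\textsf{g}(e_l)=e_{n-3-l}$, so the generic edges are permuted among themselves; for the three exceptional edges one finds $\textsf{g}(e_{n-1})=e_{n-1}$ while $\textsf{g}$ interchanges $e_{n-2}$ and $e_n$. Hence $\textsf{g}$ stabilizes $\{e_0,\ldots,e_n\}$ and therefore $V$, so the whole toric-reverse group $\overline{\textsf{D}}_{n+1}$ preserves $V$, which is the first assertion.

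For regularity I would first record that, for $n\ge 5$, the $n+1$ edges are pairwise disjoint, so $|V|=2(n+1)=|\overline{\textsf{D}}_{n+1}|$; consequently it suffices to prove transitivity, since a transitive action of a group on a set of equal cardinality is automatically regular. To obtain transitivity I would split $V$ according to the position of a vertex inside its edge, writing $a_m$ and $b_m$ for the first and second listed endpoints of $e_m$. Reading off the ten identities of (\ref{eq1oct13}) shows that $\bar{\textsf{f}}$ carries $a_m$ to $a_{m-1}$ and $b_m$ to $b_{m-1}$, indices taken mod $n+1$; hence $\bar{\textsf{F}}$ has precisely the two orbits $\{a_0,\ldots,a_n\}$ and $\{b_0,\ldots,b_n\}$, each of size $n+1$. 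The computations of the previous paragraph show further that $\textsf{g}$ exchanges the first and second positions in every edge---explicitly $a_l\mapsto b_{n-3-l}$ and $b_l\mapsto a_{n-3-l}$ for $0\le l\le n-3$, together with the corresponding swaps on $e_{n-1}$ and across $e_{n-2},e_n$---so $\textsf{g}$ maps the first $\bar{\textsf{F}}$-orbit onto the second. Therefore $\overline{\textsf{D}}_{n+1}$ fuses the two $\bar{\textsf{F}}$-orbits, is transitive, and by the cardinality count regular on $V$. Finally, since $\overline{\textsf{D}}_{n+1}$ consists of automorphisms of $\bar{\Gamma}$ by Proposition \ref{22march22D2015} and stabilizes $V$, its restriction to $V$ is a transitive group of automorphisms of the induced subgraph $\bar{\Gamma}(V)$, which is therefore vertex-transitive. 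For $n=4$ the edges overlap and $V$ is smaller, and this case is disposed of by the direct computation already invoked.

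The main obstacle I anticipate is organizational rather than conceptual: one must keep the labelling of the two endpoints of each edge coherent through the three exceptional edges $e_{n-2},e_{n-1},e_n$, so that the two statements ``$\bar{\textsf{f}}$ respects the partition of $V$ into first and second endpoints'' and ``$\textsf{g}$ exchanges the two parts'' hold uniformly over generic and exceptional edges alike. Establishing this compatibility is precisely the purpose of the explicit list (\ref{eq1oct13}) and the single formula (\ref{eq1oct11}), so the remaining work is careful but entirely routine bookkeeping.
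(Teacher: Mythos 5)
Your proposal is correct and follows essentially the same route as the paper: both use the cycle action of $\bar{\textsf{f}}$ from (\ref{eq1oct13}) to get two $\bar{\textsf{F}}$-orbits (one per endpoint type), use (\ref{eq1oct11}) to show $\textsf{g}$ preserves $V$ and fuses the two orbits, and then deduce regularity from $|V|=2(n+1)=|\overline{\textsf{D}}_{n+1}|$. Your bookkeeping for $\textsf{g}$ (namely $\textsf{g}(e_l)=e_{n-3-l}$ with endpoint types swapped, plus the exceptional swaps involving $e_{n-2},e_{n-1},e_n$) is in fact slightly more precise than the paper's phrasing, but it is the same argument.
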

\begin{proof}
Since $\bar{\textsf{F}}$ is the subgroup generated by $\bar{\textsf{f}}$, from (\ref{eq1oct13}) follows that $\bar{\textsf{F}}$ preserves $V$ and has two orbits on $V$, each of them containing one of the two endpoints of the edges $e_m$ with $0\le m \le n$.

In addition, by (\ref{eq1oct11}), the reverse map $\textsf{g}$ interchanges the endpoints of $e_m$ with $0\le m\le n-3$ and $m=n-1$ while 
$$\begin{array}{lll}
\bar{\textsf{g}}(\sigma(0,n-2,n-1))&=&\sigma(1,2,n);\\
\bar{\textsf{g}}(\sigma(0,n-2,n)&=&\sigma(0,2,n);\\
\bar{\textsf{g}}(\sigma(1,2,n))&=&\sigma(0,n-2,n-1);\\
\bar{\textsf{g}}(\sigma(0,2,n))&=&\sigma(0,n-2,n).
\end{array}$$This implies that $\textsf{g}$ preserves $V$, and $\overline{\textsf{D}}_{n+1}$ acts transitively on $V$.
 
Now, since $|V|=2(n+1)$ and $\overline{\textsf{D}}_{n+1}$ has order $2(n+1)$, then $\overline{\textsf{D}}_{n+1}$ is regular on $V$.
\end{proof}
Our next step is to show that the $e_m$ with $0\le m \le n$ are the edges of all maximal cliques of $\bar{\Gamma}$ of size $2$. Computations performed by using the package ``grape'' of GAP \cite{gap} show that the assertion is true for $n =4,5,6$.
\begin{lem}\label{cor1oct13}
The edge of every maximal clique of $\bar{\Gamma}$ of size $2$ is one of the edges $e_m$ with $0\le m \le n$.
\end{lem}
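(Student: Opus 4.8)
The plan is to prove the contrapositive: any edge $\{\sigma,\sigma'\}$ of $\bar\Gamma$ that is not one of the $e_m$ has its two endpoints joined to a common vertex, so that it cannot be the edge of a maximal clique of size $2$. First I would normalise the edge by the toric map. By Lemma \ref{22marchC2015}, $\bar{\textsf{f}}^{\,i}(\sigma(i,j,k))=\sigma(0,j-i,k-i)$ for $i>0$, so iterating $\bar{\textsf{f}}$ lowers the first cut point of a block transposition to $0$. Since $\bar{\textsf{f}}$ is an automorphism of $\bar\Gamma$ (Proposition \ref{22march22D2015}) that permutes $\{e_0,\dots,e_n\}$ (Proposition \ref{thm2oct11}), it sends maximal $2$-cliques to maximal $2$-cliques and preserves membership in $\{e_m\}$; hence, after applying $\bar{\textsf{f}}^{\,i_0}$ with $i_0$ the smaller first cut point of the two endpoints, I may assume that one endpoint has first cut point $0$, that is, it lies in $B\cup L$, and that it lies in $B$ exactly when the original edge already met $B$. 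The reverse map $\textsf{g}$, which fixes $B$ and interchanges $L$ and $F$ (Corollary \ref{lem2oct11}), provides a further symmetry to cut down the remaining cases.

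Suppose first that the distinguished endpoint $\sigma$ lies in $B$, say $\sigma=\sigma(0,j,n)$. Then Lemma \ref{lem1oct11} excludes $S_{n-2}^\vartriangle$ for the other endpoint, and Corollary \ref{cor3oct13ter} shows that an edge inside $B$ lies in the clique $B$ of size $n-1\ge 3$; thus the other endpoint lies in $L\cup F$. By Proposition \ref{prova}(i) that endpoint has $\sigma$ as its only neighbour in $B$, so a common neighbour of the two endpoints must be sought in $L\cup F$. Here the explicit decompositions of Lemma \ref{prop1oct12}, pinned down by the uniqueness statements of Lemma \ref{leaoct19}, let me write down such a common neighbour in every instance except $e_{n-2}$ and $e_n$.

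If instead $\sigma\in L$ and the edge avoids $B$, the other endpoint ranges over $L$, $F$, or $S_{n-2}^\vartriangle$, giving the edge types of Lemma \ref{lem2oct13}. The type (v) edges are the quickest to dispose of: since the block transpositions $\sigma(i,\cdot,k)$ with fixed $(i,k)$ are pairwise adjacent by Lemma \ref{lem2oct13}(v) and hence form a clique of size $k-i-1$, such an edge extends whenever $k-i>3$ and survives only as a $\Lambda$-edge when $k-i=3$. For the remaining types I would once more produce an explicit common neighbour from Lemmas \ref{prop1oct12} and \ref{leaoct19}, leaving $e_{n-1}$ as the sole survivor; undoing the toric and reverse normalisations then shows that every maximal $2$-clique is one of $e_0,\dots,e_n$. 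The main difficulty lies in this last, $L$-endpoint case: it carries the most sub-cases and demands a tailored common neighbour for each non-$e_m$ edge, the delicate point throughout being to separate the three boundary edges $e_{n-2},e_{n-1},e_n$, which arise from the wrap-around of $\bar{\textsf{f}}$ and are not of type (v), from the generic edges that always admit a common neighbour.
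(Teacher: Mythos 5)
Your normalisation framework is sound: iterating (\ref{22march2015}) does send $\sigma(i,j,k)$ to $\sigma(0,j-i,k-i)$, both $\bar{\textsf{f}}$ and $\textsf{g}$ are automorphisms of $\bar\Gamma$ permuting $\{e_0,\ldots,e_n\}$, so one endpoint can indeed be moved into $B\cup L$, and your bookkeeping of which exceptional edges must survive in each case ($e_{n-2}$ and $e_n$ when the edge meets $B$; the $\Lambda$-edges and $e_{n-1}$ otherwise) is consistent. The genuine gap is that the step carrying the entire content of the lemma is never executed: in both main cases you only announce that Lemmas \ref{prop1oct12} and \ref{leaoct19} ``let me write down such a common neighbour'' and that you ``would once more produce an explicit common neighbour,'' and you yourself flag the $L$-endpoint case as the one with ``the most sub-cases.'' But the assertion that every non-$e_m$ edge admits a common neighbour \emph{is} the lemma; deferring it leaves a plan, not a proof. (The sub-cases can be done: for instance, for a type (ii) edge $\{\sigma(0,j,k),\sigma(j,k,k')\}$ take the common neighbour $\sigma(j,k,k'')$ with $k<k''\le n$, $k''\ne k'$, adjacent to the endpoints by (i) and (ii); this exists unless $k=n-1$, and in that residual case $\sigma(i'',j,n-1)$ with $0<i''<j$ works by (iii) and (ii) unless $j=1$, i.e.\ unless the edge is $e_{n-1}$. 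Your proof needs roughly a dozen computations of this kind, and none is supplied.)

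There is a second, structural gap: your $L$-endpoint case tacitly assumes that every edge of $\bar\Gamma$ is of one of the types (i)--(v) of Lemma \ref{lem2oct13}. That lemma only states that those pairs \emph{are} edges; nowhere does the paper claim they are \emph{all} the edges, and your case split collapses without this. The claim is true, but it needs an argument you never give: the type (i)--(v) neighbours of $\sigma(a,b,c)$ are pairwise distinct and number $(n-b-1)+(b-1)+(c-a-2)+(n-c)+a=2(n-2)$, which equals the degree computed in Proposition \ref{thm1oct11}, so the list is exhaustive. It is worth noting that the paper avoids both difficulties at once by inducting on $n$: since $L\cup S_{n-2}^\vartriangle$ (and, via $\textsf{g}$, also $F\cup S_{n-2}^\vartriangle$) is an embedded copy of $T_{n-1}$, the inductive hypothesis disposes of every edge lying inside these copies, and the surviving possibility --- one endpoint $\sigma(0,j,n)\in B$, the other $\sigma(i',j,n)\in F$ --- is killed by a single triangle argument: for $j\ge 3$ any $\sigma(\bar i,j,n)$ with $0<\bar i<j$, $\bar i\ne i'$, is a common neighbour by Lemma \ref{lem2oct13} (iii), while $j=2$ gives exactly $e_n$. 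Your route trades that induction for an exhaustive case analysis, and the case analysis is precisely what is missing.
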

\begin{proof}
On the contrary take an edge $e$ of a maximal clique of $\bar{\Gamma}$ of size $2$ other than the edges $e_m$. Since $L\cup S_{n-2}^\vartriangle\subset T_{n-1}$, by induction on $n\geq4$, $e$ is an edge of $\bar{\Gamma}(F\cup B)$. Also, $e$ has one endpoint in $B$ and the other in $F$, as $B$ is clique.

Now, let the endpoint of $e$ in $B$ be $\sigma(0,j,n)$ for some $1\leq j\leq n-1$. Then, by the proof of the first assertion of Proposition \ref{prova} (i), the vertex $\sigma(0,j,n)$ is adjacent to $\sigma(\bar{i},j,n)$ for any $0\le \bar{i} < j$. As the vertices $\sigma(\bar{i},j,n)$ for $0\le \bar{i} < j$ are adjacent by Lemma \ref{lem2oct13} (iii), $e$ is and edge of the triangle of vertices $\sigma(0,j,n)$, $\sigma(i',j,n),$ and $\sigma(\bar{i},j,n)$ with $i'\neq\bar{i}$ and $0\le i',\bar{i} < j$, a contradiction.
\end{proof}

Lemma \ref{cor1oct13} shows that $V$ consists of the endpoints of the edges of $\bar{\Gamma}$ which are the edges of maximal cliques of size $2$. Thus $\bar{\Gamma}(V)$ is relevant for the study of $\Cat$. We show some properties of $\bar{\Gamma}(V)$.
\begin{prop}\label{lemgoct12}
$\bar{\Gamma}(V)$ is a $3$-regular graph.
\end{prop}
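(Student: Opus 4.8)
The plan is to reduce everything to a single degree computation by exploiting symmetry. By Lemma~\ref{lem1oct18} the group $\overline{\textsf{D}}_{n+1}$ acts transitively on $V$, so $\bar{\Gamma}(V)$ is vertex-transitive and all its vertices have the same degree; hence it suffices to determine the degree of one conveniently chosen vertex. I would take $v=\sigma(0,2,n)$, the endpoint of $e_n$ lying in $B$. The reason for this choice is that the full neighbourhood of a vertex of $B$ is already under control: by Proposition~\ref{prova}(i) together with the fact that $B$ is a clique of size $n-1$, the vertex $v$ has exactly $n-2$ neighbours inside $B$ and exactly $n-2$ neighbours in $L\cup F$ (in agreement with Proposition~\ref{thm1oct11}). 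The task is then to count how many of these $2(n-2)$ neighbours actually lie in $V$.

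First I would locate $V$ inside the partition $T_n=B\cupdot L\cupdot F\cupdot S_{n-2}^\vartriangle$ of Lemma~\ref{lem1oct9}. Scanning the explicit list of endpoints of the edges $e_m$ in Proposition~\ref{thm2oct11}, one checks that the only vertices of the form $\sigma(0,j,n)$ appearing among them are $\sigma(0,2,n)$ (in $e_n$) and $\sigma(0,n-2,n)$ (in $e_{n-2}$); thus $V\cap B=\{\sigma(0,2,n),\sigma(0,n-2,n)\}$, and these two are distinct precisely because $n\geq 5$. Consequently, among the $n-2$ neighbours of $v$ inside $B$, the single vertex of $V$ is $\sigma(0,n-2,n)$.

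It remains to count the neighbours of $v$ in $(L\cup F)\cap V$. Here I would invoke the explicit description obtained inside the proof of Proposition~\ref{thm2oct11}: the vertex $\sigma(0,2,n)$ is adjacent to exactly one vertex of $F$, namely its partner $\sigma(1,2,n)\in e_n$, and to the $n-3$ vertices $\sigma(0,2,l)$ with $3\leq l\leq n-1$ of $L$, these being produced by the factorisation of Lemma~\ref{prop1oct12}(iv) with $j=2$. Of the latter, only $\sigma(0,2,3)$ belongs to $V$, since $\sigma(0,2,3)$ is the second endpoint of $e_0$ while no $\sigma(0,2,l)$ with $l>3$ occurs in the list of Proposition~\ref{thm2oct11} for $n\geq 5$. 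Hence $v$ has precisely the three neighbours $\sigma(0,n-2,n)$, $\sigma(1,2,n)$, and $\sigma(0,2,3)$ in $V$, and vertex-transitivity yields that $\bar{\Gamma}(V)$ is $3$-regular.

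I expect the delicate point to be justifying that these three are the \emph{only} neighbours, i.e.\ ruling out every remaining vertex of $V$. This is exactly what the degree bookkeeping of Proposition~\ref{prova}(i) delivers: because $v$ already attains its full degree $n-2$ into $L\cup F$ through the vertices just listed, there is no room for any further $V$-vertex from $L\cup F$, and the clique structure of $B$ forces the unique $B$-neighbour in $V$. The only case escaping the argument is $n=4$, where the edges $e_m$ overlap and $|V|<2(n+1)$; this case is settled by the GAP computation already cited for $\bar{\Gamma}$, so the statement holds for all $n\geq 4$.
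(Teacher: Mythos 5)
Your proof is correct and takes essentially the same route as the paper: both arguments compute the degree of the representative vertex $\sigma(0,2,n)$, identify its three neighbours $\sigma(0,2,3)$, $\sigma(1,2,n)$, and $\sigma(0,n-2,n)$ in $V$ (ruling out all other vertices of $V$ via Proposition~\ref{prova} and the explicit neighbour lists coming from Lemma~\ref{prop1oct12}), and then invoke the transitivity of $\overline{\textsf{D}}_{n+1}$ on $V$ from Lemma~\ref{lem1oct18} to conclude $3$-regularity. The only cosmetic difference is that the paper excludes neighbours in $V\cap S_{n-2}^\vartriangle$ directly by Lemma~\ref{lem1oct11}, whereas you cap the whole neighbourhood using the $2(n-2)$-regularity of $\bar{\Gamma}$ (Proposition~\ref{thm1oct11}); both are legitimate and rely on results established before this proposition.
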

\begin{proof} First we prove the assertion for the endpoint $v=\sigma(0,2,n)$ of $e_{n}$. By Lemma \ref{lem2oct13} (i) (iii) (v), $\sigma(0,2,3),\sigma(1,2,n),$ and $\sigma(0,n-2,n)$ are neighbors of $v$. Since $\sigma(1,2,n)\in F$ and $\sigma(0,2,3)\in L$, from the first assertion of Proposition \ref{prova} (i), $v\in B$ is not adjacent to any other vertex in either $V\cap F$ or $V\cap L$. Also, Lemma \ref{lem1oct11} yields that no vertex in $V\cap S_{n-2}^\vartriangle$ is adjacent to $\sigma(0,2,n)$. Thus, $v$ has degree $3$ in $\bar{\Gamma}(V)$.
 
Now the claim follows from Lemma \ref{lem1oct18}.
\end{proof}
\begin{rem} By a famous conjecture of Lov\'asz, every finite, connected, and  vertex-transitive graph contains a Hamiltonian cycle, except the five known counterexamples; see \cite{LL,BL}. Then, the second assertion of Lemma \ref{lem1oct18} and Proposition \ref{thm1oct13} show that the Lov\'asz conjecture holds for the graph $\bar\Gamma(V)$.
\end{rem}
\begin{prop}\label{thm1oct13}
$\bar{\Gamma}(V)$ is a Hamiltonian graph whenever $n\geq 5$.
\end{prop}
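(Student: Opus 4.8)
The plan is to make the cyclic symmetry of $\bar{\Gamma}(V)$ fully explicit and then to exhibit a Hamiltonian cycle by inspection. By Lemma~\ref{lem1oct18} the toric group $\bar{\textsf{F}}=\langle\bar{\textsf{f}}\rangle$ has exactly two orbits on $V$, each meeting every edge $e_m$ ($0\le m\le n$) in precisely one endpoint. First I would fix this labelling: write $u_m$ and $v_m$ for the two endpoints of $e_m$, chosen so that all the $u_m$ lie in one orbit and all the $v_m$ in the other, and set $N=n+1$ with indices read modulo $N$. Since $\bar{\textsf{f}}$ sends $e_m$ to $e_{m-1}$ (this is the cycle $(e_n,e_{n-1},\ldots,e_0)$ exhibited in the proof of Proposition~\ref{thm2oct11}) and preserves each orbit, it acts as the pure shift $\bar{\textsf{f}}(u_m)=u_{m-1}$ and $\bar{\textsf{f}}(v_m)=v_{m-1}$.

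The key step is to read off the entire adjacency of $\bar{\Gamma}(V)$ from the local data at a single vertex. The proof of Proposition~\ref{lemgoct12} already records the three neighbours of $u_n=\sigma(0,2,n)$, namely $\sigma(1,2,n)$, $\sigma(0,2,3)$, and $\sigma(0,n-2,n)$; matching these against the endpoints of $e_n$, $e_0$, $e_{n-2}$ and using the orbit assignments supplied by (\ref{eq1oct13}) identifies them as $v_n$, $v_0$, $v_{n-2}$. Applying the shift $\bar{\textsf{f}}^{\,n-m}$ then yields, for every $m$,
\[
u_m\sim v_m,\qquad u_m\sim v_{m+1},\qquad u_m\sim v_{m-2}\pmod{N}.
\]
Because $\bar{\Gamma}(V)$ is $3$-regular (Proposition~\ref{lemgoct12}) and, for $n\ge5$ (so $N\ge6$), the residues $0,1,-2$ are pairwise distinct modulo $N$, these are precisely all the edges; in particular $\bar{\Gamma}(V)$ is a bipartite circulant with parts $\{u_m\}$ and $\{v_m\}$.

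With this circulant description in hand, a Hamiltonian cycle is immediate: I would use only the matching edges $\{u_m,v_m\}$ and the successor edges $\{u_m,v_{m+1}\}$, giving
\[
v_0-u_0-v_1-u_1-\cdots-v_{N-1}-u_{N-1}-v_0,
\]
where the closing edge is $\{u_{N-1},v_N\}=\{u_{N-1},v_0\}$. This closed walk alternates between the two parts, passes through each of the $2(n+1)$ vertices exactly once, and is therefore a Hamiltonian cycle. I do not expect a serious obstacle: the only genuine work lies in the bookkeeping of the index shifts in the second paragraph — especially confirming the orbit assignment of the three neighbours of $u_n$, so that the $-1$ shift on edge indices translates correctly into the $+1$ shift $u_m\sim v_{m+1}$ — after which Hamiltonicity follows with no further computation. (The restriction $n\ge5$ is exactly what guarantees, via Proposition~\ref{thm2oct11}, that the $e_m$ are pairwise disjoint and hence $|V|=2(n+1)$, so that the clean circulant picture is valid.)
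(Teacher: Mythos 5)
Your proof is correct, and it takes a genuinely different route from the paper's, even though both end up exhibiting the very same cycle. The paper argues by direct construction: it builds a path $\mathcal{P}$ through the vertices of $\Lambda$ and a second path $\mathcal{P}'$ through the endpoints of $e_{n-3},e_{n-2},e_{n-1},e_n$, certifying each consecutive adjacency by the product identities of Lemma \ref{lem2oct13} and getting pairwise distinctness of the vertices from Proposition \ref{thm2oct11}; in your labelling, $\mathcal{P}$ followed by $\mathcal{P}'$ is exactly $v_0,u_0,v_1,u_1,\ldots,v_n,u_n,v_0$. You instead certify only the three edges at the single vertex $\sigma(0,2,n)$ (already recorded in the proof of Proposition \ref{lemgoct12}) and let the shift action of $\bar{\textsf{F}}$ transport them around $V$; this is legitimate because the two $\bar{\textsf{F}}$-orbits of Lemma \ref{lem1oct18} are by definition $\bar{\textsf{f}}$-invariant and $\bar{\textsf{f}}$ permutes the $e_m$ as the $(n+1)$-cycle $(e_n,e_{n-1},\ldots,e_0)$ computed in (\ref{eq1oct13}), so indeed $\bar{\textsf{f}}(u_m)=u_{m-1}$ and $\bar{\textsf{f}}(v_m)=v_{m-1}$. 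Combined with $3$-regularity (Proposition \ref{lemgoct12}), this pins down the whole graph: $\bar{\Gamma}(V)$ is the bipartite circulant with $u_m\sim v_m,\,v_{m+1},\,v_{m-2}$ modulo $n+1$. Your method buys strictly more than the statement: a complete structural identification of $\bar{\Gamma}(V)$, from which Hamiltonicity is read off with no further computation and distinctness of the cycle's vertices is automatic from the labelling; the paper's method is more elementary and self-contained, needing no orbit bookkeeping and staying entirely at the level of block transposition products. One half-line you should add: to claim your list is \emph{precisely} the edge set (in particular, that there are no $v$--$v$ edges), note that each $v_j$ already receives three distinct edges, from $u_j$, $u_{j-1}$, $u_{j+2}$ (distinct since $n+1\geq 6$), so $3$-regularity on the $v$-side leaves no room for more. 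This is immediate; and in any case your cycle uses only the edges $u_m\sim v_m$ and $u_m\sim v_{m+1}$, which you established outright, so the Hamiltonicity conclusion stands even without the full circulant claim.
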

\begin{proof}
Let $v_1=\sigma(n-4,n-3,n-1),\quad v_2=\sigma(n-4,n-2,n-1)$ be the endpoints of $e_{n-4}$. We start by exhibiting a path $\mathcal{P}$ in $V$ beginning with $\sigma(0,2,3)$ and ending with $v_1$ that visits all vertices $\sigma(l,l+1,l+3),\sigma(l,l+2,l+3)\in\Lambda$ with $0\leq l\leq n-4$.

For $n=5$, $v_1=\sigma(1,2,4)$, and $$\mathcal{P}=\sigma(0,2,3),\sigma(0,1,3),\sigma(1,3,4),v_1.$$

Assume $n>5$. For every $l$ with $0\leq l\leq n-4$, Lemma \ref{lem2oct13} (ii) (v) show that both edges below are incident to $\sigma(l,l+1,l+3)$:
$$\{\sigma(l,l+1,l+3),\sigma(l+1,l+3,l+4)\},\quad\{\sigma(l,l+2,l+3),\sigma(l,l+1,l+3)\}.$$Therefore,
$$\begin{array}{ll}
\sigma(0,2,3),\sigma(0,1,3),\sigma(1,3,4),\ldots,\sigma(l,l+2,l+3),\sigma(l,l+1,l+3),\\\sigma(l+1,l+3,l+4),\ldots,\,v_1
\end{array}$$is a path $\mathcal{P}$ with the requested property.

By Lemma \ref{lem2oct13}, there also exists a path $\mathcal{P'}$ beginning with $v_1$ and ending with $\sigma(0,2,3)$ which visits the other vertices of $V,$ namely
$$\begin{array}{lll}
v_1,\sigma(n-3,n-1,n),\sigma(n-3,n-2,n),\sigma(0,n-2,n),\sigma(0,n-2,n-1),\\\sigma(0,1,n-1),\sigma(1,n-1,n),\sigma(1,2,n),
\sigma(0,2,n),\sigma(0,2,3).
\end{array}$$By Theorem \ref{thm2oct11}, the vertices are all pairwise distinct. Therefore the union of $\mathcal{P}$ and $\mathcal{P'}$ is a cycle in $V$ that visits all vertices. This completes the proof.
\end{proof}
\begin{rem}
For $n\geq 4$, by Proposition \ref{lemgoct12} and Theorem \ref{teor2}, Proposition \ref{thm1oct13} also follows from a result of Alspach and Zhang \cite{AZ} who proved that all cubic Cayley graphs on dihedral groups have Hamilton cycles.
\end{rem}

\section{The automorphism group of the \\block transposition graph}\label{s6}
We are in a position to give a proof for Theorem \ref{teor2}. Since $\Gamma\cong \bar\Gamma$ and $\textsf{D}_{n+1}\cong\overline{\textsf{D}}_{n+1}$, we may prove Theorem \ref{teor2} using the right-invariant notation.

From Proposition \ref{22march22D2015}, the toric-reverse group $\overline{\textsf{D}}_{n+1}$ is a subgroup $\bar{\textsf{R}}$, the automorphism group of $\bar{\Gamma}$. Also, $\overline{\textsf{D}}_{n+1}$ is regular on $V$, by the second assertion of Lemma \ref{lem1oct18}. Therefore, Theorem \ref{teor2} is a corollary of the following lemma.
\begin{lem}
\label{lemfoct12} The identity is the only automorphism of $\bar{\Gamma}$ fixing a vertex of $V$ whenever $n\geq 5$.
\end{lem}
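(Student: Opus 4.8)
The plan is to establish the equivalent statement that the stabiliser of a vertex $v\in V$ in $\bar{\textsf R}=\Aut(\bar\Gamma)$ is trivial. Since $\overline{\textsf D}_{n+1}\le\bar{\textsf R}$ acts regularly, hence transitively, on $V$ by Lemma \ref{lem1oct18}, for any $v$ there is $\textsf d\in\overline{\textsf D}_{n+1}$ carrying $v$ to $\sigma(0,2,n)$; replacing $\phi$ by $\textsf d\circ\phi\circ\textsf d^{-1}$ reduces everything to the single vertex $v=\sigma(0,2,n)$, the endpoint of $e_n$ lying in $B$. The argument is an induction on $n$, the base cases $n=5,6$ being the exhaustive computations already quoted, and it rests on the partition $T_n=B\cupdot L\cupdot F\cupdot S_{n-2}^\vartriangle$ of Lemma \ref{lem1oct9} together with the recursive embeddings $\bar\Gamma(S_{n-2}^\vartriangle)\cong\bar\Gamma_{n-2}$ and $\bar\Gamma(S_{n-1})\cong\bar\Gamma(S_{n-1}^\triangledown)\cong\bar\Gamma_{n-1}$ onto the block transposition graphs on fewer points. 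First I would identify the invariant pieces: as the maximal $2$-cliques are exactly $e_0,\dots,e_n$ (Lemma \ref{cor1oct13}), $\phi$ permutes them and preserves $V$; and as $B$ is singled out among the cliques of size $n-1$ by the automorphism-invariant property of Corollary \ref{cor3oct13ter}, we get $\phi(B)=B$. Hence the number of neighbours a vertex has in $B$ --- namely $n-2$ on $B$, exactly $1$ on $L\cup F$ by Proposition \ref{prova}(i), and $0$ on $S_{n-2}^\vartriangle$ by Lemma \ref{lem1oct11} --- is preserved, and since these values are distinct for $n\ge5$, $\phi$ leaves $L\cup F$ and $S_{n-2}^\vartriangle$ invariant.

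Next I would show $\phi$ fixes $V$ pointwise. One checks that $V\cap B=\{\sigma(0,2,n),\sigma(0,n-2,n)\}$, the two vertices lying in $e_n$ and $e_{n-2}$; since $\phi$ fixes the first and stabilises $V\cap B$, it fixes the second, so $\phi$ fixes $e_n$ and $e_{n-2}$ setwise. The adjacency among the cliques $e_0,\dots,e_n$ inside $\bar\Gamma(V)$ reproduces, by Proposition \ref{thm2oct11}, the $(n+1)$-cycle along which $\bar{\textsf f}$ rotates them; being graph-theoretic this cyclic structure is respected by $\phi$, which thus acts on it as a cycle automorphism fixing two vertices at distance $2$, and for $n\ge5$ such an automorphism is the identity. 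Hence every $e_m$ is fixed setwise. Finally, each $e_m$ is a \emph{maximal} $2$-clique and so lies in no triangle; therefore no vertex is adjacent to both its endpoints, and a fixed vertex adjacent to one endpoint of a setwise-fixed $e_m$ forces that endpoint to be fixed. Starting from $v$ and using connectivity of $\bar\Gamma(V)$ (it is Hamiltonian by Proposition \ref{thm1oct13}), this propagates to all of $V$.

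I would then descend by induction. The vertices of $V\cap S_{n-2}^\vartriangle$ are the endpoints of $e_l$ with $1\le l\le n-4$, which under the relabelling $\{2,\dots,n-1\}\to[n-2]$ are exactly the endpoints of the edges in the set $\Lambda$ of $\bar\Gamma_{n-2}$; as $\phi$ restricts to an automorphism of $\bar\Gamma(S_{n-2}^\vartriangle)\cong\bar\Gamma_{n-2}$ fixing one of these, the inductive hypothesis gives that $\phi$ fixes $S_{n-2}^\vartriangle$ pointwise. Granting the step $\phi(L)=L$ discussed below, $\phi$ then preserves $S_{n-1}=L\cupdot S_{n-2}^\vartriangle\cong\bar\Gamma_{n-1}$ and fixes a vertex of its copy of $V$, so by induction $\phi$ is trivial on $S_{n-1}$; symmetrically it is trivial on $S_{n-1}^\triangledown=F\cupdot S_{n-2}^\vartriangle$. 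Hence $L\cup F$ is fixed pointwise, and since every vertex of $B$ is the unique $B$-neighbour of some now-fixed vertex of $L\cup F$ (Proposition \ref{prova}(i)), $B$ is fixed pointwise as well; therefore $\phi$ is the identity.

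The crux, and the step I expect to be the main obstacle, is showing $\phi(L)=L$, that is, that $\phi$ does not interchange the two halves $L$ and $F$. These are swapped by the reverse map (Corollary \ref{lem2oct11}) and are abstractly indistinguishable, so the symmetry has to be broken by the data already fixed. The plan is to exploit that $S_{n-2}^\vartriangle$ is fixed pointwise: since $\phi$ preserves each neighbourhood $N(x)\cap S_{n-2}^\vartriangle$, it would suffice to verify that this neighbourhood, read off from the two embeddings $L\cup S_{n-2}^\vartriangle\cong\bar\Gamma_{n-1}\cong F\cup S_{n-2}^\vartriangle$, together with the $(1,1)$-biregular matching between $L$ and $F$ of Proposition \ref{prova}(ii) and the already-fixed vertex $\sigma(0,2,3)\in V\cap L$, leaves $\phi$ no room to carry a vertex of $L$ into $F$. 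Making this separation precise is the technical heart of the proof.
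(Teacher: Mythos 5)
Your outline follows the paper's skeleton (reduction to $v=\sigma(0,2,n)$ by regularity of $\overline{\textsf{D}}_{n+1}$ on $V$, induction with computational base cases $n=5,6$, fixing $V$ pointwise, then descending to $S_{n-2}^\vartriangle$), but the step you flag as ``the technical heart'' is precisely where the proposal is incomplete, and it is a genuine gap: you never prove $\phi(L)=L$, and your whole endgame (applying the inductive hypothesis to $S_{n-1}=L\cupdot S_{n-2}^\vartriangle$ and to $S_{n-1}^\triangledown=F\cupdot S_{n-2}^\vartriangle$) depends on it. The paper's key idea, which your proposal is missing, makes this separation unnecessary: once the stabilizer $\bar{\textsf{H}}$ of $v$ is shown to fix $V$ pointwise, $\bar{\textsf{H}}$ is exactly the kernel of the permutation representation of $\bar{\textsf{R}}$ on $V$, hence a \emph{normal} subgroup of $\bar{\textsf{R}}$. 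Every $\sigma(0,j,k)\in L\cup B$ is torically equivalent to some $\tau\in S_{n-2}^\vartriangle$ by the explicit equations (\ref{eqa14oct}), say $\sigma(0,j,k)=\bar{\textsf{u}}(\tau)$ with $\bar{\textsf{u}}\in\bar{\textsf{F}}\leq\bar{\textsf{R}}$; for $\bar{\textsf{h}}\in\bar{\textsf{H}}$ normality gives $\bar{\textsf{h}}_1\in\bar{\textsf{H}}$ with $\bar{\textsf{h}}\circ\bar{\textsf{u}}=\bar{\textsf{u}}\circ\bar{\textsf{h}}_1$, whence $\bar{\textsf{h}}(\sigma(0,j,k))=\bar{\textsf{u}}(\bar{\textsf{h}}_1(\tau))=\bar{\textsf{u}}(\tau)=\sigma(0,j,k)$, since $\tau$ is already fixed by the inductive step on $S_{n-2}^\vartriangle$. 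Thus all of $L\cup B$ is fixed pointwise without ever deciding whether $\phi$ preserves $L$, and $F$ then follows from the $(1,1)$-matching of Proposition \ref{prova} (ii), as in your own last step. Without this conjugation trick, or an actual proof of the $L$/$F$ separation, your argument does not close.

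Two earlier steps also fail as written. First, $\phi(B)=B$ does not follow from Corollary \ref{cor3oct13ter} alone: that corollary singles out $B$ only among maximal $(n-1)$-cliques \emph{containing an edge of} $\bar{\Gamma}(B)$, a condition phrased in terms of $B$ itself and hence not automorphism-invariant; a priori $\phi(B)$ could be a maximal $(n-1)$-clique meeting $B$ only in the fixed vertex $v$, about which the corollary says nothing. This is why the paper first fixes $\sigma(1,2,n)$ via the $2$-clique $e_n$, then distinguishes the remaining two neighbours $\sigma(0,n-2,n)$ and $\sigma(0,2,3)$ of $v$ in $\bar{\Gamma}(V)$ by proving that every $(n-2)$-clique through $\{v,\sigma(0,2,3)\}$ is maximal while $\{v,\sigma(0,n-2,n)\}$ lies in the $(n-1)$-clique $B$; only after the edge $\{v,\sigma(0,n-2,n)\}$ of $\bar{\Gamma}(B)$ is fixed can Corollary \ref{cor3oct13ter} be invoked to get $\phi(B)=B$. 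Second, the clique-quotient of $\bar{\Gamma}(V)$ is not an $(n+1)$-cycle: each $e_m$ is adjacent to four cliques, $e_{m\pm1}$ and $e_{m\pm2}$ (indices mod $n+1$). For instance $e_n\sim e_{n-2}$ via the $B$-edge $\{\sigma(0,2,n),\sigma(0,n-2,n)\}$, and $e_n\sim e_1$ via $\{\sigma(1,2,n),\sigma(1,2,4)\}$ by Lemma \ref{lem2oct13} (i). So your appeal to ``an automorphism of a cycle fixing two vertices at distance $2$ is the identity'' rests on a false description of the quotient; a rigidity argument can still be run on the circulant with jumps $1$ and $2$, but it has to be supplied, and your (correct) propagation step along setwise-fixed $2$-cliques only starts once that is done.
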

\begin{proof} We prove the assertion by induction on $n$. Computation shows that the assertion is true for $n=5,6$. Therefore, we assume $n\geq 7$.

First we prove that any automorphism of $\bar{\Gamma}$ fixing a vertex $v\in V$ is an automorphism of $\bar{\Gamma}(V)$ as well. Since $\overline{\textsf{D}}_{n+1}$ is regular on $V$, we may
limit ourselves to take $\sigma(0,2,n)$ for $v$. Let $\bar{\textsf{H}}$ be the subgroup of $\bar{\textsf{R}}$ which fixes $\sigma(0,2,n)$.

We look inside the action of $\bar{\textsf{H}}$ on $\bar{\Gamma}(V)$ and show that $\bar{\textsf{H}}$ fixes the edge $\{\sigma(0,2,n),\sigma(0,n-2,n)\}$. By Proposition \ref{lemgoct12}, $\bar{\Gamma}(V)$ is $3$-regular. More precisely, the endpoints of the edges of $\bar{\Gamma}(V)$ which are incident with $\sigma(0,2,n)$ are $\sigma(0,2,3),\,\sigma(1,2,n),$ and $\sigma(0,n-2,n)$; see Lemma \ref{prop1oct12} (i) (iii) (v). Also, by Proposition \ref{thm2oct11}, the edge $e_{n-1}=\{\sigma(0,2,n),\sigma(1,2,n)\}$ is the edge of a maximal clique of $\bar{\Gamma}$ of size $2$, and no two distinct edges of maximal cliques of $\bar{\Gamma}$ of size $2$ have a common vertex. Thus, $\bar{\textsf{H}}$ fixes $\sigma(1,2,n)$. Now, from Corollary \ref{cor3oct13ter}, the edge $\{\sigma(0,2,n),\sigma(0,n-2,n)\}$ lies in a unique maximal clique of size $n-1$. By Lemma \ref{lem2oct13} (i), the edge $\{\sigma(0,2,n),\sigma(0,2,3)\}$ lies on a clique of size $n-2$ whose set of vertices is $\{\sigma(0,2,k)| 3\le k\le n\}$. Here, we prove that any clique $C$ of size $n-2$ containing the edge $\{\sigma(0,2,n),\sigma(0,2,3)\}$ is maximal. By the first assertion of Proposition \ref{prova} (i), $\sigma(0,2,3)$ is adjacent to a unique vertex in $B$, namely $\sigma(0,2,n)$. On the other hand, among the $2(n-2)$ neighbors of $\sigma(0,2,n)$ off $V$, only as many as $n-3$ vertices are off $B\cap V$, by the proof of Proposition \ref{thm1oct11}. Then, $C$ does not extend to a clique of size $n-1$. Therefore, $\bar{\textsf{H}}$ cannot interchange the edges $\{\sigma(0,2,n),\sigma(0,n-2,n)\}$ and $\{\sigma(0,2,n),\sigma(0,2,3)\}$ but fixes both.

Also, by Proposition \ref{lemgoct12} and Lemma \ref{prop1oct12} (i) (iii), $\sigma(0,n-2,n)$ is adjacent to $\sigma(0,n-2,n-1)$ and $\sigma(n-3,n-2,n).$ Since $e_{n-2}$ is the edge of a maximal clique of $\bar{\Gamma}$ of size $2$, $\bar{\textsf{H}}$ fixes $e_{n-2}=\{\sigma(0,n-2,n-1),\sigma(0,n-2,n)\}$. This together with what we have proven so far shows that $\bar{\textsf{H}}$ fixes $\sigma(n-3,n-2,n),$ and then the edges $e_{n-3}=\{\sigma(n-3,n-2,n),\sigma(n-3,n-1,n)\}$.

Now, as the edge $\{\sigma(0,2,n),\sigma(0,n-2,n)\}$ is in $\bar{\Gamma}(B)$, Corollary \ref{cor3oct13ter} implies that
$\bar{\textsf{H}}$ preserves $B$. And, as $\bar{\textsf{H}}$ fixes $\{\sigma(0,2,n),\sigma(0,2,3)\}$, $\bar{\textsf{H}}$  must fix $\sigma(0,2,n)\in B$ and $\sigma(0,2,3)\notin B$. Also, $e_0=\{\sigma(0,1,3),\sigma(0,2,3)\}$ is preserved by $\bar{\textsf{H}}$, as we have seen above. Therefore,  $\sigma(0,1,3)$ is also fixed by $\bar{\textsf{H}}$. Furthermore, $\sigma(2,3,5)\in S_{n-2}^\vartriangle$ is adjacent to $\sigma(0,2,3)$ in $\bar{\Gamma}(V)$, by Proposition \ref{lemgoct12} and Lemma \ref{prop1oct12} (ii); and then it is fixed by $\bar{\textsf{H}}$, as $\bar{\textsf{H}}$ preserves $S_{n-2}^\vartriangle,$ by Lemma \ref{lem1oct9}. Therefore, we have that $\bar{\textsf{H}}$ induces an automorphism group of $\bar{\Gamma}(S_{n-2}^\vartriangle)$ fixing a vertex $\sigma(2,3,5)\in S_{n-2}^\vartriangle$. Then $\bar{\textsf{H}}$ fixes every block transpositions in $S_{n-2}^\vartriangle\cong T_{n-2},$ by the inductive hypothesis. In particular, $\bar{\textsf{H}}$ fixes all the vertices in $V \cap S_{n-2}^\vartriangle,$ namely all vertices in $\Lambda$ belonging to $e_l$ with $0<l<n-3$.

This together with what proven so far shows that $\bar{\textsf{H}}$ fixes all vertices of $V$ with only two possible exceptions, namely the endpoints of the edge $e_{n-1}=\{\sigma(1,n-1,n),\sigma(0,1,n-1)\}$. In this exceptional case, $\bar{\textsf{H}}$ would swap $\sigma(0,1,n-1)$ and $\sigma(1,n-1,n).$ Actually, this exception cannot occur since $\sigma(0,1,n-1)$ and $\sigma(1,n-1,n)$ do not have a common neighbor, and $\bar{\textsf{H}}$ fixes their neighbors in $V$. Therefore, $\bar{\textsf{H}}$ fixes every vertex in $V$. Hence, $\bar{\textsf{H}}$ is the kernel of the permutation representation of $\bar{\textsf{R}}$ on $V$. Thus $\bar{\textsf{H}}$ is a normal subgroup of $\bar{\textsf{R}}$.

Our final step is to show that the block transpositions in $L\cup B$ are also fixed by $\bar{\textsf{H}}$. Take any block transposition $\sigma(0,j,k)$. Then the toric class of $\sigma(0,j,k)$ contains a block transposition $\sigma(i',j',k')$ from $S_{n-2}^\vartriangle$. This is a consequence of the equations below which are obtained by using (\ref{22march2015})
\begin{equation}\label{eqa14oct}
\begin{array}{llll}
{\bar{\textsf{f}}}^{\,2}(\sigma(0,j,k))&=&\sigma(j-2,k-2,n-1),& j\geq 3; \\
{\bar{\textsf{f}}}^{\,3}(\sigma(0,1,k))&=&\sigma(k-3,n-2,n-1),&k\geq 4;\\
{\bar{\textsf{f}}}^{\,4}(\sigma(0,1,2))&=&\sigma(n-3,n-2,n-1);&{}\\
{\bar{\textsf{f}}}^{\,5}(\sigma(0,1,3))&=&\sigma(n-4,n-3,n-1);&{}\\
{\bar{\textsf{f}}}^{\,4}(\sigma(0,2,k))&=&\sigma(k-4,n-3,n-1),& k\geq 5;\\
{\bar{\textsf{f}}}^{\,5}(\sigma(0,2,3))&=&\sigma(n-4,n-2,n-1);&{}\\
{\bar{\textsf{f}}}^{\,6}(\sigma(0,2,4))&=&\sigma(n-5,n-3,n-1).&{}\\
\end{array}
\end{equation}Since $\sigma(i',j',k')\in S_{n-2}^\vartriangle$ we know that $\bar{\textsf{H}}$ fixes $\sigma(i',j',k')$. From this we infer that $\bar{\textsf{H}}$ also fixes $\sigma(0,j,k)$. In fact, as $\sigma(0,j,k)$ and $\sigma(i',j',k')$ are torically equivalent, $\bar{\textsf{u}}(\sigma(i',j',k'))=\sigma(0,j,k)$ for some $\bar{\textsf{u}}\in\bar{\textsf{F}}$. Take any $\bar{\textsf{h}}\in\bar{\textsf{H}}$. As $\bar{\textsf{H}}$ is a normal subgroup of $\bar{\textsf{R}}$, there exists $\bar{\textsf{h}}_1\in \bar{\textsf{H}}$ such that ${\bar{\textsf{u}}}\circ\bar{\textsf{h}}_1=\bar{\textsf{h}}\circ \bar{\textsf{u}}$. Hence
$$\sigma(0,j,k)={\bar{\textsf{u}}}(\sigma(i',j',k'))=
{\bar{\textsf{u}}}\circ\bar{\textsf{h}}_1(\sigma(i',j',k'))=\bar{\textsf{h}}\circ \bar{\textsf{u}}(\sigma(i',j,',k'))$$
whence $\sigma(0,j,k)=\bar{\textsf{h}}(\sigma(0,j,k)).$ Therefore, $\bar{\textsf{H}}$ fixes every block transposition in $L\cup B.$ 

Also, this holds true for $F$, by the second assertion of Proposition \ref{prova}. Thus, by Lemma \ref{lem1oct9}, $\bar{\textsf{H}}$ fixes
every block transposition. This completes the proof.
\end{proof}
\begin{rem}
Lemma \ref{lemfoct12} yields Theorem \ref{teor2} for $n\geq 5$. For $n=4$, computations performed by using the package ``grape'' of GAP \cite{gap} show that Theorem \ref{teor2} is also true.
\end{rem}

\end{document}